\definecolor{myeditcolor}{named}{blue}
\titleformat{\section}[block]{\bfseries\large}{\thesection. }{2pt}{}
\theoremstyle{definition}
\newtheorem{theorem}{Theorem}[section]
\newtheorem{definition}[theorem]{Definition}
\newtheorem{remark}[theorem]{Remark}
\newtheorem{example}[theorem]{Example}
\begin{document}
\thispagestyle{empty}
\begin{center}
\noindent\textbf{{\Large Pseudo-Riemannian Algebraic Ricci Solitons on Four-Dimensional Lie Groups}}
\end{center}
\begin{center} \noindent Youssef Ayad\footnote{corresponding author: youssef.ayad@edu.umi.ac.ma}\\
\small{\textit{$^1$Faculty of sciences, Moulay Ismail University of Mekn\`{e}s}}\\
\small{\textit{B.P. 11201, Zitoune, Mekn\`{e}s}, Morocco}
\end{center}
\begin{abstract}
We investigate the conditions under which pseudo-Riemannian inner products induce pseudo-Riemannian algebraic Ricci solitons on four-dimensional Lie algebras. By analyzing the algebraic Ricci soliton equation for each four-dimensional Lie algebra, we obtain a complete description of when such pseudo-Riemannian algebraic Ricci solitons arise in dimension four. We present two applications of our formalism on a chosen four-dimensional Lie algebra by exhibiting a pseudo-Riemannian algebraic Ricci soliton and a flat pseudo-Riemannian inner product, which is a trivial algebraic Ricci soliton.
\end{abstract}
\begin{center}
\textbf{Keywords} Lie algebra, Lie group, Pseudo-Riemannian metric, Algebraic Ricci soliton.
\end{center}
\begin{center}
\textbf{Mathematics Subject Classification} 17B60, 17B40, 53C50.	
\end{center}
\section{Introduction}

The study of Ricci solitons plays a central role in modern differential geometry, serving as a natural generalization of Einstein metrics and as a solution to the Ricci flow equation \cite{batat2017algebraic,chow}. Moreover, a pseudo-Riemannian metric $g$ on a smooth manifold $M$ is called a \textit{Ricci soliton} if the following equation holds 
$$\operatorname{Ric} + \mathcal{L}_X g = \lambda g,$$
where $\mathrm{Ric}$ is the Ricci tensor of $g$, $\mathcal{L}_X g$ denotes the Lie derivative along a vector field $X$, and $\lambda \in \mathbb{R}$. Ricci solitons provide a natural framework for understanding the formation of singularities in the Ricci flow and play a pivotal role in the classification of pseudo-Riemannian manifolds with special curvature properties.  

While classical studies often focus on \textit{Riemannian manifolds}, where the metric is positive-definite, the theory extends naturally to \textit{pseudo-Riemannian manifolds}, which allow metrics of indefinite signature. This generalization encompasses Lorentzian geometry, which is fundamental in general relativity, and introduces richer curvature phenomena, leading to new challenges and opportunities in geometric analysis. Einstein Lorentzian metrics and Lorentzian Ricci soliton on nilpotent Lie groups are studied in the following works \cite{boucetta2020einstein,wears2017lorentzian,bakhshandeh2022lorentz}. However, the following work \cite{calvarusolorentzian} describes Lorentzian four-dimensional Lie groups, including a complete classification of the Einstein and Ricci parallel examples. In contrast, the following work \cite{calvarusopseudo} investigates pseudo-Riemannian Lie groups, including a procedure to classify such pseudo-Riemannian Lie groups and a classification of the Einstein examples.

A particularly tractable and fruitful class of Ricci solitons arises in the context of \textit{Lie groups endowed with left-invariant pseudo-Riemannian metrics}. Moreover, for a Lie group $G$ with Lie algebra $\mathfrak{g}$, the Ricci soliton is called an \emph{algebraic Ricci soliton}, if its Ricci operator satisfies 
$$\operatorname{Ric} = \eta \, \mathrm{Id} + D,$$
where $\mathrm{Ric}$ is viewed as a linear operator on $\mathfrak{g}$, $\eta \in \mathbb{R}$, and $D$ is a derivation of the Lie algebra $\mathfrak{g}$. These \textit{algebraic Ricci solitons} not only generalize Einstein metrics but also provide a bridge between differential geometry and the algebraic structure of Lie groups, allowing the classification and construction of solitons via linear algebraic methods rather than through the direct analysis of nonlinear partial differential equations.  

Pseudo-Riemannian algebraic Ricci solitons thus occupy a unique intersection of geometry, algebra, and mathematical physics. They extend classical Riemannian results to indefinite metrics, offer insight into the geometric flow of left-invariant structures, and appear naturally in the study of self-similar Lorentzian spacetimes. Consequently, their systematic study enhances our understanding of both the geometric and algebraic properties of manifolds with indefinite metrics, while providing potential applications in theoretical physics. In \cite{calvarusofino}, the authors completely classify four-dimensional homogeneous pseudo-Riemannian manifolds with non-trivial isotropy that admit non-trivial homogeneous Ricci solitons, and exhibit new examples that are neither solvmanifolds nor symmetric.

The concept of an algebraic Ricci soliton was first introduced by Lauret in the Riemannian setting \cite{lauret2001ricci} and later extended to the pseudo-Riemannian case in \cite{batat2017algebraic}. Note that in the general pseudo-Riemannian context, there exist Ricci solitons on Lie groups that are not algebraic \cite{wears2017lorentzian}. The classification of Lorentzian algebraic Ricci solitons on three-dimensional Lie groups is given in \cite{batat2017algebraic}. The problem of classifying Lorentzian algebraic Ricci solitons on four-dimensional nilpotent Lie groups is addressed in \cite{aitbenhaddou2025lorentzian}. The latter work builds on \cite{bokan2015lorentz}, in which the authors classified Lorentzian metrics on four-dimensional nilpotent Lie groups.

In this paper, we investigate the conditions under which pseudo-Riemannian inner products induce pseudo-Riemannian algebraic Ricci solitons on four-dimensional Lie algebras. By analyzing the algebraic Ricci soliton equation for each four-dimensional Lie algebra, we obtain a complete description of when such pseudo-Riemannian algebraic Ricci solitons arise in dimension four. We present two applications of our formalism on a chosen four-dimensional Lie algebra by exhibiting a pseudo-Riemannian algebraic Ricci soliton and a flat pseudo-Riemannian inner product, which is a trivial algebraic Ricci soliton.

\section{Preliminaries}
We consider $G$ to be a connected Lie group with Lie algebra $\mathfrak{g}$ of dimension $n$. The identity element of $G$ will be denoted by $e$.
\begin{definition}
A \emph{pseudo-Riemannian metric} $g$ on $G$ is a mapping that smoothly assigns to each point $p \in G$, a nondegenerate symmetric bilinear form $g_p$ of signature $(r, s)$ where $n = r + s$ on the tangent space $T_pG$; that is, in some basis of $T_pG$, the matrix of $g_p$ has exactly $r$ positive and $s$ negative eigenvalues.
\end{definition}
One can find a basis $\left\lbrace e_1, \ldots, e_n\right\rbrace$ of $T_pG$ such that the matrix of $g_p$ is given in this basis by:
$$\left[ g_p(e_i, e_j)\right] = J = \operatorname{diag}\left\lbrace \underbrace{1, \ldots, 1}_{\text{$r$ times}}, \underbrace{-1, \ldots, -1}_{\text{$s$ times}}\right\rbrace.$$
\begin{definition}
A \emph{pseudo-Riemannian inner product} on the Lie algebra $\mathfrak{g}$ is a nondegenerate, symmetric bilinear form of signature $(r,s)$.
\end{definition}
Let $p \in G$, the left translation by $p^{-1}$ on $G$ is the diffeomorphism of $G$ given by:
$$\ell_{p^{-1}} : G \longrightarrow G, \quad x \longmapsto p^{-1}x.$$
The differential of $\ell_{p^{-1}}$ at $p$ is the mapping $d_p \ell_{p^{-1}}: T_pG \longrightarrow T_eG \simeq \mathfrak{g}$.
\begin{definition}
A \emph{pseudo-Riemannian metric} $g$ on $G$ is said to be left invariant if the left translations $\ell_{p}$, $p \in G$ are isometries with respect to $g$. This means that
$$g_x\left(u, v\right) = g_{px}\left(d_x \ell_p(u), d_x \ell_p(v)\right) \quad \forall x \in G, \quad \forall u, v \in T_xG.$$
\end{definition}
\begin{remark}
If $g$ is a pseudo-Riemannian metric on $G$, then its value at $e$, i.e. $g_e$ is a pseudo-Riemannian inner product on $\mathfrak{g}$. Conversely, if $\langle \cdot, \cdot\rangle$ is a pseudo-Riemannian inner product on $\mathfrak{g}$, put
$$g_p(u, v) = \langle d_p \ell_{p^{-1}}(u), d_p \ell_{p^{-1}}(v)\rangle \quad \forall u, v \in T_pG.$$
Then $g$ is a left invariant pseudo-Riemannian metric on $G$.
\end{remark}
The following is a very important fact:
\begin{remark}
The geometric structures associated with the pseudo-Riemannian Lie group $(G, g)$, such as the Levi-Civita connection and the Ricci operator, can be recovered at the level of the pseudo-Riemannian Lie algebra $(\mathfrak{g}, \langle \cdot, \cdot \rangle)$. Henceforth, we will work exclusively with the pseudo-Riemannian Lie algebra $(\mathfrak{g}, \langle \cdot, \cdot \rangle)$.
\end{remark}
\subsection{The Ricci operator of a pseudo-Riemannian Lie algebra}
Let $(G, g)$ be a pseudo-Riemannian Lie group and let $\left(\mathfrak{g}, \langle \cdot, \cdot\rangle\right)$ be its associated pseudo-Riemannian Lie algebra. Consider $\nabla$ to be the Levi-Civita connection associated to $\left(\mathfrak{g}, \langle \cdot, \cdot\rangle\right)$; It is well known that $\nabla$ is characterized by the following Koszul formula
$$2\langle \nabla_u v, w\rangle = \langle [u, v], w\rangle + \langle [w, u], v\rangle + \langle [w, v], u\rangle, \quad \forall u, v, w \in \mathfrak{g}.$$
The Riemann curvature tensor $R$ of $\left(\mathfrak{g}, \langle \cdot, \cdot\rangle\right)$ associates to each pair $u, v \in \mathfrak{g}$ the linear transformation
$$R_{uv} = \nabla_{[u, v]} - [\nabla_u, \nabla_v].$$
\begin{definition}
\begin{enumerate}
\item A pseudo-Riemannian Lie algebra is called \emph{flat} if its Riemann curvature tensor is identically zero.
\item The \emph{Ricci curvature} $\operatorname{ric}$ is the symmetric tensor that associates to each pair of vectors $u, v \in \mathfrak{g}$, the trace of the endomorphism $w \longmapsto R_{uw}v$.
\end{enumerate}
\end{definition}
\begin{remark}
From the definition of the Ricci curvature $\operatorname{ric}$, it is clear that the Riemann curvature tensor is completely determined by the Ricci curvature $\operatorname{ric}$.
\end{remark}
\begin{definition}
The \emph{Ricci operator} $\operatorname{Ric}$ is the endomorphism of $\mathfrak{g}$ given by the relation $$\langle \operatorname{Ric}(u), v\rangle = \operatorname{ric}(u, v).$$
\end{definition}
\begin{remark}
Since the Ricci curvature $\operatorname{ric}$ is symmetric, we see that
$$\langle \operatorname{Ric}(u), v\rangle = \operatorname{ric}(u, v) = \operatorname{ric}(v, u) = \langle \operatorname{Ric}(v), u\rangle = \langle u, \operatorname{Ric}(v)\rangle.$$
Hence, the Ricci operator $\operatorname{Ric}$ is self-adjoint with respect to the inner product $\langle \cdot, \cdot\rangle$. Let $\mathrm{B} = \left\lbrace e_1, \dots, e_n \right\rbrace$ be a basis of $\mathfrak{g}$, and denote by $R$ and $G$ the matrices of the Ricci operator and the pseudo-Riemannian inner product in the basis $\mathrm{B}$, respectively. Then the self-adjointness of $\operatorname{Ric}$ with respect to the inner product is equivalent to $R^T G = G R$.
\end{remark}
\subsection{Procedure to compute the Ricci operator of a\\ pseudo-Riemannian Lie algebra}
Let $\mathscr{B} = \left\lbrace v_1, \ldots, v_n\right\rbrace$ be an orthonormal basis of $\left(\mathfrak{g}, \langle \cdot, \cdot\rangle\right)$, that is, it satisfies 
$$\langle v_i, v_j \rangle = 0 \quad \forall i \neq j, \quad \langle v_i, v_i \rangle = 1 \quad \forall i = 1, \ldots, r, \quad \langle v_i, v_i \rangle = -1 \quad \forall i = r + 1, \ldots, s.$$
The structure constants $\xi_{ijk}$ of $\left(\mathfrak{g}, \langle ., .\rangle\right)$ are defined by $\xi_{ijk} = \langle [v_i, v_j], v_k\rangle$. By the Koszul formula, the Levi-Civita connection of $\left(G, g\right)$ is described by (see \cite{ha2023left} for the particular Lorentzian case):
\begin{equation} \label{Levi}
\nabla_{v_i}v_j = \displaystyle{\sum_{k = 1}^{n} \langle v_k, v_k\rangle \frac{1}{2}\left(\xi_{ijk} - \xi_{jki} + \xi_{kij}\right)v_k}.
\end{equation}
Note that knowing the structure constants and the Levi-Civita connection of $\left(\mathfrak{g}, \langle \cdot, \cdot\rangle\right)$ allows us to find the Ricci operator $\operatorname{Ric}$ of $\left(\mathfrak{g}, \langle ., .\rangle\right)$. Furtheremore, the Ricci operator of $\left(\mathfrak{g}, \langle \cdot, \cdot\rangle\right)$ is given by:
\begin{equation} \label{Ricci}
\operatorname{Ric}(u) = \displaystyle{\sum_{i = 1}^{n} \langle v_i, v_i\rangle R_{v_iu}v_i}.
\end{equation}
For example, if the dimension of $\mathfrak{g}$ is $3$ and the inner product $\langle \cdot, \cdot\rangle$ is Lorentzian, then the matrix $(R_{ij})$ of $\operatorname{Ric}$ in the orthonormal basis $\left\lbrace v_1, v_2, v_3\right\rbrace$ takes the form (see \cite{ha2023left}):
$$\operatorname{Ric} = \begin{bmatrix}
R_{11} & R_{12} & R_{13}\\
R_{12} & R_{22} & R_{23}\\
-R_{13} & -R_{23} & R_{33}
\end{bmatrix}.$$
This follows from the self-adjointness property $R^TG = GR$, where
$$G = J_{2, 1} = \operatorname{diag}\left\lbrace 1, 1, -1\right\rbrace$$
represents the Lorentzian inner product $\langle \cdot, \cdot\rangle$ in the orthonormal basis $\left\lbrace v_1, v_2, v_3\right\rbrace$.\\
If the dimension of $\mathfrak{g}$ is $4$ and the inner product $\langle \cdot, \cdot\rangle$ is Lorentzian, then the matrix $(R_{ij})$ of $\operatorname{Ric}$ in the orthonormal basis $\left\lbrace v_1, v_2, v_3, v_4\right\rbrace$ takes the form:
$$\operatorname{Ric} = \begin{bmatrix}
R_{11} & R_{12} & R_{13} & R_{14}\\
\\
R_{12} & R_{22} & R_{23} & R_{24}\\
\\
R_{13} & R_{23} & R_{33} & R_{34}\\
\\
-R_{14} & -R_{24} & -R_{34} & R_{44}
\end{bmatrix}.$$
This follows again from the self-adjointness property $R^TG = GR$, where
$$G = J_{3, 1} = \operatorname{diag}\left\lbrace 1, 1, 1, -1\right\rbrace$$
represents the Lorentzian inner product $\langle \cdot, \cdot\rangle$ in the orthonormal basis $\left\lbrace v_1, v_2, v_3, v_4\right\rbrace$.\\
But, if the inner product $\langle \cdot, \cdot\rangle$ has signature $(2, 2)$, then the matrix $(R_{ij})$ of $\operatorname{Ric}$ in the orthonormal basis $\left\lbrace v_1, v_2, v_3, v_4\right\rbrace$ takes the form:
$$\operatorname{Ric} = \begin{bmatrix}
R_{11} & R_{12} & R_{13} & R_{14}\\
\\
R_{12} & R_{22} & R_{23} & R_{24}\\
\\
-R_{13} & -R_{23} & R_{33} & R_{34}\\
\\
-R_{14} & -R_{24} & R_{34} & R_{44}
\end{bmatrix}.$$
This follows similarly from the self-adjointness property $R^TG = GR$, where
$$G = J_{2, 2} = \operatorname{diag}\left\lbrace 1, 1, -1, -1\right\rbrace$$
represents the pseudo-Riemannian inner product $\langle \cdot, \cdot\rangle$ in the orthonormal basis $\left\lbrace v_1, v_2, v_3, v_4\right\rbrace$.

Let $\mathcal{B} = \left\lbrace x_1, \ldots, x_n\right\rbrace$ be another, not necessarily orthonormal, basis for $\mathfrak{g}$. Setting $x_j = \displaystyle{\sum_{i} a_{ij}v_i}$ for some $a_{ij}$, a change of basis for the linear operator gives
$$\left[ \operatorname{Ric}\right]_{\left\lbrace x_i\right\rbrace} = A^{-1} \left[ \operatorname{Ric}\right]_{\left\lbrace v_i\right\rbrace} A,$$
where $\left[ \operatorname{Ric}\right]_{\left\lbrace x_i\right\rbrace}$ (respectively $\left[ \operatorname{Ric}\right]_{\left\lbrace v_i\right\rbrace}$) is the matrix of the Ricci operator $\operatorname{Ric}$ relative to the basis $\mathcal{B}$ (respectively $\mathscr{B}$) and $A = (a_{ij})$ is the transition matrix $\left[ \operatorname{id}\right]_{\left\lbrace x_i\right\rbrace, \left\lbrace v_i\right\rbrace}$.\vspace{0.5cm}

\textbf{The following observation is of central importance:}
\begin{remark}
In an arbitrary basis $\mathcal{B}$ of $(\mathfrak{g},\langle \cdot, \cdot \rangle)$, the matrix of the Ricci operator can be considered as an arbitrary $n \times n$ matrix $R = (R_{ij})$. Any symmetry or block structure of the matrix appears only when $\mathcal{B}$ is orthonormal with respect to $\langle \cdot, \cdot \rangle$. This allows us to work with the entries $R_{ij}$ freely in computations.
\end{remark}
\begin{definition}
A \emph{derivation} of $\mathfrak{g}$ is a linear map $D: \mathfrak{g} \longrightarrow \mathfrak{g}$ such that for all $x, y \in \mathfrak{g}$, the Leibniz rule holds:
$$D[x, y] = [D(x), y] + [x, D(y)].$$
\end{definition}
\begin{definition}
A pseudo-Riemannian inner product on $\mathfrak{g}$ is called an \emph{algebraic Ricci soliton} if its Ricci operator $\operatorname{Ric}$ satisfies the following equality $\operatorname{Ric} = \eta \mathit{Id}_{\mathfrak{g}} + D$ where $\eta$ is a real number and $D$ is a derivation of $\mathfrak{g}$.
\end{definition}
\begin{remark}
By saying that a pseudo-Riemannian inner product on the Lie algebra $\mathfrak{g}$ is an algebraic Ricci soliton, we mean that the left-invariant pseudo-Riemannian metric on the associated Lie group $G$, induced by this inner product, is an algebraic Ricci soliton.
\end{remark}
In this paper, we will focus on the case where the dimension of $\mathfrak{g}$ is $4$. We take $\left(\mathfrak{g}, \langle \cdot, \cdot\rangle\right)$ to be a four-dimensional pseudo-Riemannian Lie algebra with basis $\mathcal{B} = \left\lbrace e_1, e_2, e_3, e_4\right\rbrace$. The Ricci operator of the pseudo-Riemannian Lie algebra $\left(\mathfrak{g}, \langle \cdot, \cdot\rangle\right)$ is an arbitrary $4 \times 4$ matrix of the following form
$$\operatorname{Ric} = \begin{bmatrix}
R_{11} & R_{12} & R_{13} & R_{14}\\
\\
R_{21} & R_{22} & R_{23} & R_{24}\\
\\
R_{31} & R_{32} & R_{33} & R_{34}\\
\\
R_{41} & R_{42} & R_{43} & R_{44}
\end{bmatrix}.$$
Assume that the pseudo-Riemannian inner product $\langle \cdot, \cdot\rangle$ is an algebraic Ricci soliton, then there exists a constant $\eta$ and a derivation $D$ of $\mathfrak{g}$ such that the Ricci operator $\operatorname{Ric}$ of $\left(\mathfrak{g}, \langle \cdot, \cdot\rangle\right)$ satisfies $\operatorname{Ric} = \eta I_4 + D$. Then the derivation $D$ is given in the basis $\mathcal{B}$ by
\begin{equation} \label{derivation}
D = \begin{bmatrix}
R_{11} - \eta & R_{12} & R_{13} & R_{14}\\
\\
R_{21} & R_{22} - \eta & R_{23} & R_{24}\\
\\
R_{31} & R_{32} & R_{33} - \eta & R_{34}\\
\\
R_{41} & R_{42} & R_{43} & R_{44} - \eta
\end{bmatrix}.
\end{equation}
According to the bracket structure of each four-dimensional Lie algebra, we will study the conditions under which pseudo-Riemannian inner products on four-dimensional Lie algebras can give rise to algebraic Ricci solitons. Below are two tables that list the decomposable and indecomposable four-dimensional Lie algebras (see \cite{biggs2016classification}, pp. 1027 and 1028).
\begin{center}
\textbf{Table 1: Decomposable four-dimensional Lie algebras} \vspace{2mm}
	
\begin{tabular}{|p{4cm}|p{11cm}|} \hline
Lie algebra  & Nonzero commutators  \\ \hline
$ 4\mathfrak{g}_1 $ &  \\ \hline
$ \mathfrak{g}_{2.1} \oplus 2\mathfrak{g}_1 $ & $[e_1, e_2] = e_1$ \\ \hline
$ 2\mathfrak{g}_{2.1} $ &  $[e_1, e_2] = e_1, \quad [e_3, e_4] = e_3$ \\ \hline
$ \mathfrak{g}_{3.1} \oplus \mathfrak{g}_1 $ & $[e_2, e_3] = e_1$ \\ \hline
$ \mathfrak{g}_{3.2} \oplus \mathfrak{g}_1 $ & $[e_2, e_3] = e_1 - e_2, \quad [e_3, e_1] = e_1$ \\ \hline
$ \mathfrak{g}_{3.3} \oplus \mathfrak{g}_1 $ & $[e_2, e_3] = -e_2, \quad [e_3, e_1] = e_1$ \\ \hline
$ \mathfrak{g}_{3.4}^0 \oplus \mathfrak{g}_1 $ & $[e_2, e_3] = e_1, \quad [e_3, e_1] = -e_2$ \\ \hline
$ \mathfrak{g}_{3.4}^{\alpha} \oplus \mathfrak{g}_1, \alpha > 0, \alpha \neq 1 $ & $[e_2, e_3] = e_1 - \alpha e_2, \quad [e_3, e_1] = \alpha e_1 - e_2$ \\ \hline
$ \mathfrak{g}_{3.5}^0 \oplus \mathfrak{g}_1 $ & $[e_2, e_3] = e_1, \quad [e_3, e_1] = e_2$ \\ \hline
$ \mathfrak{g}_{3.5}^{\alpha} \oplus \mathfrak{g}_1, \alpha > 0 $ & $[e_2, e_3] = e_1 - \alpha e_2, \quad [e_3, e_1] = \alpha e_1 + e_2$ \\ \hline
$ \mathfrak{g}_{3.6} \oplus \mathfrak{g}_1 $ & $[e_2, e_3] = e_1, \quad [e_3, e_1] = e_2, \quad [e_1, e_2] = -e_3$ \\ \hline
$ \mathfrak{g}_{3.7} \oplus \mathfrak{g}_1 $ & $[e_2, e_3] = e_1, \quad [e_3, e_1] = e_2, \quad [e_1, e_2] = e_3$ \\ \hline
\end{tabular}
\end{center}
\begin{center}
\textbf{Table 2: Indecomposable four-dimensional Lie algebras} \vspace{2mm}
	
\begin{tabular}{|p{4cm}|p{11cm}|} \hline
Lie algebra  & Nonzero commutators  \\ \hline
$ \mathfrak{g}_{4.1} $ & $ [e_2, e_4] = e_1, \quad [e_3, e_4] = e_2 $ \\ \hline
$ \mathfrak{g}_{4.2}^{\alpha}, \alpha \neq 0 $ & $[e_1, e_4] = \alpha e_1, \quad [e_2, e_4] = e_2, \quad [e_3, e_4] = e_2 + e_3$ \\ \hline
$ \mathfrak{g}_{4.3} $ &  $[e_1, e_4] = e_1, \quad [e_3, e_4] = e_2$ \\ \hline
$ \mathfrak{g}_{4.4} $ & $[e_1, e_4] = e_1, \quad [e_2, e_4] = e_1 + e_2, \quad [e_3, e_4] = e_2 + e_3$ \\ \hline
$ \mathfrak{g}_{4.5}^{\alpha, \beta} $ & $[e_1, e_4] = e_1, \quad [e_2, e_4] = \beta e_2, \quad [e_3, e_4] = \alpha e_3$ \\ \hline
$ \mathfrak{g}_{4.6}^{\alpha, \beta}, \alpha > 0, \beta \in \mathbb{R} $ & $[e_1, e_4] = \alpha e_1, \quad [e_2, e_4] = \beta e_2 - e_3, \quad [e_3, e_4] = e_2 + \beta e_3$ \\ \hline
$ \mathfrak{g}_{4.7} $ & $[e_1, e_4] = 2e_1, \quad [e_2, e_4] = e_2, \quad [e_3, e_4] = e_2 + e_3, \quad [e_2, e_3] = e_1$ \\ \hline
$ \mathfrak{g}_{4.8}^{-1} $ & $[e_2, e_3] = e_1, \quad [e_2, e_4] = e_2, \quad [e_3, e_4] = -e_3$ \\ \hline
$ \mathfrak{g}_{4.8}^{\alpha}, -1 < \alpha \leq 1 $ & $[e_1, e_4] = (1 + \alpha)e_1, \quad [e_2, e_4] = e_2, \quad [e_3, e_4] = \alpha e_3, \newline [e_2, e_3] = e_1$ \\ \hline
$ \mathfrak{g}_{4.9}^0 $ & $[e_2, e_3] = e_1, \quad [e_2, e_4] = -e_3, \quad [e_3, e_4] = e_2$ \\ \hline
$ \mathfrak{g}_{4.9}^{\alpha}, \alpha > 0 $ & $[e_1, e_4] = 2\alpha e_1, \quad [e_2, e_4] = \alpha e_2 - e_3, \newline [e_3, e_4] = e_2 + \alpha e_3, \quad [e_2, e_3] = e_1 $ \\ \hline
$ \mathfrak{g}_{4.10} $ & $[e_1, e_3] = e_1, \quad [e_2, e_3] = e_2, \quad [e_1, e_4] = -e_2, \quad [e_2, e_4] = e_1$ \\ \hline
\end{tabular}
\end{center}
Note that for the Lie algebra $\mathfrak{g}_{4.5}^{\alpha, \beta}$, the conditions over $\alpha$ and $\beta$ are: $-1 < \alpha \leq \beta \leq 1$, $\alpha\beta \neq 0$ or $\alpha = -1$, $0 < \beta \leq 1$.
\section{Pseudo-Riemannian algebraic Ricci solitons on decomposable four-dimensional Lie algebras}
\subsection{The Lie algebra $4\mathfrak{g}_1$}
Since $4\mathfrak{g}_1$ is abelian, every pseudo-Riemannian inner product on it is flat.
\subsection{The Lie algebra $\mathfrak{g}_{2.1} \oplus 2\mathfrak{g}_1$}
The Lie algebra $\mathfrak{g}_{2.1} \oplus 2\mathfrak{g}_1$ has a basis $\mathcal{B} = \left\lbrace e_1, e_2, e_3, e_4\right\rbrace$ such that the only nonzero Lie bracket is $[e_1, e_2] = e_1$.
\begin{theorem}
A pseudo-Riemannian inner product on $\mathfrak{g}_{2.1} \oplus 2\mathfrak{g}_1$ is an algebraic Ricci soliton if its Ricci operator satisfies
$$R_{21} = R_{31} = R_{41} = R_{13} = R_{14} = R_{23} = R_{24} = 0.$$
In this situation, the Ricci operator can be written as $\operatorname{Ric} = \eta I_4 + D$ where $\eta = R_{22}$ and
$$D = \begin{bmatrix}
R_{11} - R_{22} & R_{12} & 0 & 0\\
\\
0 & 0 & 0 & 0\\
\\
0 & R_{32} & R_{33} - R_{22} & R_{34}\\
\\
0 & R_{42} & R_{43} & R_{44} - R_{22}
\end{bmatrix}$$
is a derivation of $\mathfrak{g}_{2.1} \oplus 2\mathfrak{g}_1$ with respect to the basis $\mathcal{B}$.
\end{theorem}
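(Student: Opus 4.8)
The plan is to reduce the algebraic Ricci soliton condition to a purely linear-algebraic matching problem. By the central remark, in the basis $\mathcal{B}$ the Ricci operator may be treated as an arbitrary matrix $(R_{ij})$, and the soliton equation $\operatorname{Ric} = \eta I_4 + D$ is equivalent to requiring that $D := \operatorname{Ric} - \eta I_4$ be a derivation of $\mathfrak{g}_{2.1} \oplus 2\mathfrak{g}_1$. Thus the whole statement rests on first computing the general form of a derivation of this Lie algebra.

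First I would determine all derivations. Writing $D = (d_{ij})$ with $D e_j = \sum_i d_{ij} e_i$, a linear map is a derivation precisely when the Leibniz rule $D[e_i, e_j] = [D e_i, e_j] + [e_i, D e_j]$ holds on each of the six basis pairs $(e_i, e_j)$ with $i < j$. Since the only nonzero bracket is $[e_1, e_2] = e_1$, most left-hand sides vanish, but the right-hand sides need not, because $D$ is not assumed to preserve the zero brackets. The pair $(e_1, e_2)$ is the decisive one: comparing coefficients in $D e_1 = [D e_1, e_2] + [e_1, D e_2]$ forces $d_{22} = 0$ together with $d_{21} = d_{31} = d_{41} = 0$. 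The four remaining nontrivial pairs $(e_1, e_3)$, $(e_1, e_4)$, $(e_2, e_3)$, $(e_2, e_4)$ each contribute a single relation, killing $d_{23}$, $d_{24}$, $d_{13}$, $d_{14}$ respectively, while $(e_3, e_4)$ imposes no condition. This yields the general derivation: the entries $d_{11}, d_{12}, d_{32}, d_{33}, d_{34}, d_{42}, d_{43}, d_{44}$ are free and the eight entries above vanish.

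With the derivation form in hand, I would impose $D = \operatorname{Ric} - \eta I_4$ and match entries. The diagonal constraint $d_{22} = 0$ reads $R_{22} - \eta = 0$, which pins down $\eta = R_{22}$. The seven vanishing off-diagonal entries of $D$ translate directly into $R_{21} = R_{31} = R_{41} = R_{13} = R_{14} = R_{23} = R_{24} = 0$. Substituting $\eta = R_{22}$ into the surviving free entries, namely $d_{11} = R_{11} - \eta$, $d_{33} = R_{33} - \eta$, $d_{44} = R_{44} - \eta$, $d_{12} = R_{12}$, $d_{32} = R_{32}$, $d_{34} = R_{34}$, $d_{42} = R_{42}$, $d_{43} = R_{43}$, reproduces exactly the displayed matrix $D$, which is a derivation by construction. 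This establishes the asserted sufficiency, and in fact the converse as well.

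I do not expect a serious obstacle; the content is elementary once the bookkeeping is organized. The one place to be careful is the Leibniz rule on the pairs with vanishing bracket: it is tempting to conclude that these impose no condition, but in fact the nonzero term $[e_1, D e_j]$ coming from the single structural relation $[e_1, e_2] = e_1$ is exactly what forces $d_{13}, d_{14}, d_{23}, d_{24}$ to vanish. Keeping track of which column of $D$ feeds into the surviving bracket is the only subtlety.
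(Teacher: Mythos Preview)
Your proposal is correct and follows essentially the same approach as the paper: both arguments impose the Leibniz rule on each basis pair and read off the resulting constraints. The only difference is organizational---you first determine the general derivation $(d_{ij})$ and then substitute $d_{ij}=R_{ij}-\eta\delta_{ij}$, whereas the paper works directly with $D=\operatorname{Ric}-\eta I_4$ from the start; the computations and the conditions obtained are identical.
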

\begin{proof}
Since $D$ in (\ref{derivation}) is a derivation of $\mathfrak{g}_{2.1} \oplus 2\mathfrak{g}_1$, it satisfies the following condition
\begin{eqnarray*}
& [D(e_1), e_2] + [e_1, D(e_2)] = D[e_1, e_2] \hspace{6.2cm}\\ \Leftrightarrow & \left[ \left( R_{11} - \eta\right)e_1 + R_{21}e_2 + R_{31}e_3 - R_{41}e_4, e_2\right] + \hspace{4.5cm}\\ & \left[ e_1, R_{12}e_1 + \left(R_{22} - \eta\right)e_2 + R_{32}e_3 + R_{42}e_4\right] = D(e_1) \hspace{3.4cm}\\
\Leftrightarrow& \left( R_{11} - \eta\right)e_1 + \left( R_{22} - \eta\right)e_1 = \left( R_{11} - \eta\right)e_1 + R_{21}e_2 + R_{31}e_3 + R_{41}e_4\hspace{0.4cm}
\end{eqnarray*}
By identifying the components of $e_i$, we obtain that
$$\left\lbrace\begin{array}{lll}
R_{11} - \eta + R_{22} - \eta = R_{11} - \eta \Longrightarrow \eta = R_{22} \\
\\
R_{21} = R_{31} = R_{41} = 0 
\end{array}\right.$$
By using the result obtained above and by moving to the bracket $[e_1, e_3] = 0$, we obtain that
\begin{eqnarray*}
[D(e_1), e_3] + [e_1, D(e_3)] = 0 &\Leftrightarrow& [e_1, R_{13}e_1 + R_{23}e_2 + (R_{33} - \eta)e_3 + R_{43}e_4] = 0\\
&\Leftrightarrow& R_{23}e_1 = 0 \Leftrightarrow R_{23} = 0
\end{eqnarray*}
Similarly, we can easily see that
$$[D(e_1), e_4] + [e_1, D(e_4)] = 0 \Leftrightarrow R_{24} = 0$$
$$[D(e_2), e_3] + [e_2, D(e_3)] = 0 \Leftrightarrow R_{13} = 0$$
$$[D(e_2), e_4] + [e_2, D(e_4)] = 0 \Leftrightarrow R_{14} = 0$$
But the condition $[D(e_3), e_4] + [e_3, D(e_4)] = 0$ is trivial. Thus, we have obtained the conclusions of the theorem.
\end{proof}
\subsection{The Lie algebra $2\mathfrak{g}_{2.1}$}
The Lie algebra $2\mathfrak{g}_{2.1}$ has a basis $\mathcal{B} = \left\lbrace e_1, e_2, e_3, e_4\right\rbrace$ such that the nonzero Lie brackets are 
$$[e_1, e_2] = e_1, \qquad [e_3, e_4] = e_3$$
\begin{theorem}
A pseudo-Riemannian inner product on $2\mathfrak{g}_{2.1}$ is an algebraic Ricci soliton if its Ricci operator fulfills the following relations
\begin{multicols}{2}
\begin{itemize}
\item $R_{i1} = 0, \; i = 2, 3, 4$
\item $R_{32} = R_{42} = R_{14} = R_{24} = 0$
\item $R_{i3} = 0, \; i = 1, 2, 4$
\item $R_{22} = R_{44}$
\end{itemize}
\end{multicols}
With these relations in place, the Ricci operator may be written as $\operatorname{Ric} = \eta I_4 + D$ where $\eta = R_{22}$ and
$$D = \begin{bmatrix}
R_{11} - R_{22} & R_{12} & 0 & 0\\
\\
0 & 0 & 0 & 0\\
\\
0 & 0 & R_{33} - R_{22} & R_{34}\\
\\
0 & 0 & 0 & 0
\end{bmatrix}.$$
is a derivation of $2\mathfrak{g}_{2.1}$ with respect to the basis $\mathcal{B}$.
\end{theorem}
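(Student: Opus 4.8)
The plan is to prove the stated implication directly: assuming the Ricci operator obeys the listed relations, I will produce the scalar $\eta$ and the endomorphism $D$ and verify that $D$ is genuinely a derivation, so that the defining identity $\operatorname{Ric}=\eta\,I_4+D$ of an algebraic Ricci soliton is satisfied. First I would substitute the hypotheses $R_{21}=R_{31}=R_{41}=0$, $R_{32}=R_{42}=R_{14}=R_{24}=0$, $R_{13}=R_{23}=R_{43}=0$ and $R_{22}=R_{44}$ into the general $4\times 4$ form of $\operatorname{Ric}$, which collapses it to the block-triangular matrix whose only nonzero off-diagonal entries are $R_{12}$ and $R_{34}$. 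Setting $\eta=R_{22}=R_{44}$ and $D=\operatorname{Ric}-\eta\,I_4$ then reproduces exactly the matrix displayed in the statement; in particular the relation $R_{22}=R_{44}$ is what forces the $(4,4)$ entry $R_{44}-\eta$ of $D$ to vanish.

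The substance of the proof is the verification that this $D$ satisfies the Leibniz rule $D[x,y]=[D(x),y]+[x,D(y)]$ on $2\mathfrak{g}_{2.1}$. Reading the columns of $D$ gives the generator images $D(e_1)=(R_{11}-R_{22})e_1$, $D(e_2)=R_{12}e_1$, $D(e_3)=(R_{33}-R_{22})e_3$ and $D(e_4)=R_{34}e_3$. Since a derivation is determined by its action on brackets, I would check the identity on the six pairs $(e_i,e_j)$ with $i<j$. The four pairs $(e_1,e_3)$, $(e_1,e_4)$, $(e_2,e_3)$, $(e_2,e_4)$ are immediate: the corresponding bracket $[e_i,e_j]$ vanishes, and since every $D(e_k)$ lands in the span of $e_1$ or of $e_3$, each term on the right-hand side is itself a vanishing bracket.

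The two remaining pairs carry the nonzero commutators and are where the argument actually bites. For $(e_1,e_2)$ one computes $[D(e_1),e_2]+[e_1,D(e_2)]=(R_{11}-R_{22})[e_1,e_2]+R_{12}[e_1,e_1]=(R_{11}-R_{22})e_1$, matching $D[e_1,e_2]=D(e_1)$. For $(e_3,e_4)$ one has $[D(e_3),e_4]+[e_3,D(e_4)]=(R_{33}-R_{22})[e_3,e_4]+R_{34}[e_3,e_3]=(R_{33}-R_{22})e_3=D(e_3)=D[e_3,e_4]$; here the hypothesis $R_{22}=R_{44}$ is precisely what guarantees that $D(e_4)$ has no $e_4$-component, so that no spurious $(R_{44}-R_{22})e_3$ term survives. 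Having confirmed all six cases, $D$ is a derivation and $\operatorname{Ric}=\eta\,I_4+D$ exhibits the inner product as an algebraic Ricci soliton. The main obstacle is conceptual rather than computational: one must recognize that the single scalar $\eta=R_{22}$ is simultaneously dictated by the $e_2$-block and the $e_4$-block, which is exactly why the compatibility condition $R_{22}=R_{44}$ appears among the hypotheses and is indispensable for the $(e_3,e_4)$ check.
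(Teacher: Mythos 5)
Your proof is correct, and every computation checks out: with the hypotheses substituted, $D=\operatorname{Ric}-R_{22}I_4$ has columns $D(e_1)=(R_{11}-R_{22})e_1$, $D(e_2)=R_{12}e_1$, $D(e_3)=(R_{33}-R_{22})e_3$, $D(e_4)=R_{34}e_3$, and your six Leibniz checks, including the observation that the four zero-bracket pairs trivialize because every image lies in $\operatorname{span}\{e_1\}$ or $\operatorname{span}\{e_3\}$, are accurate. However, you run the argument in the opposite logical direction from the paper. You prove sufficiency: given the listed relations, the displayed $D$ is a derivation, so $\operatorname{Ric}=\eta I_4+D$ is an algebraic Ricci soliton — which is exactly the literal ``if'' of the statement. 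The paper instead assumes the soliton property, i.e.\ that the general $D$ of equation (3) with all sixteen entries $R_{ij}-\eta\delta_{ij}$ is a derivation, and extracts the constraints: the pair $(e_1,e_2)$ forces $\eta=R_{22}$ and $R_{21}=R_{31}=R_{41}=0$, the pair $(e_3,e_4)$ forces $R_{22}=R_{44}$ and $R_{13}=R_{23}=R_{43}=0$, and the zero-bracket pairs — trivial in your direction — are precisely where the paper obtains $R_{24}=0$, $R_{42}=0$, and $R_{32}=R_{14}=0$. What each approach buys: yours is shorter and cleanly establishes the stated implication, since substituting the hypotheses first collapses most checks; the paper's derivation (written as a chain of equivalences) additionally shows the conditions are \emph{necessary}, i.e.\ that no other pseudo-Riemannian algebraic Ricci solitons on $2\mathfrak{g}_{2.1}$ exist, which is what the paper's classification program actually needs. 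Your closing remark correctly identifies the conceptual pivot — the single scalar $\eta$ is pinned down independently by the $(e_1,e_2)$ and $(e_3,e_4)$ blocks, so $R_{22}=R_{44}$ is the compatibility condition — but note that in your direction this condition is \emph{used} (to kill the $e_4$-component of $D(e_4)$), whereas in the paper's direction it is \emph{derived}; if you ever need the full characterization rather than the one-way implication, you would have to supplement your verification with the converse computation.
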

\begin{proof}
Given that $D$ in (\ref{derivation}) is a derivation of $2\mathfrak{g}_{2.1}$, it satisfies the following condition
\begin{eqnarray*}
& [D(e_1), e_2] + [e_1, D(e_2)] = D[e_1, e_2] \hspace{6.3cm}\\ \Leftrightarrow & \left[ \left( R_{11} - \eta\right)e_1 + R_{21}e_2 + R_1{3}e_3 + R_{41}e_4, e_2\right] + \hspace{4.5cm}\\ & \left[ e_1, R_{12}e_1 + \left(R_{22} - \eta\right)e_2 + R_{32}e_3 + R_{42}e_4\right] = D(e_1) \hspace{3.5cm}\\
\Leftrightarrow& \left( R_{11} - \eta\right)e_1 + \left( R_{22} - \eta\right)e_1 = \left( R_{11} - \eta\right)e_1 + R_{21}e_2 + R_{31}e_3 + R_{41}e_4\hspace{0.6cm}
\end{eqnarray*}
By examining the components of $e_i$, we find that
$$\left\lbrace\begin{array}{lll}
R_{11} - \eta + R_{22} - \eta = R_{11} - \eta \Longrightarrow \eta = R_{22} \\
\\
R_{21} = R_{31} = R_{41} = 0 
\end{array}\right.$$
Then the derivation $D$ becomes
$$D = \begin{bmatrix}
R_{11} - R_{22} & R_{12} & R_{13} & R_{14}\\
\\
0 & 0 & R_{23} & R_{24}\\
\\
0 & R_{32} & R_{33} - R_{22} & R_{34}\\
\\
0 & R_{42} & R_{43} & R_{44} - R_{22}
\end{bmatrix}.$$
Next, $D$ satisfies the following property
\begin{eqnarray*}
& [D(e_3), e_4] + [e_3, D(e_4)] = D[e_3, e_4] \hspace{6.9cm}\\ \Leftrightarrow & \left[ R_{13}e_1 + R_{23}e_2 + \left( R_{33} - R_{22}\right)e_3 + R_{43}e_4, e_4\right] + \hspace{4.8cm}\\ & \left[ e_3, R_{14}e_1 + R_{24}e_2 + R_{34}e_3 + \left(R_{44} - R_{22}\right)e_4\right] = D(e_3) \hspace{3.7cm}\\
\Leftrightarrow& \left( R_{33} - R_{22}\right)e_3 + \left( R_{44} - R_{22}\right)e_3 = R_{13}e_1 + R_{23}e_2 + \left( R_{33} - R_{22}\right)e_3 + R_{43}e_4
\end{eqnarray*}
Identifying the components of $e_i$, yields
$$\left\lbrace\begin{array}{lll}
R_{33} - R_{22} + R_{44} - R_{22} = R_{33} - R_{22} \Longrightarrow R_{22} = R_{44} \\
\\
R_{13} = R_{23} = R_{43} = 0 
\end{array}\right.$$
Then the derivation $D$ becomes
$$D = \begin{bmatrix}
R_{11} - R_{22} & R_{12} & 0 & R_{14}\\
\\
0 & 0 & 0 & R_{24}\\
\\
0 & R_{32} & R_{33} - R_{22} & R_{34}\\
\\
0 & R_{42} & 0 & 0
\end{bmatrix}.$$
Now, it is easy to check that the condition $[D(e_1), e_3] + [e_1, D(e_3)] = 0$ holds trivially, and we obtain that
\begin{eqnarray*}
\left[D(e_1), e_4\right] + \left[e_1, D(e_4)\right] & = & 0 \Leftrightarrow R_{24} = 0\\
\left[D(e_2), e_3\right] + \left[e_2, D(e_3)\right] & = & 0 \Leftrightarrow R_{42} = 0\\
\left[D(e_2), e_4\right] + \left[e_2, D(e_4)\right] & = & 0 \Leftrightarrow R_{32} = R_{14} = 0
\end{eqnarray*}
Hence, the results asserted by the theorem follow.
\end{proof}
\subsection{The Lie algebra $\mathfrak{g}_{3.1} \oplus \mathfrak{g}_1$}
The Lie algebra $\mathfrak{g}_{3.1} \oplus \mathfrak{g}_1$ has a basis $\mathcal{B} = \left\lbrace e_1, e_2, e_3, e_4\right\rbrace$ such that the only nonzero Lie bracket is $[e_2, e_3] = e_1$.
\begin{theorem} \label{theorem3.3}
A pseudo-Riemannian inner product on $\mathfrak{g}_{3.1} \oplus \mathfrak{g}_1$ is an algebraic Ricci soliton, provided its Ricci operator satisfies the conditions below
$$R_{21} = R_{31} = R_{41} = R_{24} = R_{34} = 0 .$$
Under the stated conditions, the Ricci operator can be expressed as $\operatorname{Ric} = \eta I_4 + D$ where $\eta = R_{22} + R_{33} - R_{11}$ and
$$D = \begin{bmatrix}
R_{11} - \eta & R_{12} & R_{13} & R_{14}\\
\\
0 & R_{22} - \eta & R_{23} & 0\\
\\
0 & R_{32} & R_{33} - \eta & 0\\
\\
0 & R_{42} & R_{43} & R_{44} - \eta
\end{bmatrix}$$
is a derivation of $\mathfrak{g}_{3.1} \oplus \mathfrak{g}_1$ with respect to the basis $\mathcal{B}$.
\end{theorem}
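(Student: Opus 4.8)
The plan is to follow the same strategy as in the two preceding theorems: begin with the matrix $D$ of (\ref{derivation}), impose the Leibniz rule $D[x,y] = [D(x),y] + [x,D(y)]$ on every pair of basis vectors, and read off the resulting constraints on the entries $R_{ij}$ together with the value of $\eta$. Writing the columns of $D$ explicitly, $D(e_1) = (R_{11}-\eta)e_1 + R_{21}e_2 + R_{31}e_3 + R_{41}e_4$ and analogously for $D(e_2), D(e_3), D(e_4)$, the whole verification reduces to evaluating brackets against the single nonzero commutator $[e_2,e_3]=e_1$, keeping in mind that $e_1$ and $e_4$ are central.

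First I would treat the distinguished pair $(e_2,e_3)$. Since $[e_2,e_3]=e_1$, the Leibniz rule reads $D(e_1) = [D(e_2),e_3] + [e_2,D(e_3)]$; because the only surviving brackets are those producing $e_1$, the right-hand side collapses to $(R_{22}-\eta)e_1 + (R_{33}-\eta)e_1$. Comparing the $e_1$-coefficient with that of $D(e_1)$ forces $R_{22}+R_{33}-2\eta = R_{11}-\eta$, i.e. $\eta = R_{22}+R_{33}-R_{11}$, while matching the $e_2$-, $e_3$- and $e_4$-coefficients yields $R_{21}=R_{31}=R_{41}=0$. Note that, unlike the cases $\mathfrak{g}_{2.1}\oplus 2\mathfrak{g}_1$ and $2\mathfrak{g}_{2.1}$, here the derived element $e_1$ differs from both slots $e_2,e_3$, which is exactly why $\eta$ comes out as the combination $R_{22}+R_{33}-R_{11}$ rather than a single diagonal entry.

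The key point, and the step most easily overlooked, is that the Leibniz rule must also hold for the pairs with vanishing bracket. The pairs involving $e_1$ on either slot, as well as $(e_1,e_4)$, are trivial by centrality; the final two constraints hide in the pairs $(e_2,e_4)$ and $(e_3,e_4)$. Although $[e_2,e_4]=[e_3,e_4]=0$, the image $D(e_4) = R_{14}e_1 + R_{24}e_2 + R_{34}e_3 + (R_{44}-\eta)e_4$ has components along $e_2$ and $e_3$ that bracket nontrivially: concretely $[e_2,D(e_4)] = R_{34}e_1$ and $[e_3,D(e_4)] = -R_{24}e_1$, so requiring these to vanish gives $R_{34}=0$ and $R_{24}=0$ respectively. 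The obstacle is therefore not computational difficulty but the bookkeeping discipline of remembering that a zero-bracket pair can still impose a condition whenever $D$ fails to preserve the relevant slot.

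Finally, I would substitute $\eta = R_{22}+R_{33}-R_{11}$ together with $R_{21}=R_{31}=R_{41}=R_{24}=R_{34}=0$ back into (\ref{derivation}) to recover the stated matrix for $D$, and confirm by a direct inspection that, with these entries, the Leibniz rule holds on all six basis pairs. This establishes that $D$ is genuinely a derivation of $\mathfrak{g}_{3.1}\oplus\mathfrak{g}_1$ and that $\operatorname{Ric} = \eta I_4 + D$, which is precisely the soliton condition asserted by the theorem.
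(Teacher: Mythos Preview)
Your proposal is correct and follows essentially the same approach as the paper: apply the Leibniz rule first to the nonzero bracket $[e_2,e_3]=e_1$ to extract $\eta=R_{22}+R_{33}-R_{11}$ and $R_{21}=R_{31}=R_{41}=0$, then to the zero-bracket pairs $(e_2,e_4)$ and $(e_3,e_4)$ to obtain $R_{34}=0$ and $R_{24}=0$. One small remark on wording: the $(e_1,e_j)$ conditions are trivial not purely because $e_1$ is central, but because the earlier step forces $D(e_1)=(R_{11}-\eta)e_1$ to lie in the center---which is exactly how the paper (implicitly) dismisses them.
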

\begin{proof}
As a derivation of $\mathfrak{g}_{3.1} \oplus \mathfrak{g}_1$, $D$ in (\ref{derivation}) satisfies the following condition
\begin{eqnarray*}
& [D(e_2), e_3] + [e_2, D(e_3)] = D[e_2, e_3] \hspace{6.2cm}\\ \Leftrightarrow & \left[ R_{12}e_1 + \left( R_{22} - \eta\right)e_2 + R_{32}e_3 + R_{42}e_4, e_3\right] + \hspace{4.5cm}\\ & \left[ e_2, R_{13}e_1 + R_{23}e_2 + \left(R_{33} - \eta\right)e_3 + R_{43}e_4\right] = D(e_1) \hspace{3.4cm}\\
\Leftrightarrow& \left( R_{22} - \eta\right)e_1 + \left( R_{33} - \eta\right)e_1 = \left( R_{11} - \eta\right)e_1 + R_{21}e_2 + R_{31}e_3 + R_{41}e_4\hspace{0.4cm}
\end{eqnarray*}
An analysis of the components of $e_i$ shows that
$$\left\lbrace\begin{array}{lll}
R_{22} - \eta + R_{33} - \eta = R_{11} - \eta \Longrightarrow \eta = R_{22} + R_{33} - R_{11} \\
\\
R_{21} = R_{31} = R_{41} = 0 
\end{array}\right.$$
According to this fact, the derivation $D$ becomes
$$D = \begin{bmatrix}
R_{11} - \eta & R_{12} & R_{13} & R_{14}\\
\\
0 & R_{22} - \eta & R_{23} & R_{24}\\
\\
0 & R_{32} & R_{33} - \eta & R_{34}\\
\\
0 & R_{42} & R_{43} & R_{44} - \eta
\end{bmatrix}.$$
Now, it is easy to see that the following three condition are trivial:
$$[D(e_1), e_i] + [e_1, D(e_i)] = 0 \quad \text{for}\quad i = 2, 3, 4.$$
But we can see that
$$[D(e_2), e_4] + [e_2, D(e_4)] = 0 \Leftrightarrow R_{34} = 0$$
$$[D(e_3), e_4] + [e_3, D(e_4)] = 0 \Leftrightarrow R_{24} = 0$$
Consequently, we have obtained all the required implications of the theorem.
\end{proof}
\subsection{The Lie algebra $\mathfrak{g}_{3.2} \oplus \mathfrak{g}_1$}
The Lie algebra $\mathfrak{g}_{3.2} \oplus \mathfrak{g}_1$ has a basis $\mathcal{B} = \left\lbrace e_1, e_2, e_3, e_4\right\rbrace$ such that the nonzero Lie brackets are $$[e_2, e_3] = e_1 - e_2, \qquad [e_3, e_1] = e_1.$$
\begin{theorem}
A pseudo-Riemannian inner product on $\mathfrak{g}_{3.2} \oplus \mathfrak{g}_1$ is an algebraic Ricci soliton precisely when the associated Ricci operator satisfies the conditions
\begin{multicols}{2}
\begin{itemize}
\item $R_{i1} = 0, \; i = 2, 3, 4$
\item $R_{32} = R_{42} = 0$
\item $R_{i4} = 0, \; i = 1, 2, 3$
\item $R_{22} = R_{11}$
\end{itemize}
\end{multicols}
Provided these conditions are satisfied, the Ricci operator can take the form\\ $\operatorname{Ric} = \eta I_4 + D$ where $\eta = R_{33}$ and
$$D = \begin{bmatrix}
R_{11} - R_{33} & R_{12} & R_{13} & 0\\
\\
0 & R_{11} - R_{33} & R_{23} & 0\\
\\
0 & 0 & 0 & 0\\
\\
0 & 0 & R_{43} & R_{44} - R_{33}
\end{bmatrix}$$
is a derivation of $\mathfrak{g}_{3.2} \oplus \mathfrak{g}_1$ with respect to the basis $\mathcal{B}$.
\end{theorem}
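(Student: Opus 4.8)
The plan is to reuse the strategy of the previous theorems: take the general derivation $D$ exhibited in (\ref{derivation}) and impose the Leibniz rule $D[e_i,e_j] = [D(e_i),e_j] + [e_i,D(e_j)]$ on each pair of basis vectors, reading off linear constraints on $\eta$ and the entries $R_{ij}$ by comparing the coefficients of $e_1,e_2,e_3,e_4$. Since the nonzero brackets of $\mathfrak{g}_{3.2}\oplus\mathfrak{g}_1$ are $[e_2,e_3] = e_1 - e_2$ and $[e_3,e_1] = e_1$, I would exploit these two first and then dispatch the remaining brackets, all of which vanish.

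First, applying the derivation condition to $[e_2,e_3] = e_1 - e_2$, that is $[D(e_2),e_3] + [e_2,D(e_3)] = D(e_1) - D(e_2)$, and comparing components, I expect to obtain $\eta = R_{22} + R_{33} - R_{11}$ together with $R_{21} = \eta - R_{33}$ and the two relations $R_{31} = R_{32}$, $R_{41} = R_{42}$ arising from the $e_3$- and $e_4$-components.

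The main step, and the chief obstacle, is the bracket $[e_3,e_1] = e_1$ (equivalently $[e_1,e_3] = -e_1$). Comparing components here should force $R_{31} = R_{41} = 0$ and a second expression for the same entry, $R_{21} = R_{33} - \eta$. The crux is that this is consistent with the value $R_{21} = \eta - R_{33}$ extracted from the first bracket only if $\eta = R_{33}$; this in turn gives $R_{21} = 0$ and, via $\eta = R_{22} + R_{33} - R_{11}$, the diagonal relation $R_{22} = R_{11}$. Feeding $R_{31} = R_{41} = 0$ back into $R_{31} = R_{32}$ and $R_{41} = R_{42}$ yields $R_{32} = R_{42} = 0$. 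This interaction between two nonzero brackets is precisely what makes the present case more delicate than $\mathfrak{g}_{3.1}\oplus\mathfrak{g}_1$, where a single bracket closed the argument.

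It then remains to process the brackets involving $e_4$. I would check that $[e_1,e_4] = 0$ and $[e_2,e_4] = 0$ each force $R_{34} = 0$, while $[e_3,e_4] = 0$ yields $R_{24} = 0$ and $R_{14} - R_{24} = 0$, hence $R_{14} = 0$; the bracket $[e_1,e_2] = 0$ is then automatically satisfied. Substituting $\eta = R_{33}$, $R_{22} = R_{11}$, and the vanishing entries into (\ref{derivation}) recovers the stated matrix $D$. Since these constraints were obtained as exactly the conditions under which this $D$ satisfies the Leibniz rule on every bracket, the converse direction is immediate, giving the asserted equivalence $\operatorname{Ric} = R_{33}\,I_4 + D$.
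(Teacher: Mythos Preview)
Your proposal is correct and follows essentially the same route as the paper: impose the Leibniz rule on $[e_2,e_3]$ and $[e_3,e_1]$ to extract the two competing expressions $R_{21}=\eta-R_{33}$ and $R_{21}=R_{33}-\eta$, deduce $\eta=R_{33}$, $R_{21}=0$, $R_{22}=R_{11}$, $R_{31}=R_{32}=R_{41}=R_{42}=0$, and then clear $R_{34},R_{24},R_{14}$ from the brackets with $e_4$. The only cosmetic difference is that the paper records $[e_2,e_4]$ as trivial once $R_{34}=0$ is known from $[e_1,e_4]$, whereas you observe that it independently re-derives $R_{34}=0$.
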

\begin{proof}
As $D$ in (\ref{derivation}) serves as a derivation of $\mathfrak{g}_{3.2} \oplus \mathfrak{g}_1$, the following condition holds
\begin{eqnarray*}
& [D(e_2), e_3] + [e_2, D(e_3)] = D[e_2, e_3] \hspace{7.3cm}\\ \Leftrightarrow & \left[ R_{12}e_1 + \left( R_{22} - \eta\right)e_2 + R_{32}e_3 + R_{42}e_4, e_3\right] + \hspace{5.6cm}\\ & \left[ e_2, R_{13}e_1 + R_{23}e_2 + \left(R_{33} - \eta\right)e_3 + R_{43}e_4\right] = D(e_1) - D(e_2) \hspace{3cm}\\
\Leftrightarrow& -R_{12}e_1 + \left( R_{22} - \eta\right)(e_1 - e_2) + \left( R_{33} - \eta\right)(e_1 - e_2) \hspace{4.6cm}\\
& = \left( R_{11} - \eta - R_{12}\right)e_1 + \left(R_{21} - R_{22} + \eta\right)e_2 + \left(R_{31} - R_{32}\right)e_3 + \left( R_{41} - R_{42}\right)e_4
\end{eqnarray*}
Upon identifying the components of $e_i$, we conclude that
$$\left\lbrace\begin{array}{lll}
- R_{12} + R_{22} - \eta + R_{33} - \eta = R_{11} - \eta - R_{12} \Longrightarrow \eta = R_{22} + R_{33} - R_{11} \\
\\
\eta - R_{22} + \eta - R_{33} = R_{21} - R_{22} + \eta \Longrightarrow \eta = R_{33} + R_{21}\\
\\
R_{31} = R_{32}, \quad R_{41} = R_{42}
\end{array}\right.$$
Similarly, $D$ satisfies the following condition
\begin{eqnarray*}
& [D(e_3), e_1] + [e_3, D(e_1)] = D[e_3, e_1] \hspace{8.6cm}\\ \Leftrightarrow & \left[ R_{13}e_1 + R_{23}e_2 + \left( R_{33} - \eta\right)e_3 + R_{43}e_4, e_1\right] + \hspace{6.9cm}\\ & \left[ e_3, \left( R_{11} - \eta\right)e_1 + R_{21}e_2 + R_{31}e_3 + R_{41}e_4\right] = D(e_1) \hspace{5.8cm}\\
\Leftrightarrow& \left( R_{33} - \eta\right)e_1 + \left( R_{11} - \eta\right)e_1 + R_{21}(e_2 - e_1) = \left( R_{11} - \eta\right)e_1 + R_{21}e_2 + R_{31}e_3 + R_{41}e_4
\end{eqnarray*}
From the components of $e_i$, we obtain
$$\left\lbrace\begin{array}{lll}
R_{33} - \eta + R_{11} - \eta - R_{21} = R_{11} - \eta \Longrightarrow \eta = R_{33} - R_{21} \\
\\
R_{31} = R_{41} = 0 
\end{array}\right.$$
We have $\eta = R_{33} + R_{21} = R_{33} - R_{21}$, hence $R_{21} = 0$. Then $\eta = R_{33} = R_{22} + R_{33} - R_{11}$, thus $R_{22} = R_{11}$. Therefore, the derivation $D$ becomes
$$D = \begin{bmatrix}
R_{11} - R_{33} & R_{12} & R_{13} & R_{14}\\
\\
0 & R_{11} - R_{33} & R_{23} & R_{24}\\
\\
0 & 0 & 0 & R_{34}\\
\\
0 & 0 & R_{43} & R_{44} - R_{33}
\end{bmatrix}.$$
Now, it is easy to see that the condition $[D(e_1), e_2] + [e_1, D(e_2)] = 0$ is trivial. But the condition $[D(e_1), e_4] + [e_1, D(e_4)] = 0$ gives $R_{34} = 0$. Since $R_{34} = 0$, the condition $[D(e_2), e_4] + [e_2, D(e_4)] = 0$ is trivial. Again, the condition $[D(e_3), e_4] + [e_3, D(e_4)] = 0$ forces $R_{14} = R_{24} = 0$. Thus, the theorem's conclusions follow.
\end{proof}
\subsection{The Lie algebra $\mathfrak{g}_{3.3} \oplus \mathfrak{g}_1$}
The Lie algebra $\mathfrak{g}_{3.3} \oplus \mathfrak{g}_1$ has a basis $\mathcal{B} = \left\lbrace e_1, e_2, e_3, e_4\right\rbrace$ such that the nonzero Lie brackets are $$[e_2, e_3] = - e_2, \qquad [e_3, e_1] = e_1.$$
\begin{theorem}
A pseudo-Riemannian inner product on $\mathfrak{g}_{3.3} \oplus \mathfrak{g}_1$ is an algebraic Ricci soliton if its Ricci operator satisfies
$$R_{31} = R_{41} = R_{32} = R_{42} = R_{14} = R_{24} = R_{34} = 0.$$
In this case, the Ricci operator can be stated as $\operatorname{Ric} = \eta I_4 + D$ where $\eta = R_{33}$ and
$$D = \begin{bmatrix}
R_{11} - R_{33} & R_{12} & R_{13} & 0\\
\\
R_{21} & R_{22} - R_{33} & R_{23} & 0\\
\\
0 & 0 & 0 & 0\\
\\
0 & 0 & R_{43} & R_{44} - R_{33}
\end{bmatrix}$$
is a derivation of $\mathfrak{g}_{3.3} \oplus \mathfrak{g}_1$ with respect to the basis $\mathcal{B}$.
\end{theorem}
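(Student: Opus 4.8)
The plan is to follow the same strategy used for the preceding decomposable algebras: start from the candidate derivation $D$ displayed in (\ref{derivation}), namely $D = \operatorname{Ric} - \eta I_4$ with $\eta$ still undetermined, and impose the Leibniz rule $D[e_i,e_j] = [D(e_i),e_j] + [e_i,D(e_j)]$ on every pair of basis vectors. Since $\mathfrak{g}_{3.3}\oplus\mathfrak{g}_1$ has only the two nonzero brackets $[e_2,e_3]=-e_2$ and $[e_3,e_1]=e_1$ (equivalently $[e_1,e_3]=-e_1$ and $[e_3,e_2]=e_2$), the four remaining pairs bracket to zero, so the Leibniz rule on those reduces to $[D(e_i),e_j]+[e_i,D(e_j)]=0$.

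First I would process the two nonzero brackets. Expanding $[D(e_2),e_3]+[e_2,D(e_3)]$ by means of $[e_1,e_3]=-e_1$ and $[e_2,e_3]=-e_2$, and matching the result against $D[e_2,e_3]=-D(e_2)$, the $e_2$-component yields $\eta=R_{33}$, while the vanishing $e_3$- and $e_4$-components on the left force $R_{32}=R_{42}=0$. Treating $[e_3,e_1]=e_1$ the same way, with $[e_3,e_1]=e_1$ and $[e_3,e_2]=e_2$, reconfirms $\eta=R_{33}$ from the $e_1$-component and kills $R_{31}=R_{41}=0$ from the $e_3$- and $e_4$-components. At this stage $\eta=R_{33}$ is fixed and the subdiagonal of the first column together with the $(3,2)$ and $(4,2)$ entries have vanished.

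Next I would run through the four vanishing brackets. The pair $(e_1,e_2)$ is trivial because $e_1$ and $e_2$ bracket to zero with each other. The essential information comes from the pairs containing $e_4$: from $[e_1,e_4]=0$ one gets $[e_1,D(e_4)]=R_{34}[e_1,e_3]=-R_{34}e_1$, forcing $R_{34}=0$; and from $[e_3,e_4]=0$ one gets $[e_3,D(e_4)]=R_{14}[e_3,e_1]+R_{24}[e_3,e_2]=R_{14}e_1+R_{24}e_2$, forcing $R_{14}=R_{24}=0$. The remaining pair $(e_2,e_4)$ only re-derives $R_{34}=0$ and so contributes nothing new. Collecting the surviving entries reproduces exactly $\eta=R_{33}$ and the stated matrix $D$.

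I do not expect a genuine obstacle, as the computation is purely mechanical linear algebra once the brackets of the columns $D(e_i)$ are written out. The one place demanding care is sign bookkeeping: because $[e_2,e_3]=-e_2$ carries a minus sign and $[e_3,e_1]=e_1$ must be read as $[e_1,e_3]=-e_1$, the main risk is mis-assigning a sign when expanding $[D(e_i),e_j]$, which would corrupt the determination of $\eta$ or spuriously retain an off-diagonal entry. Since the seven listed conditions are both forced by, and sufficient for, $D=\operatorname{Ric}-R_{33}I_4$ to satisfy the Leibniz rule, the pseudo-Riemannian inner product is then an algebraic Ricci soliton and the theorem follows.
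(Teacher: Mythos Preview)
Your proposal is correct and follows essentially the same approach as the paper: you impose the Leibniz rule on each pair $(e_i,e_j)$, extract $\eta=R_{33}$ and $R_{32}=R_{42}=R_{31}=R_{41}=0$ from the two nonzero brackets, and then obtain $R_{34}=R_{14}=R_{24}=0$ from the pairs involving $e_4$, exactly as the paper does (only the order in which the $(e_i,e_4)$ pairs are treated differs trivially). One small point of exposition: when you call the $(e_1,e_2)$ pair trivial, the reason is not merely that $[e_1,e_2]=0$ but that, after the first step, $D(e_1)$ and $D(e_2)$ lie in $\operatorname{span}\{e_1,e_2\}$, which is abelian; this is implicit in your argument and matches the paper's own one-line dismissal.
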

\begin{proof}
Being a derivation of $\mathfrak{g}_{3.3} \oplus \mathfrak{g}_1$, $D$ in (\ref{derivation}) satisfies the following condition
\begin{eqnarray*}
& [D(e_2), e_3] + [e_2, D(e_3)] = D[e_2, e_3] \hspace{7.6cm}\\ \Leftrightarrow & \left[ R_{12}e_1 + \left( R_{22} - \eta\right)e_2 + R_{32}e_3 + R_{42}e_4, e_3\right] + \hspace{5.8cm}\\ & \left[ e_2, R_{13}e_1 + R_{23}e_2 + \left(R_{33} - \eta\right)e_3 + R_{43}e_4\right] = - D(e_2) \hspace{4.3cm}\\
\Leftrightarrow& -R_{12}e_1 - \left( R_{22} - \eta\right)e_2 - \left( R_{33} - \eta\right)e_2 = -R_{12}e_1 - \left(R_{22} - \eta\right)e_2 - R_{32}e_3 - R_{42}e_4
\end{eqnarray*}
By identifying each component of $e_i$, we deduce that
$$\left\lbrace\begin{array}{lll}
R_{22} - \eta + R_{33} - \eta = R_{22} - \eta \Longrightarrow \eta = R_{33} \\
\\
R_{32} = R_{42} = 0
\end{array}\right.$$
Then the derivation $D$ becomes
$$D = \begin{bmatrix}
R_{11} - R_{33} & R_{12} & R_{13} & R_{14}\\
\\
R_{21} & R_{22} - R_{33} & R_{23} & R_{24}\\
\\
R_{31} & 0 & 0 & R_{34}\\
\\
R_{41} & 0 & R_{43} & R_{44} - R_{33}
\end{bmatrix}.$$
Again, $D$ satisfies the following condition
\begin{eqnarray*}
& [D(e_3), e_1] + [e_3, D(e_1)] = D[e_3, e_1] \hspace{8.5cm}\\ \Leftrightarrow & \left[ R_{13}e_1 + R_{23}e_2 + R_{43}e_4, e_1\right] + \left[ e_3, \left( R_{11} - R_{33}\right)e_1 + R_{21}e_2 + R_{31}e_3 + R_{41}e_4\right] = D(e_1) \\
\Leftrightarrow& \left( R_{11} - R_{33}\right)e_1 + R_{21}e_2 = \left( R_{11} - R_{33}\right)e_1 + R_{21}e_2 + R_{31}e_3 + R_{41}e_4\hspace{3cm}
\end{eqnarray*}
This shows that $R_{31} = R_{41} = 0$. Then the derivation $D$ becomes
$$D = \begin{bmatrix}
R_{11} - R_{33} & R_{12} & R_{13} & R_{14}\\
\\
R_{21} & R_{22} - R_{33} & R_{23} & R_{24}\\
\\
0 & 0 & 0 & R_{34}\\
\\
0 & 0 & R_{43} & R_{44} - R_{33}
\end{bmatrix}.$$
Now, it is easy to see that the condition $[D(e_1), e_2] + [e_1, D(e_2)] = 0$ is trivial. But the condition $[D(e_1), e_4] + [e_1, D(e_4)] = 0$ gives $R_{34} = 0$. Since $R_{34} = 0$, the condition $[D(e_2), e_4] + [e_2, D(e_4)] = 0$ is trivial. Again, the condition $[D(e_3), e_4] + [e_3, D(e_4)] = 0$ forces $R_{14} = R_{24} = 0$. We have therefore established the theorem's stated results.
\end{proof}
\subsection{The Lie algebra $\mathfrak{g}_{3.4}^0 \oplus \mathfrak{g}_1$}
The Lie algebra $\mathfrak{g}_{3.4}^0 \oplus \mathfrak{g}_1$ has a basis $\mathcal{B} = \left\lbrace e_1, e_2, e_3, e_4\right\rbrace$ such that the nonzero Lie brackets are $$[e_2, e_3] = e_1, \qquad [e_3, e_1] = -e_2.$$
\begin{theorem}
A pseudo-Riemannian inner product on $\mathfrak{g}_{3.4}^0 \oplus \mathfrak{g}_1$ is an algebraic Ricci soliton if its Ricci operator satisfies the following identities
\begin{multicols}{2}
\begin{itemize}
\item $R_{i1} = R_{i2} = 0, \; i = 3, 4$
\item $R_{i4} = 0, \; i = 1, 2, 3$
\item $R_{21} = R_{12}$
\item $R_{22} = R_{11}$
\end{itemize}
\end{multicols}
In this situation, the Ricci operator can be written as $\operatorname{Ric} = \eta I_4 + D$ where $\eta = R_{33}$ and
$$D = \begin{bmatrix}
R_{11} - R_{33} & R_{12} & R_{13} & 0\\
\\
R_{12} & R_{11} - R_{33} & R_{23} & 0\\
\\
0 & 0 & 0 & 0\\
\\
0 & 0 & R_{43} & R_{44} - R_{33}
\end{bmatrix}$$
is a derivation of $\mathfrak{g}_{3.4}^0 \oplus \mathfrak{g}_1$ with respect to the basis $\mathcal{B}$.
\end{theorem}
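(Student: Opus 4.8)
The plan is to prove the sufficiency direction exactly as stated: assuming the listed vanishing and symmetry relations on the entries $R_{ij}$, I will produce the decomposition $\operatorname{Ric} = \eta I_4 + D$ and verify directly that the displayed $D$ is a derivation. By the definition of an algebraic Ricci soliton, exhibiting one real number $\eta$ and one derivation $D$ with $\operatorname{Ric} = \eta I_4 + D$ is enough, so the entire task collapses to a single derivation check.

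First I would set $\eta = R_{33}$ and $D = \operatorname{Ric} - R_{33}\,I_4$. Feeding the hypotheses $R_{31}=R_{41}=R_{32}=R_{42}=0$, $R_{14}=R_{24}=R_{34}=0$, $R_{21}=R_{12}$, and $R_{22}=R_{11}$ into the general form (\ref{derivation}), the matrix $D$ reduces precisely to the one displayed in the statement, and the identity $\operatorname{Ric} = \eta I_4 + D$ holds by construction with no further work. Reading the columns of this $D$ gives the basis action
$$D(e_1) = (R_{11}-R_{33})e_1 + R_{12}e_2, \qquad D(e_2) = R_{12}e_1 + (R_{11}-R_{33})e_2,$$
$$D(e_3) = R_{13}e_1 + R_{23}e_2 + R_{43}e_4, \qquad D(e_4) = (R_{44}-R_{33})e_4,$$
which is all I need to test the Leibniz rule.

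Next I would verify $D[x,y] = [D(x),y] + [x,D(y)]$ on the six basis pairs. Since the Leibniz rule is bilinear, checking it on a basis suffices, and I would split the verification into the two nonzero brackets $[e_2,e_3]=e_1$ and $[e_3,e_1]=-e_2$ and the four vanishing brackets $[e_1,e_2]$, $[e_1,e_4]$, $[e_2,e_4]$, $[e_3,e_4]$. On the vanishing brackets both sides are zero: the hypotheses confine $D(e_1)$ and $D(e_2)$ to $\langle e_1, e_2\rangle$ and make $D(e_4)$ a multiple of $e_4$, so every already-vanishing bracket stays zero after $D$ is applied to either slot.

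The substantive step, and the one I expect to be the main obstacle, is matching both sides on the two nonzero brackets. For $[e_2,e_3]=e_1$ I would compute $[D(e_2),e_3]+[e_2,D(e_3)]$ using $[e_1,e_3]=e_2$ and $[e_2,e_3]=e_1$, obtaining $(R_{11}-R_{33})e_1 + R_{12}e_2$; this must equal $D(e_1)$, and the two agree exactly because $R_{22}=R_{11}$ fixes the $e_1$-coefficient while $R_{21}=R_{12}$ fixes the $e_2$-coefficient. The bracket $[e_3,e_1]=-e_2$ closes symmetrically, consuming the same two relations. Thus $R_{11}=R_{22}$ and $R_{12}=R_{21}$ are precisely the conditions that force the derivation property on the derived ideal $\langle e_1,e_2\rangle$, and once these two brackets are settled the proof is complete.
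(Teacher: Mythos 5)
Your proposal is correct, and it runs the computation in the converse direction from the paper. You fix $\eta = R_{33}$, substitute the hypothesized relations into (\ref{derivation}) so that $\operatorname{Ric} = \eta I_4 + D$ holds by construction, and then verify the Leibniz rule directly on the six basis pairs; your column readings of $D$ are right, the four vanishing brackets are indeed trivial (since $e_4$ is central and $D$ maps $\operatorname{span}\left\lbrace e_1, e_2\right\rbrace$ into itself and $e_4$ to a multiple of itself), and the checks on $[e_2, e_3] = e_1$ and $[e_3, e_1] = -e_2$ close exactly as you say. The paper instead assumes the inner product is an algebraic Ricci soliton, so that $D = \operatorname{Ric} - \eta I_4$ is a derivation with unknown $\eta$ and unconstrained entries, and extracts the constraints bracket by bracket: the Leibniz identity on $[e_2, e_3]$ gives $\eta = R_{22} + R_{33} - R_{11}$, $R_{12} = R_{21}$, $R_{31} = R_{41} = 0$; the identity on $[e_3, e_1]$ gives $\eta = R_{33} + R_{11} - R_{22}$ and $R_{32} = R_{42} = 0$; combining the two expressions for $\eta$ yields $\eta = R_{33}$ and $R_{22} = R_{11}$; and the brackets with $e_4$ then force $R_{34} = 0$ and $R_{14} = R_{24} = 0$. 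Since each of the paper's steps is an equivalence, that argument delivers necessity as well as sufficiency, i.e.\ a characterization of exactly which Ricci operators can occur, which is what the paper's claimed complete description rests on. Your verification establishes only the sufficiency direction — which is all the literal ``if'' in the statement demands, and for that purpose yours is the cleaner route, since no solving for $\eta$ is needed — but be aware that on its own it does not show the listed conditions are forced, so it would not by itself exclude other algebraic Ricci solitons on $\mathfrak{g}_{3.4}^0 \oplus \mathfrak{g}_1$.
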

\begin{proof}
From the fact that $D$ in (\ref{derivation}) is a derivation of $\mathfrak{g}_{3.4}^0 \oplus \mathfrak{g}_1$, it follows that the following condition holds
\begin{eqnarray*}
& [D(e_2), e_3] + [e_2, D(e_3)] = D[e_2, e_3] \hspace{7.1cm}\\ \Leftrightarrow & \left[ R_{12}e_1 + \left( R_{22} - \eta\right)e_2 + R_{32}e_3 + R_{42}e_4, e_3\right] + \hspace{5.4cm}\\ & \left[ e_2, R_{13}e_1 + R_{23}e_2 + \left(R_{33} - \eta\right)e_3 + R_{43}e_4\right] = D(e_1) \hspace{4.3cm}\\
\Leftrightarrow& R_{12}e_2 + \left( R_{22} - \eta\right)e_1 + \left( R_{33} - \eta\right)e_1 = \left(R_{11} - \eta\right)e_1 + R_{21}e_2 + R_{31}e_3 + R_{41}e_4
\end{eqnarray*}
By identifying the components of $e_i$, we obtain that
$$\left\lbrace\begin{array}{lll}
R_{22} - \eta + R_{33} - \eta = R_{11} - \eta \Longrightarrow \eta = R_{22} + R_{33} - R_{11} \\
\\
R_{12} = R_{21}, \qquad R_{31} = R_{41} = 0
\end{array}\right.$$
Similarly, $D$ adheres to the following condition
\begin{eqnarray*}
& [D(e_3), e_1] + [e_3, D(e_1)] = D[e_3, e_1] \hspace{8.06cm}\\ \Leftrightarrow & \left[ R_{13}e_1 + R_{23}e_2 + \left(R_{33} - \eta\right)e_3 + R_{43}e_4, e_1\right] + \left[ e_3, \left( R_{11} - \eta\right)e_1 + R_{21}e_2\right] = -D(e_2)\\
\Leftrightarrow& -\left( R_{33} - \eta\right)e_2 - \left( R_{11} - \eta\right)e_2 - R_{21}e_1 = - R_{12}e_1 - \left( R_{22} - \eta\right)e_2 - R_{32}e_3 - R_{42}e_4
\end{eqnarray*}
By examining the components of $e_i$, we find that
$$\left\lbrace\begin{array}{lll}
R_{33} - \eta + R_{11} - \eta = R_{22} - \eta \Longrightarrow \eta = R_{33} + R_{11} - R_{22} \\
\\
R_{21} = R_{12}, \qquad R_{32} = R_{42} = 0
\end{array}\right.$$
We have $\eta = R_{22} + R_{33} - R_{11} = R_{33} + R_{11} - R_{22}$, thus $2\eta = \eta + \eta = 2R_{33}$. Hence $\eta = R_{33}$ and then $R_{22} = R_{11}$. Thus, the derivation $D$ becomes
$$D = \begin{bmatrix}
R_{11} - R_{33} & R_{12} & R_{13} & R_{14}\\
\\
R_{12} & R_{11} - R_{33} & R_{23} & R_{24}\\
\\
0 & 0 & 0 & R_{34}\\
\\
0 & 0 & R_{43} & R_{44} - R_{33}
\end{bmatrix}.$$
Now, it is easy to see that the condition $[D(e_1), e_2] + [e_1, D(e_2)] = 0$ is trivial. But the condition $[D(e_1), e_4] + [e_1, D(e_4)] = 0$ gives $R_{34} = 0$. Since $R_{34} = 0$, the condition $[D(e_2), e_4] + [e_2, D(e_4)] = 0$ is trivial. Again, the condition $[D(e_3), e_4] + [e_3, D(e_4)] = 0$ forces $R_{14} = R_{24} = 0$. It follows that the consequences of the theorem have been obtained.
\end{proof}
\subsection{The Lie algebra $\mathfrak{g}_{3.4}^{\alpha} \oplus \mathfrak{g}_1$}
The Lie algebra $\mathfrak{g}_{3.4}^{\alpha} \oplus \mathfrak{g}_1$ ($\alpha > 0$ and $\alpha \neq 1$) has a basis $\mathcal{B} = \left\lbrace e_1, e_2, e_3, e_4\right\rbrace$ such that the nonzero Lie brackets are $$[e_2, e_3] = e_1 - \alpha e_2, \qquad [e_3, e_1] = \alpha e_1 - e_2.$$
\begin{theorem}
A pseudo-Riemannian inner product on $\mathfrak{g}_{3.4}^{\alpha} \oplus \mathfrak{g}_1$ is an algebraic Ricci soliton if its Ricci operator fulfills the following relations
\begin{multicols}{2}
\begin{itemize}
\item $R_{i1} = R_{i2} = 0, \; i = 3, 4$
\item $R_{i4} = 0, \; i = 1, 2, 3$
\item $R_{21} = R_{12}$
\item $R_{22} = R_{11}$
\end{itemize}
\end{multicols}
In this situation, the Ricci operator can be written as $\operatorname{Ric} = \eta I_4 + D$ where $\eta = R_{33}$ and
$$D = \begin{bmatrix}
R_{11} - R_{33} & R_{12} & R_{13} & 0\\
\\
R_{12} & R_{11} - R_{33} & R_{23} & 0\\
\\
0 & 0 & 0 & 0\\
\\
0 & 0 & R_{43} & R_{44} - R_{33}
\end{bmatrix}$$
is a derivation of $\mathfrak{g}_{3.4}^{\alpha} \oplus \mathfrak{g}_1$ with respect to the basis $\mathcal{B}$.
\end{theorem}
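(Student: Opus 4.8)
The plan is to follow the same strategy used in the preceding cases: start from the matrix $D$ in (\ref{derivation}), namely $D = \operatorname{Ric} - \eta I_4$, and impose the Leibniz rule $D[x,y] = [D(x),y] + [x,D(y)]$ on the two nonzero brackets $[e_2,e_3] = e_1 - \alpha e_2$ and $[e_3,e_1] = \alpha e_1 - e_2$, together with the central brackets $[e_i,e_4] = 0$ arising from the $\mathfrak{g}_1$-factor. Since $e_4$ is central, these are the only relations that need to be tested.

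First I would expand the condition coming from $[e_2,e_3]$. Using $[e_1,e_3] = -\alpha e_1 + e_2$ and $[e_2,e_3] = e_1 - \alpha e_2$, the $e_1$-component yields $\eta = R_{22} + R_{33} - R_{11}$, the $e_2$-component yields a relation linking $R_{21}$ and $R_{12}$, and the vanishing $e_3$- and $e_4$-components give $R_{31} = \alpha R_{32}$ and $R_{41} = \alpha R_{42}$. Repeating the computation for $[e_3,e_1]$, this time using $[e_3,e_2] = -e_1 + \alpha e_2$, produces a second value $\eta = R_{33} + R_{11} - R_{22}$, a second relation between $R_{21}$ and $R_{12}$, and the companion conditions $\alpha R_{31} = R_{32}$ and $\alpha R_{41} = R_{42}$.

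The heart of the argument is to combine these two systems. Equating the two expressions for $\eta$ forces $R_{22} = R_{11}$, whence $\eta = R_{33}$ and $R_{21} = R_{12}$. More delicately, the two off-diagonal pairs combine to $(\alpha^2 - 1)R_{32} = 0$ and $(\alpha^2 - 1)R_{42} = 0$; this is precisely where the hypotheses $\alpha > 0$ and $\alpha \neq 1$ become indispensable, since they guarantee $\alpha^2 \neq 1$ and hence $R_{31} = R_{32} = R_{41} = R_{42} = 0$. I expect this combining step, rather than either individual bracket computation, to be the main obstacle, because each of the two nonzero brackets mixes the $e_1$- and $e_2$-directions, so the cancellations that eliminate the off-diagonal entries appear only after both conditions are imposed simultaneously.

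Finally I would dispatch the central brackets. The relation $[D(e_1),e_4] + [e_1,D(e_4)] = 0$, via $[e_1,e_3] = -\alpha e_1 + e_2$, isolates $R_{34}$ and forces $R_{34} = 0$ (here $\alpha \neq 0$ suffices), after which the relation for $[e_2,e_4]$ becomes trivial. The relation $[D(e_3),e_4] + [e_3,D(e_4)] = 0$ produces the pair $\alpha R_{14} = R_{24}$ and $R_{14} = \alpha R_{24}$, which again collapses to $(\alpha^2 - 1)R_{14} = 0$ and hence $R_{14} = R_{24} = 0$ by $\alpha \neq 1$. Collecting all the vanishing entries together with the identities $R_{22} = R_{11}$, $R_{21} = R_{12}$, and $\eta = R_{33}$ then reduces $D$ to the matrix displayed in the statement, completing the proof.
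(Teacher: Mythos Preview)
Your plan is correct and mirrors the paper's proof essentially step for step: the same two bracket conditions are expanded, the two resulting expressions for $\eta$ are compared to force $R_{22}=R_{11}$ and $\eta=R_{33}$, the pair of relations $R_{31}=\alpha R_{32}$, $\alpha R_{31}=R_{32}$ (and likewise for $R_{41},R_{42}$) is combined using $\alpha^2\neq 1$, and the $e_4$-brackets kill $R_{14},R_{24},R_{34}$ by the same $(\alpha^2-1)$-mechanism. One small correction: the sentence ``Since $e_4$ is central, these are the only relations that need to be tested'' is not right---centrality of $e_4$ says nothing about the pair $(e_1,e_2)$, and the Leibniz rule must be verified on \emph{all} pairs, including $[e_1,e_2]=0$; in the paper this check is carried out and found to be trivially satisfied once $R_{31}=R_{32}=R_{41}=R_{42}=0$ have been established, so your conclusion is unaffected.
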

\begin{proof}
Since $D$ in (\ref{derivation}) is a derivation of $\mathfrak{g}_{3.4}^{\alpha} \oplus \mathfrak{g}_1$, it satisfies the following condition
\begin{eqnarray*}
& [D(e_2), e_3] + [e_2, D(e_3)] = D[e_2, e_3] \hspace{11.15cm}\\ \Leftrightarrow & \left[ R_{12}e_1 + \left( R_{22} - \eta\right)e_2 + R_{32}e_3 + R_{42}e_4, e_3\right] + \hspace{9.4cm}\\  & \left[ e_2, R_{13}e_1 + R_{23}e_2 + \left(R_{33} - \eta\right)e_3 + R_{43}e_4\right] = D(e_1) - \alpha D(e_2) \hspace{6.5cm}\\
\Leftrightarrow& R_{12}(e_2 - \alpha e_1) + \left( R_{22} - \eta\right)(e_1 - \alpha e_2) + \left( R_{33} - \eta\right)(e_1 - \alpha e_2) \hspace{6.7cm}\\ & = \left(R_{11} - \eta - \alpha R_{12}\right)e_1 + \left(R_{21} - \alpha\left( R_{22} - \eta\right)\right)e_2 + \left(R_{31} - \alpha R_{32}\right)e_3 + \left(R_{41} - \alpha R_{42}\right)e_4\hspace{2.4cm}
\end{eqnarray*}
Identifying the components of $e_i$ yields
$$\left\lbrace\begin{array}{lll}
-\alpha R_{12} + R_{22} - \eta + R_{33} - \eta = R_{11} - \eta - \alpha R_{12} \Longrightarrow \eta = R_{22} + R_{33} - R_{11} \\
\\
R_{12} - \alpha \left(R_{22} - \eta\right) - \alpha\left(R_{33} - \eta\right) = R_{21} - \alpha\left(R_{22} - \eta\right) \Longrightarrow \alpha\left(R_{33} - \eta\right) = R_{12} - R_{21} \\
\\
R_{31} = \alpha R_{32}, \quad R_{41} = \alpha R_{42}
\end{array}\right.$$
Similarly, $D$ satisfies the following condition
\begin{eqnarray*}
& [D(e_3), e_1] + [e_3, D(e_1)] = D[e_3, e_1] \hspace{14cm}\\ \Leftrightarrow & \left[ R_{13}e_1 + R_{23}e_2 + \left(R_{33} - \eta\right)e_3 + R_{43}e_4, e_1\right] + \hspace{12.3cm}\\  & \left[e_3, \left(R_{11} - \eta\right)e_1 + R_{21}e_2 + R_{31}e_3 + R_{41}e_4\right] = \alpha D(e_1) - D(e_2) \hspace{9.4cm}\\
\Leftrightarrow& \left( R_{33} - \eta\right)(\alpha e_1 - e_2) + \left( R_{11} - \eta\right)(\alpha e_1 - e_2) + R_{21}(\alpha e_2 - e_1) = \hspace{9cm}\\ &  \left(\alpha\left(R_{11} - \eta\right) - R_{12}\right)e_1 + \left(\alpha R_{21} - \left( R_{22} - \eta\right)\right)e_2 + \left(\alpha R_{31} - R_{32}\right)e_3 + \left(\alpha R_{41} - R_{42}\right)e_4\hspace{5.3cm}
\end{eqnarray*}
From the components of $e_i$, it follows that
$$\left\lbrace\begin{array}{lll}
\alpha\left(R_{33} - \eta\right) + \alpha\left(R_{11} - \eta\right) - R_{21} = \alpha\left(R_{11} - \eta\right) - R_{12} \Longrightarrow \alpha\left(R_{33} - \eta\right) = R_{21} - R_{12}  \\
\\
-\left(R_{33} - \eta\right) - \left(R_{11} - \eta\right) + \alpha R_{21} = \alpha R_{21} - \left(R_{22} - \eta\right) \Longrightarrow \eta = R_{33} + R_{11} - R_{22}  \\
\\
\alpha R_{31} = R_{32}, \quad \alpha R_{41} = R_{42}
\end{array}\right.$$
Hence $\alpha R_{31} = \alpha^2 R_{32} = R_{32}$. Since $\alpha > 0$ and $\alpha \neq 1$, then $R_{32} = 0$ which implies that $R_{31} = 0$. Similarly, we see that $\alpha R_{41} = \alpha^2 R_{42} = R_{42}$. Then $R_{42} = R_{41} = 0$. Once again, we see that: $\alpha\left(R_{33} - \eta\right) = R_{12} - R_{21} = -\left(R_{12} - R_{21}\right)$. Thus, $\eta = R_{33}$ and $R_{21} = R_{12}$. Since $\eta =  R_{33} = R_{22} + R_{33} - R_{11} = R_{33} + R_{11} - R_{22}$, then $R_{22} = R_{11}$. Therefore, the derivation $D$ becomes
$$D = \begin{bmatrix}
R_{11} - R_{33} & R_{12} & R_{13} & R_{14}\\
\\
R_{12} & R_{11} - R_{33} & R_{23} & R_{24}\\
\\
0 & 0 & 0 & R_{34}\\
\\
0 & 0 & R_{43} & R_{44} - R_{33}
\end{bmatrix}.$$
Now, it is easy to see that the condition $[D(e_1), e_2] + [e_1, D(e_2)] = 0$ is trivial. But the condition $[D(e_1), e_4] + [e_1, D(e_4)] = 0$ gives $R_{34} = 0$. Since $R_{34} = 0$, the condition $[D(e_2), e_4] + [e_2, D(e_4)] = 0$ is trivial. Next, we can see that
\begin{eqnarray*}
\left[D(e_3), e_4\right] + \left[e_3, D(e_4)\right] = 0 &\Leftrightarrow& \left[e_3, R_{14}e_1 + R_{24}e_2 + \left(R_{44} - R_{33}\right)e_4\right] = 0\\
&\Leftrightarrow& \alpha R_{14}e_1 - R_{14}e_2 + \alpha R_{24}e_2 - R_{24}e_1 = 0\\
&\Leftrightarrow& \alpha R_{14} = R_{24}, \qquad \alpha R_{24} = R_{14}
\end{eqnarray*}
Hence $\alpha R_{14} = \alpha^2 R_{24} = R_{24}$. Since $\alpha > 0$ and $\alpha \neq 1$, then $R_{24} = R_{14} = 0$. Thus, the theorem's conclusions follow.
\end{proof}
\subsection{The Lie algebra $\mathfrak{g}_{3.5}^0 \oplus \mathfrak{g}_1$}
The Lie algebra $\mathfrak{g}_{3.5}^0 \oplus \mathfrak{g}_1$ has a basis $\mathcal{B} = \left\lbrace e_1, e_2, e_3, e_4\right\rbrace$ such that the nonzero Lie brackets are $$[e_2, e_3] = e_1, \qquad [e_3, e_1] = e_2.$$
\begin{theorem}
A pseudo-Riemannian inner product on $\mathfrak{g}_{3.5}^0 \oplus \mathfrak{g}_1$ is an algebraic Ricci soliton, provided its Ricci operator satisfies the conditions below
\begin{multicols}{2}
\begin{itemize}
\item $R_{i1} = R_{i2} = 0, \; i = 3, 4$
\item $R_{i4} = 0, \; i = 1, 2, 3$
\item $R_{21} = -R_{12}$
\item $R_{22} = R_{11}$
\end{itemize}
\end{multicols}
With these conditions in place, the Ricci operator may be written as $\operatorname{Ric} = \eta I_4 + D$ where $\eta = R_{33}$ and
$$D = \begin{bmatrix}
R_{11} - R_{33} & R_{12} & R_{13} & 0\\
\\
-R_{12} & R_{11} - R_{33} & R_{23} & 0\\
\\
0 & 0 & 0 & 0\\
\\
0 & 0 & R_{43} & R_{44} - R_{33}
\end{bmatrix}$$
is a derivation of $\mathfrak{g}_{3.5}^0 \oplus \mathfrak{g}_1$ with respect to the basis $\mathcal{B}$.
\end{theorem}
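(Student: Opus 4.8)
The plan is to follow exactly the strategy used for the earlier summand theorems: start from the generic derivation $D$ recorded in (\ref{derivation}), impose the Leibniz rule $D[x,y]=[D(x),y]+[x,D(y)]$ on each of the two nonzero brackets $[e_2,e_3]=e_1$ and $[e_3,e_1]=e_2$, and then clean up using the remaining (vanishing) brackets that involve $e_4$. Since $\mathfrak{g}_{3.5}^0 \oplus \mathfrak{g}_1$ differs from $\mathfrak{g}_{3.4}^0 \oplus \mathfrak{g}_1$ only by the sign in the second bracket (here $[e_3,e_1]=e_2$ instead of $-e_2$), I expect the argument to mirror that case almost verbatim, with the sign change producing the relation $R_{21}=-R_{12}$ in place of $R_{21}=R_{12}$.

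First I would apply the derivation condition to $[e_2,e_3]=e_1$. Expanding $[D(e_2),e_3]+[e_2,D(e_3)]$ with $[e_1,e_3]=-e_2$ and $[e_2,e_3]=e_1$ (all brackets involving $e_4$ vanish), the left-hand side collapses to $(R_{22}-\eta+R_{33}-\eta)e_1 - R_{12}e_2$. Matching this against $D(e_1)=(R_{11}-\eta)e_1+R_{21}e_2+R_{31}e_3+R_{41}e_4$ component by component yields $\eta=R_{22}+R_{33}-R_{11}$, $R_{21}=-R_{12}$, and $R_{31}=R_{41}=0$.

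Next I would impose the Leibniz rule on $[e_3,e_1]=e_2$. The relevant brackets here are $[e_3,e_1]=e_2$ and $[e_3,e_2]=-e_1$, so $[D(e_3),e_1]+[e_3,D(e_1)]$ reduces to $-R_{21}e_1+(R_{33}-\eta+R_{11}-\eta)e_2$. Equating this with $D(e_2)=R_{12}e_1+(R_{22}-\eta)e_2+R_{32}e_3+R_{42}e_4$ gives the second expression $\eta=R_{33}+R_{11}-R_{22}$, the consistency relation $R_{21}=-R_{12}$, and $R_{32}=R_{42}=0$. Adding the two expressions for $\eta$ forces $\eta=R_{33}$, and substituting back then forces $R_{22}=R_{11}$. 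At this stage $D$ already has the claimed block form apart from the three entries $R_{14},R_{24},R_{34}$ in its last column.

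The final step is to eliminate those remaining entries using the zero brackets with $e_4$. The condition $[D(e_1),e_4]+[e_1,D(e_4)]=0$ reduces via $[e_1,e_3]=-e_2$ to $-R_{34}e_2=0$, so $R_{34}=0$; the analogous conditions coming from $[e_1,e_2]$ and $[e_2,e_4]$ then hold trivially, and $[D(e_3),e_4]+[e_3,D(e_4)]=0$ reduces via $[e_3,e_1]=e_2$ and $[e_3,e_2]=-e_1$ to $-R_{24}e_1+R_{14}e_2=0$, giving $R_{14}=R_{24}=0$. There is no genuine obstacle: the entire argument is bookkeeping with Lie brackets, so the only thing to watch is tracking the antisymmetry signs correctly, particularly $[e_1,e_3]=-e_2$ and $[e_3,e_2]=-e_1$. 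It is precisely this sign pattern that distinguishes the computation from the $\mathfrak{g}_{3.4}^0$ case and accounts for the conclusion $R_{21}=-R_{12}$.
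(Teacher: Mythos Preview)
Your proposal is correct and follows essentially the same approach as the paper: you impose the Leibniz rule first on $[e_2,e_3]=e_1$, then on $[e_3,e_1]=e_2$, combine the two resulting expressions for $\eta$ to obtain $\eta=R_{33}$ and $R_{22}=R_{11}$, and finish by checking the vanishing brackets with $e_4$ to kill $R_{14},R_{24},R_{34}$. The sign bookkeeping (in particular $[e_1,e_3]=-e_2$ and $[e_3,e_2]=-e_1$) is handled correctly and produces $R_{21}=-R_{12}$ exactly as in the paper.
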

\begin{proof}
Given that $D$ in (\ref{derivation}) is a derivation of $\mathfrak{g}_{3.5}^0 \oplus \mathfrak{g}_1$, the following condition holds
\begin{eqnarray*}
& [D(e_2), e_3] + [e_2, D(e_3)] = D[e_2, e_3] \hspace{7.6cm}\\ \Leftrightarrow & \left[ R_{12}e_1 + \left( R_{22} - \eta\right)e_2 + R_{32}e_3 + R_{42}e_4, e_3\right] + \hspace{5.8cm}\\ & \left[ e_2, R_{13}e_1 + R_{23}e_2 + \left(R_{33} - \eta\right)e_3 + R_{43}e_4\right] = D(e_1) \hspace{4.7cm}\\
\Leftrightarrow& -R_{12}e_2 + \left( R_{22} - \eta\right)e_1 + \left( R_{33} - \eta\right)e_1 = \left(R_{11} - \eta\right)e_1 + R_{21}e_2 + R_{31}e_3 + R_{41}e_4
\end{eqnarray*}
An analysis of the components of $e_i$ shows that
$$\left\lbrace\begin{array}{lll}
R_{22} - \eta + R_{33} - \eta = R_{11} - \eta \Rightarrow \eta = R_{22} + R_{33} - R_{11} \\
\\
R_{21} = -R_{12}, \qquad R_{31} = R_{41} = 0
\end{array}\right.$$
Then the derivation $D$ becomes
$$D = \begin{bmatrix}
R_{11} - \eta & R_{12} & R_{13} & R_{14}\\
\\
-R_{12} & R_{22} - \eta & R_{23} & R_{24}\\
\\
0 & R_{32} & R_{33} - \eta & R_{34}\\
\\
0 & R_{42} & R_{43} & R_{44} - \eta
\end{bmatrix}.$$
Once again, the following condition is satisfied by $D$
\begin{eqnarray*}
& [D(e_3), e_1] + [e_3, D(e_1)] = D[e_3, e_1] \hspace{7.7cm}\\ \Leftrightarrow & \left[ R_{13}e_1 + R_{23}e_2 + \left(R_{33} - \eta\right)e_3 + R_{43}e_4, e_1\right] + \left[ e_3, \left( R_{11} - \eta\right)e_1 - R_{12}e_2\right] = D(e_2) \\
\Leftrightarrow& \left( R_{33} - \eta\right)e_2 + \left( R_{11} - \eta\right)e_2 + R_{12}e_1 = R_{12}e_1 + \left( R_{22} - \eta\right)e_2 + R_{32}e_3 + R_{42}e_4\hspace{0.5cm}
\end{eqnarray*}
Upon identifying the components of $e_i$, we conclude that
$$\left\lbrace\begin{array}{lll}
R_{33} - \eta + R_{11} - \eta = R_{22} - \eta \Rightarrow \eta = R_{33} + R_{11} - R_{22} \\
\\
R_{32} = R_{42} = 0
\end{array}\right.$$
We have $\eta = R_{22} + R_{33} - R_{11} = R_{33} + R_{11} - R_{22}$, thus $2\eta = \eta + \eta = 2R_{33}$. Hence $\eta = R_{33}$ and then $R_{22} = R_{11}$. Thus, the derivation $D$ becomes
$$D = \begin{bmatrix}
R_{11} - R_{33} & R_{12} & R_{13} & R_{14}\\
\\
-R_{12} & R_{11} - R_{33} & R_{23} & R_{24}\\
\\
0 & 0 & 0 & R_{34}\\
\\
0 & 0 & R_{43} & R_{44} - R_{33}
\end{bmatrix}.$$
Now, it is easy to see that the condition $[D(e_1), e_2] + [e_1, D(e_2)] = 0$ is trivial. But the condition $[D(e_1), e_4] + [e_1, D(e_4)] = 0$ gives $R_{34} = 0$. Since $R_{34} = 0$, the rule $[D(e_2), e_4] + [e_2, D(e_4)] = 0$ is trivial. Again, the rule $[D(e_3), e_4] + [e_3, D(e_4)] = 0$ forces $R_{14} = R_{24} = 0$. Hence, the theorem's results are obtained.
\end{proof}
\subsection{The Lie algebra $\mathfrak{g}_{3.5}^{\alpha} \oplus \mathfrak{g}_1$}
The Lie algebra $\mathfrak{g}_{3.5}^{\alpha} \oplus \mathfrak{g}_1$ ($\alpha > 0$) has a basis $\mathcal{B} = \left\lbrace e_1, e_2, e_3, e_4\right\rbrace$ such that the nonzero Lie brackets are $$[e_2, e_3] = e_1 - \alpha e_2, \qquad [e_3, e_1] = \alpha e_1 + e_2.$$
\begin{theorem}
A pseudo-Riemannian inner product on $\mathfrak{g}_{3.5}^{\alpha} \oplus \mathfrak{g}_1$ is an algebraic Ricci soliton precisely when the associated Ricci operator satisfies the conditions
\begin{multicols}{2}
\begin{itemize}
\item $R_{31} = R_{32} = R_{i4} = 0, \; i = 1, 2, 3$
\item $R_{21} = -R_{12}$
\item $R_{42} = \alpha R_{41}$
\item $R_{22} = R_{11}$
\end{itemize}
\end{multicols}
Under the stated conditions, the Ricci operator can be expressed as $\operatorname{Ric} = \eta I_4 + D$ where $\eta = R_{33}$ and
$$D = \begin{bmatrix}
R_{11} - R_{33} & R_{12} & R_{13} & 0\\
\\
-R_{12} & R_{11} - R_{33} & R_{23} & 0\\
\\
0 & 0 & 0 & 0\\
\\
R_{41} & \alpha R_{41} & R_{43} & R_{44} - R_{33}
\end{bmatrix}$$
is a derivation of $\mathfrak{g}_{3.5}^{\alpha} \oplus \mathfrak{g}_1$ with respect to the basis $\mathcal{B}$. Note that if $R_{41}  = 0$, there is no additional condition over $\alpha$. But if $R_{41} \neq 0$, then $\alpha = 1$.
\end{theorem}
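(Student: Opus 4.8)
The plan is to follow the same scheme as in the previous cases: set $D = \operatorname{Ric} - \eta I_4$ as in (\ref{derivation}), and impose the Leibniz rule $D[e_i,e_j] = [D e_i, e_j] + [e_i, D e_j]$ on a generating set of brackets, reading off the coefficients of $e_1,e_2,e_3,e_4$ each time. Since the only structurally nontrivial brackets live in the $\mathfrak{g}_{3.5}^{\alpha}$-factor, I would start with the two relations $[e_2,e_3] = e_1 - \alpha e_2$ and $[e_3,e_1] = \alpha e_1 + e_2$, and treat the brackets involving the central generator $e_4$ afterwards.

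First I would expand $D[e_2,e_3] = D(e_1) - \alpha D(e_2)$ against $[D e_2,e_3]+[e_2,D e_3]$. Using $[e_1,e_3] = -\alpha e_1 - e_2$, the $e_1$-coefficient should collapse to $R_{11} = R_{22}+R_{33}-\eta$, i.e. $\eta = R_{22}+R_{33}-R_{11}$; the $e_2$-coefficient should express $R_{12}+R_{21}$ as a multiple of $\alpha(R_{33}-\eta)$; and the $e_3$- and $e_4$-coefficients, which arise only from the right-hand side $D$ applied to the bracket, should give the pair $R_{31}=\alpha R_{32}$ and $R_{41}=\alpha R_{42}$. Repeating verbatim with $[e_3,e_1]$ produces the dual system: a second expression for $\eta$, the opposite sign in the $R_{12}+R_{21}$ relation, and a second pair of relations again coupling $(R_{31},R_{32})$ and $(R_{41},R_{42})$.

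Combining the two brackets is the heart of the argument. The two determinations of $R_{12}+R_{21}$ together with $\alpha>0$ should force $R_{33}=\eta$, hence $\eta = R_{33}$ and $R_{21}=-R_{12}$, while the two expressions for $\eta$ give $R_{22}=R_{11}$. The delicate point is the pair of $2\times 2$ linear systems governing $(R_{31},R_{32})$ and $(R_{41},R_{42})$: each is obtained by substituting the relation coming from one bracket into the relation coming from the other, so that whether the coefficient matrix is invertible---and hence whether these components are forced to vanish or may persist as a one-parameter family---depends entirely on $\alpha$. This is exactly the step that must yield $R_{31}=R_{32}=0$ on the one hand and the stated relation $R_{42}=\alpha R_{41}$ together with the dichotomy ``$R_{41}\neq 0 \Rightarrow \alpha=1$'' on the other, so I expect the careful sign- and $\alpha$-bookkeeping here to be the main obstacle.

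Finally I would impose the Leibniz rule on the brackets with $e_4$. The identity $[e_3,e_4]=0$ expands, via $[e_3,e_1]$ and $[e_3,e_2]=-e_1+\alpha e_2$, to a coupled system in $(R_{14},R_{24})$ of the same shape as above, forcing $R_{14}=R_{24}=0$, while $[e_1,e_4]=0$ (using $[e_1,e_3]=-\alpha e_1-e_2$) isolates $R_{34}=0$. The remaining brackets $[e_1,e_2]=0$ and $[e_2,e_4]=0$ should then be satisfied automatically, so that $D$ reduces to the asserted matrix and $\operatorname{Ric}=\eta I_4 + D$ holds, completing the proof.
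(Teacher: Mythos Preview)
Your overall scheme matches the paper's, but there is a real gap at the ``delicate point'' you flag. The $e_3$- and $e_4$-coefficients of the Leibniz identities for $[e_2,e_3]$ and $[e_3,e_1]$ produce \emph{identical} $2\times 2$ linear systems for $(R_{31},R_{32})$ and for $(R_{41},R_{42})$: from $[e_2,e_3]$ one gets $R_{31}=\alpha R_{32}$ and $R_{41}=\alpha R_{42}$, and from $[e_3,e_1]$ one gets the same type of relation with the roles of the two entries swapped, in each case with the \emph{same} coefficients. So the two nontrivial brackets alone cannot distinguish the third row from the fourth row, and your expectation that these two brackets already force $R_{31}=R_{32}=0$ while leaving $(R_{41},R_{42})$ a one-parameter family is unfounded.

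The separation comes from the bracket you dismiss as automatic, namely $[e_1,e_2]=0$. In the paper's proof this is used explicitly: $[D(e_1),e_2]+[e_1,D(e_2)]$ reduces to $R_{31}[e_3,e_2]+R_{32}[e_1,e_3] = (-R_{31}-\alpha R_{32})e_1+(\alpha R_{31}-R_{32})e_2$, giving a system with determinant $1+\alpha^2\neq 0$, hence $R_{31}=R_{32}=0$ for every $\alpha>0$. The point is that $e_4$ is central, so the $R_{41},R_{42}$ terms contribute nothing to this identity; that is the source of the asymmetry between the third and fourth rows. You should therefore move $[e_1,e_2]=0$ from ``automatic'' to ``essential'' and extract $R_{31}=R_{32}=0$ from it; the rest of your outline (the $[e_1,e_4]$ and $[e_3,e_4]$ brackets for $R_{34}$ and $R_{14}=R_{24}=0$) is correct and agrees with the paper.
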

\begin{proof}
Being a derivation of $\mathfrak{g}_{3.5}^{\alpha} \oplus \mathfrak{g}_1$, $D$ in (\ref{derivation}) satisfies the following condition
\begin{eqnarray*}
& [D(e_2), e_3] + [e_2, D(e_3)] = D[e_2, e_3] \hspace{9.25cm}\\ \Leftrightarrow & \left[ R_{12}e_1 + \left( R_{22} - \eta\right)e_2 + R_{32}e_3 + R_{42}e_4, e_3\right] + \hspace{7.5cm}\\  & \left[ e_2, R_{13}e_1 + R_{23}e_2 + \left(R_{33} - \eta\right)e_3 + R_{43}e_4\right] = D(e_1) - \alpha D(e_2) \hspace{4.6cm}\\
\Leftrightarrow&  R_{12}(-\alpha e_1 - e_2) + \left( R_{22} - \eta\right)(e_1 - \alpha e_2) + \left( R_{33} - \eta\right)(e_1 - \alpha e_2) = \hspace{4cm}\\ & \left(R_{11} - \eta - \alpha R_{12}\right)e_1 + \left(R_{21} - \alpha\left( R_{22} - \eta\right)\right)e_2 + \left(R_{31} - \alpha R_{32}\right)e_3 + \left(R_{41} - \alpha R_{42}\right)e_4\hspace{1cm}
\end{eqnarray*}
From the components of $e_i$, we obtain
$$\left\lbrace\begin{array}{lll}
-\alpha R_{12} + R_{22} - \eta + R_{33} - \eta = R_{11} - \eta - \alpha R_{12} \Longrightarrow \eta = R_{22} + R_{33} - R_{11} \\
\\
-R_{12} - \alpha \left(R_{22} - \eta\right) - \alpha\left(R_{33} - \eta\right) = R_{21} - \alpha\left(R_{22} - \eta\right) \Longrightarrow  \alpha\left(R_{33} - \eta\right) = -(R_{12} + R_{21})\\
\\
R_{31} = \alpha R_{32}, \qquad R_{41} = \alpha R_{42}
\end{array}\right.$$
Similarly, $D$ satisfies the following condition
\begin{eqnarray*}
& [D(e_3), e_1] + [e_3, D(e_1)] = D[e_3, e_1] \hspace{9.6cm}\\ \Leftrightarrow & \left[ R_{13}e_1 + R_{23}e_2 + \left(R_{33} - \eta\right)e_3 + R_{43}e_4, e_1\right] + \hspace{7.8cm}\\  & \left[e_3, \left(R_{11} - \eta\right)e_1 + R_{21}e_2 + R_{31}e_3 + R_{41}e_4\right] = \alpha D(e_1) + D(e_2) \hspace{4.9cm}\\
\Leftrightarrow& \left( R_{33} - \eta\right)(\alpha e_1 + e_2) + \left( R_{11} - \eta\right)(\alpha e_1 + e)_2 + R_{21}(\alpha e_2 - e_1) = \hspace{4.5cm}\\ &  \left(\alpha\left(R_{11} - \eta\right) + R_{12}\right)e_1 + \left(\alpha R_{21} + \left( R_{22} - \eta\right)\right)e_2 + \left(\alpha R_{31} + R_{32}\right)e_3 + \left(\alpha R_{41} + R_{42}\right)e_4\hspace{0.7cm}
\end{eqnarray*}
By identifying each component of $e_i$, we deduce that
$$\left\lbrace\begin{array}{lll}
\alpha\left(R_{33} - \eta\right) + \alpha\left(R_{11} - \eta\right) - R_{21} = \alpha\left(R_{11} - \eta\right) + R_{12} \Longrightarrow \alpha\left(R_{33} - \eta\right) = R_{12} + R_{21}  \\
\\
R_{33} - \eta + R_{11} - \eta + \alpha R_{21} = \alpha R_{21} + R_{22} - \eta \Longrightarrow \eta = R_{33} + R_{11} - R_{22}  \\
\\
\alpha R_{31} = R_{32}, \qquad \alpha R_{41} = R_{42}
\end{array}\right.$$
From the two bracket conditions as above, we see that
$$\alpha\left(R_{33} - \eta\right) = R_{12} + R_{21} = -(R_{12} + R_{21}) \Longrightarrow R_{21} = -R_{12} \quad\text{and}\quad \eta = R_{33}.$$
Since $\eta = R_{33} = R_{33} + R_{11} - R_{22} = R_{22} + R_{33} - R_{11}$, then $R_{22} = R_{11}$.
Then the derivation $D$ becomes
$$D = \begin{bmatrix}
R_{11} - R_{33} & R_{12} & R_{13} & R_{14}\\
\\
-R_{12} & R_{11} - R_{33} & R_{23} & R_{24}\\
\\
R_{31} & R_{32} & 0 & R_{34}\\
\\
R_{41} & R_{42} & R_{43} & R_{44} - R_{33}
\end{bmatrix}.$$
Once again, the following condition holds
\begin{eqnarray*}
& [D(e_1), e_2] + [e_1, D(e_2)] = D[e_1, e_2] \hspace{2.5cm}\\ \Leftrightarrow & \left[ \left(R_{11} - R_{33}\right)e_1 - R_{12}e_2 + R_{31}e_3 + R_{41}e_4, e_2\right] + \hspace{0.3cm}\\  & \left[e_1, R_{12}e_1 + \left(R_{11} - R_{33}\right)e_2 + R_{32}e_3 + R_{42}e_4\right] = 0 \\
\Leftrightarrow& R_{31}(\alpha e_2 - e_1) + R_{32}(-\alpha e_1 - e_2) = 0\hspace{2cm}\\
\Leftrightarrow& \left( -R_{31} - \alpha R_{32}\right)e_1 + \left( \alpha R_{31} - R_{32}\right)e_2 = 0\hspace{1.2cm}
\end{eqnarray*}
This shows that $R_{31} = -\alpha R_{32}$ and $\alpha R_{31} = R_{32}$. Hence $\alpha R_{31} = -\alpha^2 R_{32} = R_{32}$, thus $R_{32} = R_{31} = 0$. Now, it is easy to see that the property $[D(e_1), e_4] + [e_1, D(e_4)] = 0$ gives $R_{34} = 0$. Since $R_{34} = 0$, the property $[D(e_2), e_4] + [e_2, D(e_4)] = 0$ is trivial. Next, $D$ satisfies the following rule
\begin{eqnarray*}
[D(e_3), e_4] + [e_3, D(e_4)] = D[e_3, e_4] &\Leftrightarrow& \left[e_3, R_{14}e_1 + R_{24}e_2 + \left(R_{44} - R_{33}\right)e_4\right] = 0 \\
&\Leftrightarrow& R_{14}(\alpha e_1 + e_2) + R_{24}(\alpha e_2 - e_1) = 0\\
&\Leftrightarrow& \left( \alpha R_{14} - R_{24}\right)e_1 + \left( R_{14} + \alpha R_{24}\right)e_2 = 0
\end{eqnarray*}
This shows that $\alpha R_{14} = R_{24}$ and $R_{14} = -\alpha R_{24}$. Hence $\alpha R_{14} = -\alpha^2 R_{24} = R_{24}$, thus $R_{24} = R_{14} = 0$. Note that from the two first bracket conditions, we get that:\\ $\alpha R_{42} = \alpha^2R_{41} = R_{41}$. If $R_{41} = 0$, there is no additional condition over $\alpha$. But if $R_{41} \neq 0$, then $\alpha = 1$. Therefore, we have reached the results guaranteed by the theorem.
\end{proof}
\subsection{The Lie algebra $\mathfrak{g}_{3.6} \oplus \mathfrak{g}_1$}
The Lie algebra $\mathfrak{g}_{3.6} \oplus \mathfrak{g}_1$ has a basis $\mathcal{B} = \left\lbrace e_1, e_2, e_3, e_4\right\rbrace$ such that the nonzero Lie brackets are $$[e_2, e_3] = e_1, \qquad [e_3, e_1] = e_2, \qquad [e_1, e_2] = -e_3.$$
\begin{theorem}
A pseudo-Riemannian inner product on $\mathfrak{g}_{3.6} \oplus \mathfrak{g}_1$ is an algebraic Ricci soliton if its Ricci operator satisfies the following identities
\begin{multicols}{2}
\begin{itemize}
\item $R_{i4} = R_{4i} = 0, \; i = 1, 2, 3$
\item $R_{21} = -R_{12}$
\item $R_{31} = R_{13}$
\item $R_{32} = R_{23}$
\item $R_{33} = R_{22} = R_{11}$
\end{itemize}
\end{multicols}
Provided these identities are satisfied, the Ricci operator can take the form\\ $\operatorname{Ric} = \eta I_4 + D$ where $\eta = R_{33} = R_{22} = R_{11}$ and
$$D = \begin{bmatrix}
0 & R_{12} & R_{13} & 0\\
\\
-R_{12} & 0 & R_{23} & 0\\
\\
R_{13} & R_{23} & 0 & 0\\
\\
0 & 0 & 0 & R_{44} - R_{11}
\end{bmatrix}$$
is a derivation of $\mathfrak{g}_{3.6} \oplus \mathfrak{g}_1$ with respect to the basis $\mathcal{B}$.
\end{theorem}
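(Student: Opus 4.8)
The plan is to use, exactly as in the previous theorems, that the matrix $D$ displayed in (\ref{derivation}) must be a derivation of $\mathfrak{g}_{3.6} \oplus \mathfrak{g}_1$, and to impose the Leibniz rule $D[e_i,e_j] = [D(e_i),e_j] + [e_i,D(e_j)]$ on a spanning set of brackets. Here the summand spanned by $\{e_1,e_2,e_3\}$ is simple, with the three cyclic relations $[e_2,e_3]=e_1$, $[e_3,e_1]=e_2$, $[e_1,e_2]=-e_3$, while $e_4$ is central, so $[e_i,e_4]=0$ for $i=1,2,3$. It therefore suffices to test these six brackets. For each one I would substitute the columns $D(e_1),\dots,D(e_4)$ read off from (\ref{derivation}), expand both sides in the basis $\mathcal{B}$ using the structure constants, and then match the coefficients of $e_1,e_2,e_3,e_4$ separately.

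First I would treat the two brackets $[e_2,e_3]=e_1$ and $[e_3,e_1]=e_2$. Matching the $e_1$-coefficient of the first and the $e_2$-coefficient of the second yields the two expressions $\eta = R_{22}+R_{33}-R_{11}$ and $\eta = R_{33}+R_{11}-R_{22}$; the remaining coefficients give $R_{21}=-R_{12}$, $R_{31}=R_{13}$, $R_{41}=0$ from the first and $R_{32}=R_{23}$, $R_{42}=0$ from the second. Equating the two values of $\eta$ immediately forces $R_{11}=R_{22}$ and $\eta=R_{33}$. I would then feed the third bracket $[e_1,e_2]=-e_3$ through the same procedure: its off-diagonal coefficients only reconfirm $R_{21}=-R_{12}$, $R_{31}=R_{13}$, $R_{32}=R_{23}$ and additionally give $R_{43}=0$, while its $e_3$-coefficient produces $\eta = R_{11}+R_{22}-R_{33}$, which combined with $\eta=R_{33}$ and $R_{11}=R_{22}$ closes the chain to $R_{11}=R_{22}=R_{33}=\eta$.

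It remains to kill the mixed entries involving $e_4$. Here I would impose $[D(e_i),e_4]+[e_i,D(e_4)]=0$ for $i=1,2,3$. Since $e_4$ is central, $[D(e_i),e_4]=0$, so each condition reduces to $[e_i,D(e_4)]=0$; expanding $D(e_4)=R_{14}e_1+R_{24}e_2+R_{34}e_3+(R_{44}-\eta)e_4$ and using the three cyclic brackets then forces $R_{14}=R_{24}=R_{34}=0$. Together with $R_{41}=R_{42}=R_{43}=0$ obtained above, this gives $R_{i4}=R_{4i}=0$ for $i=1,2,3$, and substituting all the relations into (\ref{derivation}) reproduces the displayed derivation.

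The computation is entirely elementary; the only point requiring care is that the three brackets of the simple summand are not independent and generate overlapping constraints, so one must combine all three $\eta$-equations to pin down both $\eta$ and the full diagonal equality $R_{11}=R_{22}=R_{33}$. Conceptually there is no real obstacle: because $\{e_1,e_2,e_3\}$ spans a simple Lie algebra, every derivation of it is inner and hence lies in the three-dimensional space spanned by $\operatorname{ad}_{e_1},\operatorname{ad}_{e_2},\operatorname{ad}_{e_3}$, which is precisely the trace-free block with antisymmetric $(1,2)$-entries and symmetric $(1,3)$- and $(2,3)$-entries found above; the centrality of $e_4$ then decouples the last row and column.
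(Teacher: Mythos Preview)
Your proposal is correct and follows essentially the same approach as the paper: you impose the Leibniz rule on the three cyclic brackets of the simple summand to obtain the three expressions for $\eta$ and the (anti)symmetry relations on the off-diagonal entries, combine them to get $\eta=R_{11}=R_{22}=R_{33}$ and $R_{41}=R_{42}=R_{43}=0$, and then use the centrality of $e_4$ to reduce the conditions $[D(e_i),e_4]+[e_i,D(e_4)]=0$ to $[e_i,D(e_4)]=0$, killing $R_{14},R_{24},R_{34}$. The only difference from the paper is cosmetic (you combine the first two $\eta$-equations before handling the third bracket, whereas the paper writes out all three and then sums pairs), and your closing remark about inner derivations of the simple factor is a nice conceptual gloss that the paper does not make explicit.
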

\begin{proof}
As $D$ in (\ref{derivation}) serves as a derivation of $\mathfrak{g}_{3.6} \oplus \mathfrak{g}_1$, the following property holds
\begin{eqnarray*}
& [D(e_2), e_3] + [e_2, D(e_3)] = D[e_2, e_3] \hspace{2.8cm}\\ \Leftrightarrow & \left[ R_{12}e_1 + \left( R_{22} - \eta\right)e_2 + R_{32}e_3 + R_{42}e_4, e_3\right] + \hspace{1.1cm}\\ & \left[ e_2, R_{13}e_1 + R_{23}e_2 + \left(R_{33} - \eta\right)e_3 + R_{43}e_4\right] = D(e_1) \\
\Leftrightarrow&  -R_{12}e_2 + \left(R_{22} - \eta\right)e_1 + R_{13}e_3 + \left( R_{33} - \eta\right)e_1 \hspace{0.9cm}\\ & = \left(R_{11} - \eta\right)e_1 + R_{21}e_2 + R_{31}e_3 + R_{41}e_4\hspace{1.9cm}
\end{eqnarray*}
By identifying the components of $e_i$, we obtain that
$$\left\lbrace\begin{array}{lll}
R_{22} - \eta + R_{33} - \eta = R_{11} - \eta \Longrightarrow \eta = R_{22} + R_{33} - R_{11} \\
\\
-R_{12} = R_{21},\qquad R_{13} = R_{31}, \qquad R_{41} = 0
\end{array}\right.$$
As before, $D$ meets the following property
\begin{eqnarray*}
& [D(e_3), e_1] + [e_3, D(e_1)] = D[e_3, e_1] \hspace{2.8cm}\\ \Leftrightarrow & \left[ R_{13}e_1 + R_{23}e_2 + \left( R_{33} - \eta\right)e_3 + R_{43}e_4, e_1\right] + \hspace{1.1cm}\\ & \left[ e_3, \left(R_{11} - \eta\right)e_1 + R_{21}e_2 + R_{31}e_3 + R_{41}e_4\right] = D(e_2) \\
\Leftrightarrow&  R_{23}e_3 + \left(R_{33} - \eta\right)e_2 + \left( R_{11} - \eta\right)e_2 - R_{21}e_1 \hspace{1.2cm}\\ & = R_{12}e_1 +  \left(R_{22} - \eta\right)e_2 + R_{32}e_3 + R_{42}e_4\hspace{1.9cm}
\end{eqnarray*}
By examining the components of $e_i$, we find that
$$\left\lbrace\begin{array}{lll}
R_{33} - \eta + R_{11} - \eta = R_{22} - \eta \Longrightarrow \eta = R_{33} + R_{11} - R_{22} \\
\\
-R_{21} = R_{12},\qquad R_{23} = R_{32}, \qquad R_{42} = 0
\end{array}\right.$$
The following rule is once again satisfied by $D$
\begin{eqnarray*}
& [D(e_1), e_2] + [e_1, D(e_2)] = D[e_1, e_2] \hspace{3.15cm}\\ \Leftrightarrow & \left[ \left( R_{11} - \eta\right)e_1 + R_{21}e_2 + R_{31}e_3 + R_{41}e_4, e_2\right] + \hspace{1.43cm}\\ & \left[ e_1, R_{12}e_1 + \left( R_{22} - \eta\right)e_2 + R_{32}e_3 + R_{42}e_4\right] = -D(e_3) \\
\Leftrightarrow&  -\left(R_{11} - \eta\right)e_3 - R_{31}e_1 - \left( R_{22} - \eta\right)e_3 - R_{32}e_2 \hspace{1.2cm}\\ & = -R_{13}e_1 - R_{23}e_2 -  \left(R_{33} - \eta\right)e_3 - R_{43}e_4\hspace{1.9cm}
\end{eqnarray*}
Identifying the components of $e_i$ yields
$$\left\lbrace\begin{array}{lll}
R_{11} - \eta + R_{22} - \eta = R_{33} - \eta \Rightarrow \eta = R_{11} + R_{22} - R_{33} \\
\\
R_{31} = R_{13},\qquad R_{32} = R_{23}, \qquad R_{43} = 0
\end{array}\right.$$
 The three conditions over the constant $\eta$ are
 \begin{align}
 \eta &= R_{22} + R_{33} - R_{11} \label{eq1} \\
 \eta &= R_{33} + R_{11} - R_{22} \label{eq2} \\
 \eta &= R_{11} + R_{22} - R_{33} \label{eq3} 
 \end{align}
 We see that $(\ref{eq1}) + (\ref{eq2}) \Rightarrow \eta = R_{33}$, then $R_{22} = R_{11}$. Similarly $(\ref{eq1}) + (\ref{eq3}) \Rightarrow \eta = R_{22}$, then $R_{33} = R_{11}$. Hence $\eta = R_{33} = R_{22} = R_{11}$. Next, the following condition holds
 \begin{eqnarray*}
 \left[ D(e_1), e_4\right] + \left[ e_1, D(e_4)\right] = 0 &\Leftrightarrow& \left[ e_1, R_{14}e_1 + R_{24}e_2 + R_{34}e_3 + (R_{44} - \eta)e_4\right] = 0\\
 &\Leftrightarrow& -R_{24}e_3 - R_{34}e_2 = 0 \Leftrightarrow R_{24} = R_{34} = 0
 \end{eqnarray*}
 Similarly, the condition $ \left[ D(e_2), e_4\right] + \left[ e_2, D(e_4)\right] = 0$ gives $R_{14} = 0$. Since:\\ $R_{14} = R_{24} = R_{34} = 0$, then the condition $ \left[ D(e_3), e_4\right] + \left[ e_3, D(e_4)\right] = 0$ is trivial. As a consequence, the outcomes described in the theorem are now established.
\end{proof}
\subsection{The Lie algebra $\mathfrak{g}_{3.7} \oplus \mathfrak{g}_1$}
The Lie algebra $\mathfrak{g}_{3.7} \oplus \mathfrak{g}_1$ has a basis $\mathcal{B} = \left\lbrace e_1, e_2, e_3, e_4\right\rbrace$ such that the nonzero Lie brackets are $$[e_2, e_3] = e_1, \qquad [e_3, e_1] = e_2, \qquad [e_1, e_2] = e_3.$$
\begin{theorem}
A pseudo-Riemannian inner product on $\mathfrak{g}_{3.7} \oplus \mathfrak{g}_1$ is an algebraic Ricci soliton if its Ricci operator satisfies
\begin{multicols}{2}
\begin{itemize}
\item $R_{i4} = R_{4i} = 0, \; i = 1, 2, 3$
\item $R_{21} = -R_{12}$
\item $R_{31} = -R_{13}$
\item $R_{32} = -R_{23}$
\item $R_{33} = R_{22} = R_{11}$
\end{itemize}
\end{multicols}
Under the stated conditions, the Ricci operator can be expressed as $\operatorname{Ric} = \eta I_4 + D$ where $\eta = R_{33} = R_{22} = R_{11}$ and
$$D = \begin{bmatrix}
0 & R_{12} & R_{13} & 0\\
\\
-R_{12} & 0 & R_{23} & 0\\
\\
-R_{13} & -R_{23} & 0 & 0\\
\\
0 & 0 & 0 & R_{44} - R_{11}
\end{bmatrix}$$
is a derivation of $\mathfrak{g}_{3.7} \oplus \mathfrak{g}_1$ with respect to the basis $\mathcal{B}$.
\end{theorem}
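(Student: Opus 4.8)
The plan is to follow the same strategy used for the preceding decomposable algebras: I would start from the general derivation $D$ in (\ref{derivation}) and impose the Leibniz rule on each nonzero bracket of $\mathfrak{g}_{3.7} \oplus \mathfrak{g}_1$, then read off the constraints on the entries $R_{ij}$ and on $\eta$ by comparing coefficients of the basis vectors. Since $\mathfrak{g}_{3.7}$ and $\mathfrak{g}_{3.6}$ are the two three-dimensional simple real Lie algebras, differing only in the sign $[e_1,e_2] = +e_3$ versus $[e_1,e_2]=-e_3$, this proof should run almost word-for-word parallel to the $\mathfrak{g}_{3.6} \oplus \mathfrak{g}_1$ case, with the sign change propagating into a few of the off-diagonal relations.

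First I would treat the three ``rotational'' brackets cyclically. Expanding $[D(e_2),e_3] + [e_2,D(e_3)] = D(e_1)$ and matching the coefficient of $e_1$ gives $\eta = R_{22}+R_{33}-R_{11}$, while the $e_2,e_3,e_4$ coefficients give $R_{21} = -R_{12}$, $R_{31} = -R_{13}$, and $R_{41}=0$. Treating $[D(e_3),e_1] + [e_3,D(e_1)] = D(e_2)$ the same way yields $\eta = R_{33}+R_{11}-R_{22}$ together with $R_{32} = -R_{23}$ and $R_{42}=0$, and $[D(e_1),e_2] + [e_1,D(e_2)] = D(e_3)$ yields $\eta = R_{11}+R_{22}-R_{33}$ together with $R_{43}=0$, the relations $R_{21}=-R_{12}$, $R_{31}=-R_{13}$, $R_{32}=-R_{23}$ reappearing consistently.

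Next I would combine the three expressions for $\eta$. Adding the first two gives $2\eta = 2R_{33}$, hence $\eta = R_{33}$ and $R_{22}=R_{11}$; adding the first and third gives $\eta = R_{22}$, hence $R_{33}=R_{11}$. Therefore $\eta = R_{11}=R_{22}=R_{33}$, which collapses the three diagonal entries of $D$ in the simple block to zero. Finally, since $e_4$ is central, the conditions $[D(e_i),e_4] + [e_i,D(e_4)] = 0$ for $i=1,2,3$ reduce to $[e_i,D(e_4)]=0$; expanding these forces $R_{24}=R_{34}=0$, then $R_{14}=0$, after which the third condition is automatically satisfied. Assembling all of this gives exactly the listed relations and the stated form of $D$.

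I do not expect a genuine obstacle here, since the computation is linear and mirrors the $\mathfrak{g}_{3.6}$ proof. The one point demanding care is sign bookkeeping: because $\mathfrak{g}_{3.7}$ differs from $\mathfrak{g}_{3.6}$ only in the sign of $[e_1,e_2]$, the cross-terms $R_{13}[e_2,e_1]$ and $R_{23}[e_2,e_1]$ must be tracked precisely so that they produce the genuinely antisymmetric relations $R_{31}=-R_{13}$ and $R_{32}=-R_{23}$ rather than the symmetric ones $R_{31}=R_{13}$, $R_{32}=R_{23}$ obtained for $\mathfrak{g}_{3.6}$. Checking that these necessary conditions are also sufficient is immediate, since once the constraints hold the matrix $D$ satisfies the Leibniz rule on every bracket by construction.
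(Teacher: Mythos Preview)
Your proposal is correct and follows essentially the same route as the paper: you apply the Leibniz rule to the three cyclic brackets to extract the three expressions for $\eta$ and the antisymmetry relations with $R_{41}=R_{42}=R_{43}=0$, combine them to force $\eta=R_{11}=R_{22}=R_{33}$, and then use the $e_4$-brackets to obtain $R_{14}=R_{24}=R_{34}=0$. Your observation that $e_4$ is central, so that $[D(e_i),e_4]$ vanishes automatically, is a nice streamlining of the same step the paper carries out explicitly.
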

\begin{proof}
As $D$ in (\ref{derivation}) serves as a derivation of $\mathfrak{g}_{3.7} \oplus \mathfrak{g}_1$, the following rule holds
\begin{eqnarray*}
& [D(e_2), e_3] + [e_2, D(e_3)] = D[e_2, e_3] \hspace{2.8cm}\\ \Leftrightarrow & \left[ R_{12}e_1 + \left( R_{22} - \eta\right)e_2 + R_{32}e_3 + R_{42}e_4, e_3\right] + \hspace{1.1cm}\\ & \left[ e_2, R_{13}e_1 + R_{23}e_2 + \left(R_{33} - \eta\right)e_3 + R_{43}e_4\right] = D(e_1) \\
\Leftrightarrow&  -R_{12}e_2 + \left(R_{22} - \eta\right)e_1 - R_{13}e_3 + \left( R_{33} - \eta\right)e_1 \hspace{0.9cm}\\ & = \left(R_{11} - \eta\right)e_1 + R_{21}e_2 + R_{31}e_3 + R_{41}e_4\hspace{1.9cm}
\end{eqnarray*}
From the components of $e_i$, it follows that
$$\left\lbrace\begin{array}{lll}
R_{22} - \eta + R_{33} - \eta = R_{11} - \eta \Longrightarrow \eta = R_{22} + R_{33} - R_{11} \\
\\
-R_{12} = R_{21},\qquad -R_{13} = R_{31}, \qquad R_{41} = 0
\end{array}\right.$$
Similarly, the following rule holds
\begin{eqnarray*}
& [D(e_3), e_1] + [e_3, D(e_1)] = D[e_3, e_1] \hspace{2.8cm}\\ \Leftrightarrow & \left[ R_{13}e_1 + R_{23}e_2 + \left( R_{33} - \eta\right)e_3 + R_{43}e_4, e_1\right] + \hspace{1.1cm}\\ & \left[ e_3, \left(R_{11} - \eta\right)e_1 + R_{21}e_2 + R_{31}e_3 + R_{41}e_4\right] = D(e_2) \\
\Leftrightarrow&  -R_{23}e_3 + \left(R_{33} - \eta\right)e_2 + \left( R_{11} - \eta\right)e_2 - R_{21}e_1 \hspace{0.9cm}\\ & = R_{12}e_1 +  \left(R_{22} - \eta\right)e_2 + R_{32}e_3 + R_{42}e_4\hspace{1.9cm}
\end{eqnarray*}
An analysis of the components of $e_i$ shows that
$$\left\lbrace\begin{array}{lll}
R_{33} - \eta + R_{11} - \eta = R_{22} - \eta \Longrightarrow \eta = R_{33} + R_{11} - R_{22} \\
\\
-R_{21} = R_{12},\qquad -R_{23} = R_{32}, \qquad R_{42} = 0
\end{array}\right.$$
The derivation $D$ satisfies again the following rule
\begin{eqnarray*}
& [D(e_1), e_2] + [e_1, D(e_2)] = D[e_1, e_2] \hspace{2.8cm}\\ \Leftrightarrow & \left[ \left( R_{11} - \eta\right)e_1 + R_{21}e_2 + R_{31}e_3 + R_{41}e_4, e_2\right] + \hspace{1.1cm}\\ & \left[ e_1, R_{12}e_1 + \left( R_{22} - \eta\right)e_2 + R_{32}e_3 + R_{42}e_4\right] = D(e_3) \\
\Leftrightarrow&  \left(R_{11} - \eta\right)e_3 - R_{31}e_1 + \left( R_{22} - \eta\right)e_3 - R_{32}e_2 \hspace{1.2cm}\\ & = R_{13}e_1 + R_{23}e_2 + \left(R_{33} - \eta\right)e_3 + R_{43}e_4\hspace{1.9cm}
\end{eqnarray*}
Upon identifying the components of $e_i$, we conclude that
$$\left\lbrace\begin{array}{lll}
R_{11} - \eta + R_{22} - \eta = R_{33} - \eta \Rightarrow \eta = R_{11} + R_{22} - R_{33} \\
\\
-R_{31} = R_{13},\qquad -R_{32} = R_{23}, \qquad R_{43} = 0
\end{array}\right.$$
The three conditions over the constant $\eta$ are
\begin{align}
\eta &= R_{22} + R_{33} - R_{11} \label{eqq1} \\
\eta &= R_{33} + R_{11} - R_{22} \label{eqq2} \\
\eta &= R_{11} + R_{22} - R_{33} \label{eqq3} 
\end{align}
From $(\ref{eqq1}) + (\ref{eqq2})$ we obtain $\eta = R_{33}$, and consequently $R_{22} = R_{11}$. Analogously, $(\ref{eqq1}) + (\ref{eqq3})$ gives $\eta = R_{22}$, leading to $R_{33} = R_{11}$. Hence $\eta = R_{33} = R_{22} = R_{11}$. Next, the following condition holds
\begin{eqnarray*}
\left[ D(e_1), e_4\right] + \left[ e_1, D(e_4)\right] = 0 &\Leftrightarrow& \left[ e_1, R_{14}e_1 + R_{24}e_2 + R_{34}e_3 + (R_{44} - \eta)e_4\right] = 0\\
&\Leftrightarrow& R_{24}e_3 - R_{34}e_2 = 0 \Leftrightarrow R_{24} = R_{34} = 0
\end{eqnarray*}
In a similar manner, the property $ \left[ D(e_2), e_4\right] + \left[ e_2, D(e_4)\right] = 0$ gives $R_{14} = 0$. Since $R_{14} = R_{24} = R_{34} = 0$, then the property $ \left[ D(e_3), e_4\right] + \left[ e_3, D(e_4)\right] = 0$ is trivial. Thus, we have obtained the conclusions of the theorem.
\end{proof}
\section{Pseudo-Riemannian algebraic Ricci solitons on indecomposable four-dimensional Lie algebras}
\subsection{The Lie algebra $\mathfrak{g}_{4.1}$}
The Lie algebra $\mathfrak{g}_{4.1}$ has a basis $\mathcal{B} = \left\lbrace e_1, e_2, e_3, e_4\right\rbrace$ such that the nonzero Lie brackets are 
$$[e_2, e_4] = e_1, \qquad [e_3, e_4] = e_2.$$
\begin{theorem}
A pseudo-Riemannian inner product on $\mathfrak{g}_{4.1}$ is an algebraic Ricci soliton if its Ricci operator fulfills the following relations
\begin{multicols}{2}
\begin{itemize}
\item $R_{i1} = 0, \; i = 2, 3, 4$
\item $R_{32} = R_{42} = R_{43} = 0$
\item $R_{23} = R_{12}$
\item $R_{22} + R_{44} - R_{11} = R_{33} + R_{44} - R_{22}$
\end{itemize}
\end{multicols}
Given these relations, the Ricci operator can be described as $\operatorname{Ric} = \eta I_4 + D$ where $\eta = R_{22} + R_{44} - R_{11} = R_{33} + R_{44} - R_{22}$ and
$$D = \begin{bmatrix}
R_{11} - \eta & R_{12} & R_{13} & R_{14}\\
\\
0 & R_{22} - \eta & R_{12} & R_{24}\\
\\
0 & 0 & R_{33} - \eta & R_{34}\\
\\
0 & 0 & 0 & R_{44} - \eta
\end{bmatrix}.$$
is a derivation of $\mathfrak{g}_{4.1}$ with respect to the basis $\mathcal{B}$.
\end{theorem}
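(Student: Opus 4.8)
The plan is to follow the template established for the previous Lie algebras: take the generic derivation $D$ of (\ref{derivation}), whose columns are $D(e_j) = \sum_i R_{ij}e_i$ with $\eta$ subtracted along the diagonal, and impose the Leibniz rule $D[e_i, e_j] = [D(e_i), e_j] + [e_i, D(e_j)]$ on every pair of basis vectors. Since the only nonzero brackets of $\mathfrak{g}_{4.1}$ are $[e_2, e_4] = e_1$ and $[e_3, e_4] = e_2$, I would first extract constraints from these two relations, then check that the vanishing brackets are either automatic or yield the last remaining condition.

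First I would evaluate $D[e_2, e_4] = D(e_1)$. Using the bracket table, the only surviving terms in $[D(e_2), e_4] + [e_2, D(e_4)]$ come from $(R_{22}-\eta)[e_2,e_4]$, $R_{32}[e_3,e_4]$ and $(R_{44}-\eta)[e_2,e_4]$, producing $(R_{22}+R_{44}-2\eta)e_1 + R_{32}e_2$. Matching against $D(e_1) = (R_{11}-\eta)e_1 + R_{21}e_2 + R_{31}e_3 + R_{41}e_4$ gives $\eta = R_{22}+R_{44}-R_{11}$, together with $R_{21} = R_{32}$ and $R_{31} = R_{41} = 0$.

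Next I would evaluate $D[e_3, e_4] = D(e_2)$ in the same way. The surviving contributions $R_{23}[e_2,e_4]$, $(R_{33}-\eta)[e_3,e_4]$ and $(R_{44}-\eta)[e_3,e_4]$ give $R_{23}e_1 + (R_{33}+R_{44}-2\eta)e_2$; comparing with $D(e_2)$ yields the second expression $\eta = R_{33}+R_{44}-R_{22}$ (hence the stated consistency $R_{22}+R_{44}-R_{11} = R_{33}+R_{44}-R_{22}$), along with $R_{23} = R_{12}$ and $R_{32} = R_{42} = 0$. Combining $R_{32} = 0$ with the earlier $R_{21} = R_{32}$ forces $R_{21} = 0$, completing $R_{i1} = 0$ for $i = 2,3,4$.

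It remains to test the vanishing brackets. Because $e_1$ is central and $D(e_1)$ has collapsed to a multiple of $e_1$, the pairs $[e_1, e_2]$, $[e_1, e_3]$, $[e_1, e_4]$ hold automatically. The one genuinely informative check is $[e_2, e_3] = 0$: after the previous reductions, $[e_2, D(e_3)]$ still contains $R_{43}[e_2, e_4] = R_{43}e_1$, so this bracket forces $R_{43} = 0$. I expect this last point to be the only place requiring attention — the constraint $R_{43} = 0$ is invisible from the two defining relations and emerges only from a bracket that vanishes identically; everything else is routine component matching. Collecting all constraints reproduces precisely the derivation $D$ of the statement.
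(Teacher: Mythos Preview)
Your proposal is correct and follows essentially the same approach as the paper: you impose the Leibniz rule on the two defining brackets $[e_2,e_4]=e_1$ and $[e_3,e_4]=e_2$ to extract $\eta$, the vanishing of $R_{21},R_{31},R_{41},R_{32},R_{42}$, and the relation $R_{23}=R_{12}$, then pick up $R_{43}=0$ from the vanishing bracket $[e_2,e_3]=0$, exactly as the paper does. Your observation that the $[e_1,e_j]$ checks are automatic because $e_1$ is central and $D(e_1)$ is a multiple of $e_1$ is a clean way to phrase what the paper simply calls ``trivial.''
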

\begin{proof}
Since $D$ in (\ref{derivation}) is a derivation of $\mathfrak{g}_{4.1}$, the condition below holds
\begin{eqnarray*}
& [D(e_2), e_4] + [e_2, D(e_4)] = D[e_2, e_4] \hspace{7.2cm}\\ \Leftrightarrow & \left[ R_{12}e_1 + \left( R_{22} - \eta\right)e_2 + R_{32}e_3 + R_{42}e_4, e_4\right] + \hspace{5.4cm}\\ & \left[ e_2, R_{14}e_1 + R_{24}e_2 + R_{34}e_3 + \left(R_{44} - \eta\right)e_4\right] = D(e_1)\hspace{4.3cm}\\
\Leftrightarrow& \left( R_{22} - \eta\right)e_1 + R_{32}e_2 + \left( R_{44} - \eta\right)e_1 = \left( R_{11} - \eta\right)e_1 + R_{21}e_2 + R_{31}e_3 + R_{41}e_4
\end{eqnarray*}
From the components of $e_i$, we obtain
$$\left\lbrace\begin{array}{lll}
R_{22} - \eta + R_{44} - \eta = R_{11} - \eta \Longrightarrow \eta = R_{22} + R_{44} - R_{11} \\
\\
R_{32} = R_{21}, \qquad R_{31} = R_{41} = 0
\end{array}\right.$$
Likewise, the condition below is satisfied
\begin{eqnarray*}
& [D(e_3), e_4] + [e_3, D(e_4)] = D[e_3, e_4] \hspace{7.1cm}\\ \Leftrightarrow & \left[ R_{13}e_1 + R_{23}e_2 + \left( R_{33} - \eta\right)e_3 + R_{43}e_4, e_4\right] +\hspace{5.4cm}\\ & \left[ e_3, R_{14}e_1 + R_{24}e_2 + R_{34}e_3 + \left( R_{44} - \eta\right)e_4\right] = D(e_2)\hspace{4.3cm}\\
\Leftrightarrow& R_{23}e_1 + \left( R_{33} - \eta\right)e_2 + \left( R_{44} - \eta\right)e_2 = R_{12}e_1 + \left( R_{22} - \eta\right)e_2 + R_{32}e_3 + R_{42}e_4 
\end{eqnarray*}
By identifying each component of $e_i$, we deduce that
$$\left\lbrace\begin{array}{lll}
R_{33} - \eta + R_{44} - \eta = R_{22} - \eta \Longrightarrow \eta = R_{33} + R_{44} - R_{22} \\
\\
R_{23} = R_{12}, \qquad R_{32} = R_{42} = 0
\end{array}\right.$$
Then the derivation $D$ becomes
$$D = \begin{bmatrix}
R_{11} - \eta & R_{12} & R_{13} & R_{14}\\
\\
0 & R_{22} - \eta & R_{12} & R_{24}\\
\\
0 & 0 & R_{33} - \eta & R_{34}\\
\\
0 & 0 & R_{43} & R_{44} - \eta
\end{bmatrix}.$$
Now, it is easy to see that the conditions $[D(e_1), e_i] + [e_1, D(e_i)] = 0$, for $i = 2, 3, 4$ are trivial. But the relation $[D(e_2), e_3] + [e_2, D(e_3)] = 0$ gives $R_{43} = 0$. Thus, we have obtained the conclusions of the theorem.
\end{proof}
\subsection{The Lie algebra $\mathfrak{g}_{4.2}^{\alpha}$}
The Lie algebra $\mathfrak{g}_{4.2}^{\alpha}$ ($\alpha \neq 0$) has a basis $\mathcal{B} = \left\lbrace e_1, e_2, e_3, e_4\right\rbrace$ such that the nonzero Lie brackets are 
$$[e_1, e_4] = \alpha e_1, \qquad [e_2, e_4] = e_2, \qquad [e_3, e_4] = e_2 + e_3.$$ 
\begin{theorem}
A pseudo-Riemannian inner product on $\mathfrak{g}_{4.2}^{\alpha}$ is an algebraic Ricci soliton, provided its Ricci operator satisfies the conditions below
\begin{multicols}{2}
\begin{itemize}
\item $R_{32} = R_{4i} = 0, \; i = 1, 2, 3$
\item $R_{21} + R_{31} = \alpha R_{21}$
\item $R_{12} = \alpha R_{12}$
\item $R_{33} = R_{22}$
\item $R_{31} = \alpha R_{31}$
\item $R_{12} + R_{13} = \alpha R_{13}$
\end{itemize}
\end{multicols}
In this situation, the Ricci operator can be written as $\operatorname{Ric} = \eta I_4 + D$ where $\eta = R_{44}$ and
$$D = \begin{bmatrix}
R_{11} - R_{44} & R_{12} & R_{13} & R_{14}\\
\\
R_{21} & R_{22} - R_{44} & R_{23} & R_{24}\\
\\
R_{31} & 0 & R_{22} - R_{44} & R_{34}\\
\\
0 & 0 & 0 & 0
\end{bmatrix}$$
is a derivation of $\mathfrak{g}_{4.2}^{\alpha}$ with respect to the basis $\mathcal{B}$. Note that if $R_{21} = R_{31} = R_{12} = R_{13} = 0$, there is no additional condition over $\alpha$. But if $R_{31} \neq 0$ or $R_{12} \neq 0$, then $\alpha = 1$ which implies that $R_{31} = R_{12} = 0$.
\end{theorem}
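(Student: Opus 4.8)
The plan is to reuse the mechanism of the preceding theorems. By the central remark, the matrix $R=(R_{ij})$ of $\operatorname{Ric}$ in the basis $\mathcal{B}$ may be treated as an arbitrary $4\times 4$ matrix, so the inner product is an algebraic Ricci soliton exactly when there is a scalar $\eta$ for which $D=\operatorname{Ric}-\eta I_4$ from (\ref{derivation}) is a derivation of $\mathfrak{g}_{4.2}^{\alpha}$. Since every nonzero bracket has the shape $[e_i,e_4]$, it suffices to impose the Leibniz rule $D[e_i,e_4]=[D(e_i),e_4]+[e_i,D(e_4)]$ on the three generating brackets and then to verify that the Leibniz rule on the vanishing brackets $[e_1,e_2]=[e_1,e_3]=[e_2,e_3]=0$ is automatic. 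Reading the columns of (\ref{derivation}) gives each $D(e_i)$ explicitly, so every bracket computation reduces to expanding $[\,\cdot\,,e_4]$ and $[e_i,\,\cdot\,]$ and comparing coefficients of $e_1,e_2,e_3,e_4$.

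First I would treat $[e_1,e_4]=\alpha e_1$, which is the cleanest. Expanding $[D(e_1),e_4]+[e_1,D(e_4)]$ and equating with $\alpha D(e_1)$ gives four scalar equations. The $e_1$-coefficient reads $\alpha(R_{11}-\eta)+\alpha(R_{44}-\eta)=\alpha(R_{11}-\eta)$; since $\alpha\neq 0$ this forces $\eta=R_{44}$. The coefficients of $e_2,e_3,e_4$ then give $R_{21}+R_{31}=\alpha R_{21}$, $R_{31}=\alpha R_{31}$, and $\alpha R_{41}=0$ (so $R_{41}=0$).

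Next I would process $[e_2,e_4]=e_2$ and $[e_3,e_4]=e_2+e_3$. The first yields $R_{12}=\alpha R_{12}$ from the $e_1$-coefficient, $R_{42}=0$ from the $e_4$-coefficient, and $R_{32}=0$ from the $e_2$-coefficient once $\eta=R_{44}$ is substituted (the $e_3$-coefficient being trivial). The bracket $[e_3,e_4]=e_2+e_3$ is the least routine, because both $[\,\cdot\,,e_4]$ and $[e_3,\,\cdot\,]$ spill contributions into $e_2$ and $e_3$ simultaneously; careful bookkeeping of these cross terms, together with $\eta=R_{44}$, produces $R_{12}+R_{13}=\alpha R_{13}$ from the $e_1$-coefficient, $R_{33}=R_{22}$ from the $e_2$-coefficient, and $R_{43}=0$ from the $e_4$-coefficient. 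Collecting all these relations reproduces exactly the matrix $D$ of the statement with $\eta=R_{44}$.

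It then remains to confirm that the three zero brackets impose nothing new, which is immediate: after the above, $D(e_1),D(e_2),D(e_3)$ carry no $e_4$-component, so all iterated brackets among $e_1,e_2,e_3$ vanish on both sides. The only genuinely delicate point, and the one I expect to be the main subtlety rather than any single computation, is the concluding dependence on $\alpha$. Combining $R_{31}=(\alpha-1)R_{21}$ with $(\alpha-1)R_{31}=0$, and likewise $R_{12}=(\alpha-1)R_{13}$ with $(\alpha-1)R_{12}=0$, one argues by a short case-split: when $R_{21}=R_{31}=R_{12}=R_{13}=0$ these relations place no constraint on $\alpha$, whereas $R_{31}\neq 0$ or $R_{12}\neq 0$ forces $\alpha=1$, and $\alpha=1$ in turn collapses the first relation of each pair to yield $R_{31}=R_{12}=0$. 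This establishes the parenthetical note of the theorem and completes the argument.
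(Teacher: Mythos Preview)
Your proposal is correct and follows essentially the same approach as the paper: you process the three nonzero brackets $[e_i,e_4]$ in the same order, extract the identical scalar conditions (including $\eta=R_{44}$ from the $e_1$-coefficient of the first bracket, using $\alpha\neq 0$), verify that the vanishing brackets impose nothing further because $D(e_1),D(e_2),D(e_3)$ carry no $e_4$-component, and then carry out the same case-split on $\alpha$. The paper's proof is slightly more explicit in writing out each bracket expansion line by line, but the logic and sequence are the same.
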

\begin{proof}
Given that $D$ in (\ref{derivation}) is a derivation of $\mathfrak{g}_{4.2}^{\alpha}$, the following requirement is met
\begin{eqnarray*}
& [D(e_1), e_4] + [e_1, D(e_4)] = D[e_1, e_4] \hspace{3.4cm}\\ \Leftrightarrow & \left[ \left( R_{11} - \eta\right)e_1 + R_{21}e_2 + R_{31}e_3 + R_{41}e_4, e_4\right] + \hspace{1.7cm}\\ & \left[ e_1, R_{14}e_1 + R_{24}e_2 + R_{34}e_3 + \left(R_{44} - \eta\right)e_4\right] = \alpha D(e_1)\hspace{0.36cm}\\
\Leftrightarrow& \alpha\left(R_{11} - \eta\right)e_1 + R_{21}e_2 + R_{31}(e_2 + e_3) + \alpha\left(R_{44} - \eta\right)e_1 \\ 
& = \alpha\left(R_{11} - \eta\right)e_1 + \alpha R_{21}e_2 + \alpha R_{31}e_3 + \alpha R_{41}e_4 \hspace{1.4cm}
\end{eqnarray*}
By identifying the components of $e_i$, we obtain that
$$\left\lbrace\begin{array}{lll}
R_{11} - \eta + R_{44} - \eta = R_{11} - \eta \Longrightarrow \eta = R_{44} \\
\\
R_{21} + R_{31} = \alpha R_{21}, \qquad R_{31} = \alpha R_{31}, \qquad R_{41} = 0
\end{array}\right.$$
In a similar manner, the following requirement holds
\begin{eqnarray*}
& [D(e_2), e_4] + [e_2, D(e_4)] = D[e_2, e_4] \hspace{8.08cm}\\ \Leftrightarrow & \left[ R_{12}e_1 + \left( R_{22} - \eta\right)e_2 + R_{32}e_3 + R_{42}e_4, e_4\right] + \left[ e_2, R_{14}e_1 + R_{24}e_2 + R_{34}e_3\right] = D(e_2)\\
\Leftrightarrow& \alpha R_{12}e_1 + \left(R_{22} - \eta\right)e_2 + R_{32}(e_2 + e_3) = R_{12}e_1 + \left( R_{22} - \eta\right)e_2 + R_{32}e_3 + R_{42}e_4\hspace{0.5cm}
\end{eqnarray*}
By examining the components of $e_i$, we find that
$$\left\lbrace\begin{array}{lll}
\alpha R_{12} = R_{12}, \qquad R_{42} = 0 \\
\\
R_{22} - \eta + R_{32} = R_{22} - \eta \Longrightarrow R_{32} = 0
\end{array}\right.$$
Again, the following requirement is met
\begin{eqnarray*}
& [D(e_3), e_4] + [e_3, D(e_4)] = D[e_3, e_4] \hspace{4.2cm}\\ \Leftrightarrow & \left[ R_{13}e_1 + R_{23}e_2 + \left( R_{33} - \eta\right)e_3 + R_{43}e_4, e_4\right] +\hspace{2.5cm}\\ & \left[ e_3, R_{14}e_1 + R_{24}e_2 + R_{34}e_3\right] = D(e_2) + D(e_3)\hspace{2.5cm}\\
\Leftrightarrow& \alpha R_{13}e_1 + R_{23}e_2 + \left( R_{33} - \eta\right)(e_2 + e_3) =\hspace{3.2cm}\\ & \left(R_{12} + R_{13}\right)e_1 + \left( R_{22} - \eta + R_{23}\right)e_2 + \left( R_{33} - \eta\right)e_3 + R_{43}e_4
\end{eqnarray*}
Identifying the components of $e_i$ yields
$$\left\lbrace\begin{array}{lll}
\alpha R_{13} = R_{12} + R_{13}, \qquad R_{43} = 0 \\
\\
R_{23} + R_{33} - \eta = R_{22} - \eta + R_{23} \Longrightarrow R_{33} = R_{22}
\end{array}\right.$$
Then the derivation $D$ becomes
$$D = \begin{bmatrix}
R_{11} - R_{44} & R_{12} & R_{13} & R_{14}\\
\\
R_{21} & R_{22} - R_{44} & R_{12} & R_{24}\\
\\
R_{31} & 0 & R_{22} - R_{44} & R_{34}\\
\\
0 & 0 & 0 & 0
\end{bmatrix}.$$
At this stage, it is straightforward to check that the conditions $[D(e_1), e_i] + [e_1, D(e_i)] = 0$, for $i = 2, 3$, and $[D(e_2), e_3] + [e_2, D(e_3)] = 0$, are all trivial. The conditions over the elements $R_{12}, R_{21}, R_{13}$, and $R_{31}$ may be summarized as follows
\begin{multicols}{2}
\begin{itemize}
\item $R_{21} + R_{31} = \alpha R_{21}$
\item $R_{12} = \alpha R_{12}$
\item $R_{31} = \alpha R_{31}$
\item $R_{12} + R_{13} = \alpha R_{13}$
\end{itemize}
\end{multicols}
Hence we see that if $R_{21} = R_{31} = R_{12} = R_{13} = 0$, there is no additional condition over $\alpha$. But if $R_{31} \neq 0$ or $R_{12} \neq 0$, then $\alpha = 1$ which implies that $R_{31} = R_{12} = 0$. Hence, the results asserted by the theorem follow.
\end{proof}
\subsection{The Lie algebra $\mathfrak{g}_{4.3}$}
The Lie algebra $\mathfrak{g}_{4.3}$ has a basis $\mathcal{B} = \left\lbrace e_1, e_2, e_3, e_4\right\rbrace$ such that the nonzero Lie brackets are 
$$[e_1, e_4] = e_1, \qquad [e_3, e_4] = e_2.$$
\begin{theorem}
A pseudo-Riemannian inner product on $\mathfrak{g}_{4.3}$ is an algebraic Ricci soliton precisely when the associated Ricci operator satisfies the conditions
\begin{multicols}{2}
\begin{itemize}
\item $R_{i1} = 0, \; i = 2, 3, 4$
\item $R_{32} = R_{42} = R_{43} = 0$
\item $R_{13} = R_{12} = 0$
\item $R_{33} = R_{22}$
\end{itemize}
\end{multicols}
Given these conditions, the Ricci operator can be described as $\operatorname{Ric} = \eta I_4 + D$ where $\eta = R_{44}$ and
$$D = \begin{bmatrix}
R_{11} - R_{44} & 0 & 0 & R_{14}\\
\\
0 & R_{22} - R_{44} & R_{23} & R_{24}\\
\\
0 & 0 & R_{22} - R_{44} & R_{34}\\
\\
0 & 0 & 0 & 0
\end{bmatrix}$$
is a derivation of $\mathfrak{g}_{4.3}$ with respect to the basis $\mathcal{B}$.
\end{theorem}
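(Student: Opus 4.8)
The plan is to follow the scheme used throughout this section: regard the matrix $D$ of (\ref{derivation}) as an actual derivation of $\mathfrak{g}_{4.3}$, impose the Leibniz rule on the two defining brackets $[e_1,e_4]=e_1$ and $[e_3,e_4]=e_2$, and then tidy up using the vanishing brackets. Throughout, $D(e_j)$ denotes the $j$-th column of $D$.

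First I would expand $[D(e_1),e_4]+[e_1,D(e_4)]=D(e_1)$. Because the only brackets touching $e_4$ are the two defining ones, the left-hand side collapses to $(R_{11}-\eta+R_{44}-\eta)e_1+R_{31}e_2$; matching it against $D(e_1)=(R_{11}-\eta)e_1+R_{21}e_2+R_{31}e_3+R_{41}e_4$ yields $\eta=R_{44}$ together with $R_{31}=R_{21}$, $R_{31}=0$ and $R_{41}=0$, hence $R_{21}=R_{31}=R_{41}=0$. Next I would expand $[D(e_3),e_4]+[e_3,D(e_4)]=D(e_2)$; here the left-hand side becomes $R_{13}e_1+(R_{33}-\eta+R_{44}-\eta)e_2$, and comparing with $D(e_2)$ gives $R_{13}=R_{12}$, $R_{32}=R_{42}=0$, and (using $\eta=R_{44}$) the relation $R_{33}=R_{22}$.

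The step that genuinely requires the trivial brackets is the elimination of $R_{12}$ and $R_{43}$, which the two structure equations alone leave undetermined; I expect this to be the only real obstacle, even though it is computationally light. Imposing $[D(e_2),e_4]+[e_2,D(e_4)]=0$ reduces to $R_{12}e_1=0$, so $R_{12}=0$ and therefore $R_{13}=0$; likewise $[D(e_1),e_3]+[e_1,D(e_3)]=0$ reduces to $R_{43}e_1=0$, giving $R_{43}=0$. The point to notice is that $R_{12}$ is pinned down only against the vanishing bracket $[e_2,e_4]=0$, not by either defining relation.

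Finally I would check that the remaining pairs $[e_1,e_2]=0$ and $[e_2,e_3]=0$ impose no new conditions, so the full list of constraints is $R_{i1}=0$ for $i=2,3,4$, together with $R_{32}=R_{42}=R_{43}=0$, $R_{12}=R_{13}=0$, and $R_{33}=R_{22}$, with $\eta=R_{44}$. Substituting $R_{33}=R_{22}$ into $D$ then produces exactly the matrix displayed in the statement, which completes the proof.
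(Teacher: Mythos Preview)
Your proposal is correct and follows essentially the same approach as the paper's proof: both derive $\eta=R_{44}$ and $R_{21}=R_{31}=R_{41}=0$ from the Leibniz rule on $[e_1,e_4]$, then $R_{13}=R_{12}$, $R_{32}=R_{42}=0$, $R_{33}=R_{22}$ from $[e_3,e_4]$, and finally use the vanishing brackets $[e_1,e_3]=0$ and $[e_2,e_4]=0$ to kill $R_{43}$ and $R_{12}$ respectively. The only difference is the order in which you treat the last two vanishing brackets, which is immaterial.
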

\begin{proof}
As a derivation of $\mathfrak{g}_{4.3}$, $D$ in (\ref{derivation}) satisfies the following property
\begin{eqnarray*}
& [D(e_1), e_4] + [e_1, D(e_4)] = D[e_1, e_4] \hspace{7.2cm}\\ \Leftrightarrow & \left[ \left( R_{11} - \eta\right)e_1 + R_{21}e_2 + R_{31}e_3 + R_{41}e_4, e_4\right] + \hspace{5.4cm}\\ & \left[ e_1, R_{14}e_1 + R_{24}e_2 + R_{34}e_3 + \left(R_{44} - \eta\right)e_4\right] = D(e_1)\hspace{4.3cm}\\
\Leftrightarrow& \left( R_{11} - \eta\right)e_1 + R_{31}e_2 + \left( R_{44} - \eta\right)e_1 = \left( R_{11} - \eta\right)e_1 + R_{21}e_2 + R_{31}e_3 + R_{41}e_4
\end{eqnarray*}
From the components of $e_i$, it follows that
$$\left\lbrace\begin{array}{lll}
R_{11} - \eta + R_{44} - \eta = R_{11} - \eta \Longrightarrow \eta = R_{44} \\
\\
R_{31} = R_{21}, \qquad R_{31} = R_{41} = 0
\end{array}\right.$$
Also, $D$ satisfies the following property
\begin{eqnarray*}
& [D(e_3), e_4] + [e_3, D(e_4)] = D[e_3, e_4] \hspace{8.1cm}\\ \Leftrightarrow & \left[ R_{13}e_1 + R_{23}e_2 + \left( R_{33} - \eta\right)e_3 + R_{43}e_4, e_4\right] + \left[ e_3, R_{14}e_1 + R_{24}e_2 + R_{34}e_3\right] = D(e_2)\\
\Leftrightarrow& R_{13}e_1 + \left( R_{33} - \eta\right)e_2 = R_{12}e_1 + \left( R_{22} - \eta\right)e_2 + R_{32}e_3 + R_{42}e_4 \hspace{3.4cm}
\end{eqnarray*}
An analysis of the components of $e_i$ shows that
$$\left\lbrace\begin{array}{lll}
R_{13} = R_{12}, \qquad R_{32} = R_{42} = 0\\
\\
R_{33} - \eta = R_{22} - \eta \Longrightarrow R_{33} = R_{22}
\end{array}\right.$$
Then the derivation $D$ becomes
$$D = \begin{bmatrix}
R_{11} - R_{44} & R_{12} & R_{12} & R_{14}\\
\\
0 & R_{22} - R_{44} & R_{23} & R_{24}\\
\\
0 & 0 & R_{22} - R_{44} & R_{34}\\
\\
0 & 0 & R_{43} & 0
\end{bmatrix}.$$
Now, one can easily see that the condition $[D(e_1), e_2] + [e_1, D(e_2)] = 0$ is trivial. But the condition $[D(e_1), e_3] + [e_1, D(e_3)] = 0$ gives $R_{43} = 0$. In the same way, the condtion $[D(e_2), e_3] + [e_2, D(e_3)] = 0$ is trivial. But the condition $[D(e_2), e_4] + [e_2, D(e_4)] = 0$ gives $R_{12} = 0$. Consequently, we have obtained all the required implications of the theorem.
\end{proof}
\subsection{The Lie algebra $\mathfrak{g}_{4.4}$}
The Lie algebra $\mathfrak{g}_{4.4}$ has a basis $\mathcal{B} = \left\lbrace e_1, e_2, e_3, e_4\right\rbrace$ such that the nonzero Lie brackets are 
$$[e_1, e_4] = e_1, \qquad [e_2, e_4] = e_1 + e_2, \qquad [e_3, e_4] = e_2 + e_3.$$
\begin{theorem}
A pseudo-Riemannian inner product on $\mathfrak{g}_{4.4}$ is an algebraic Ricci soliton if its Ricci operator satisfies
\begin{multicols}{2}
\begin{itemize}
\item $R_{i1} = 0, \; i = 2, 3, 4$
\item $R_{32} = R_{42} = R_{43} = 0$
\item $R_{23} = R_{12}$
\item $R_{33} = R_{22} = R_{11}$
\end{itemize}
\end{multicols}
Under the stated conditions, the Ricci operator can be expressed as $\operatorname{Ric} = \eta I_4 + D$ where $\eta = R_{44}$ and
$$D = \begin{bmatrix}
R_{11} - R_{44} & R_{12} & R_{13} & R_{14}\\
\\
0 & R_{11} - R_{44} & R_{12} & R_{24}\\
\\
0 & 0 & R_{11} - R_{44} & R_{34}\\
\\
0 & 0 & 0 & 0
\end{bmatrix}$$
is a derivation of $\mathfrak{g}_{4.4}$ with respect to the basis $\mathcal{B}$.
\end{theorem}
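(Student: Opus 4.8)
The plan is to establish the sufficiency asserted by the statement: assuming the Ricci operator obeys the listed relations, I will produce a real number $\eta$ and a derivation $D$ of $\mathfrak{g}_{4.4}$ with $\operatorname{Ric} = \eta I_4 + D$, which is exactly the definition of an algebraic Ricci soliton. The natural guess, consistent with every earlier case, is $\eta = R_{44}$ together with $D = \operatorname{Ric} - R_{44} I_4$. Substituting the hypotheses $R_{21} = R_{31} = R_{41} = 0$, $R_{32} = R_{42} = R_{43} = 0$, $R_{23} = R_{12}$, and $R_{33} = R_{22} = R_{11}$ into the general Ricci matrix collapses $\operatorname{Ric} - R_{44} I_4$ to precisely the matrix $D$ displayed in the statement. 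As $\eta = R_{44}$ is manifestly real, the entire content of the theorem reduces to verifying that this explicit $D$ is a derivation.

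First I would read off the columns of $D$ as $D(e_1) = (R_{11} - R_{44}) e_1$, $D(e_2) = R_{12} e_1 + (R_{11} - R_{44}) e_2$, $D(e_3) = R_{13} e_1 + R_{12} e_2 + (R_{11} - R_{44}) e_3$, and $D(e_4) = R_{14} e_1 + R_{24} e_2 + R_{34} e_3$. Since the Leibniz rule is bilinear in its two arguments, $D$ is a derivation as soon as $D[e_i, e_j] = [D(e_i), e_j] + [e_i, D(e_j)]$ holds on every basis pair $[e_i, e_j]$ with $i < j$. Three of these six pairs, namely those among $e_1, e_2, e_3$, are handled at once: their brackets all vanish, $D(e_1), D(e_2), D(e_3)$ lie in $\operatorname{span}\{e_1, e_2, e_3\}$, and this span is an abelian subalgebra, so both sides of the identity are zero. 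It remains to check the three generators $[e_1, e_4] = e_1$, $[e_2, e_4] = e_1 + e_2$, and $[e_3, e_4] = e_2 + e_3$, which I would verify in that order using the given brackets and their antisymmetric partners.

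The crux, and the only step where every hypothesis is used simultaneously, is the bracket $[e_3, e_4] = e_2 + e_3$. Abbreviating $a = R_{11} - R_{44}$, $b = R_{12}$, $c = R_{13}$, the left-hand side is $D[e_3, e_4] = D(e_2) + D(e_3) = (b + c) e_1 + (a + b) e_2 + a e_3$, while the right-hand side is $[D(e_3), e_4] + [e_3, D(e_4)] = c\, e_1 + b(e_1 + e_2) + a(e_2 + e_3)$, the contribution of $D(e_4)$ vanishing because $[e_3, e_1] = [e_3, e_2] = [e_3, e_3] = 0$. The two sides coincide exactly because the constant-diagonal condition $R_{33} = R_{22} = R_{11}$ forces each $D(e_k)$ to carry the same diagonal entry $a$, and the super-diagonal condition $R_{23} = R_{12}$ aligns the off-diagonal entries along the cascade $e_3 \mapsto e_2 \mapsto e_1$. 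I expect this to be the main obstacle only in bookkeeping: the verifications on $[e_1, e_4]$ and $[e_2, e_4]$ are degenerate truncations of the same computation, so once the $[e_3, e_4]$ identity closes, the relation $\operatorname{Ric} = R_{44} I_4 + D$ with $D$ a derivation completes the proof.
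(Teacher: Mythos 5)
Your proof is correct. You take $\eta = R_{44}$, read $D = \operatorname{Ric} - R_{44}I_4$ off the hypotheses, and verify the Leibniz rule on all six basis pairs: the pairs among $e_1, e_2, e_3$ are trivial since they span an abelian subalgebra preserved by $D$, the contribution of $D(e_4)$ always dies because $e_1, e_2, e_3$ commute, and your $[e_3,e_4]$ computation $(b+c)e_1 + (a+b)e_2 + ae_3 = c\,e_1 + b(e_1+e_2) + a(e_2+e_3)$ closes exactly as you say. However, your route runs in the opposite logical direction from the paper's. The paper never verifies a candidate $D$; it starts from the general matrix $D = \operatorname{Ric} - \eta I_4$ with $\eta$ unknown and all sixteen $R_{ij}$ free, imposes the Leibniz rule on $[e_1,e_4]$, $[e_2,e_4]$, $[e_3,e_4]$, and solves the resulting linear system: the first bracket yields $\eta = R_{21} + R_{44}$ and $R_{31} = R_{41} = 0$; the second yields $\eta = R_{22} + R_{44} - R_{11}$, $\eta = R_{32} + R_{44} - R_{21}$, and $R_{42} = 0$; the third yields $R_{23} = R_{12}$, $\eta = R_{33} + R_{44} - R_{22}$, $\eta = R_{44} - R_{32}$, and $R_{43} = 0$; combining the three expressions for $\eta$ forces $2R_{21} = -R_{21}$, hence $R_{21} = R_{32} = 0$, $\eta = R_{44}$, and $R_{33} = R_{22} = R_{11}$. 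What the paper's solve buys is necessity: since any derivation realizing the soliton equation must equal $\operatorname{Ric} - \eta I_4$, the computation shows the listed conditions are the \emph{only} way this can happen, which is what supports the paper's claim of a complete description. What your verification buys is a shorter and more transparent proof of the statement as literally phrased (an ``if'' claim), with the structural mechanism made visible: the constant diagonal $R_{33} = R_{22} = R_{11}$ and the aligned superdiagonal $R_{23} = R_{12}$ are precisely what make $D$ commute with the nilpotent cascade $e_3 \mapsto e_2 \mapsto e_1$ induced by $\operatorname{ad}_{e_4}$. If you want your argument to carry the full weight of the paper's classification, append the converse by running your same three Leibniz identities as equations in the unknowns $\eta$ and $R_{ij}$, which recovers the listed conditions.
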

\begin{proof}
Because $D$ in (\ref{derivation}) is a derivation of $\mathfrak{g}_{4.4}$, we have
\begin{eqnarray*}
& [D(e_1), e_4] + [e_1, D(e_4)] = D[e_1, e_4] \hspace{3.8cm}\\ \Leftrightarrow & \left[ \left( R_{11} - \eta\right)e_1 + R_{21}e_2 + R_{31}e_3 + R_{41}e_4, e_4\right] + \hspace{2.1cm}\\ & \left[ e_1, R_{14}e_1 + R_{24}e_2 + R_{34}e_3 + \left(R_{44} - \eta\right)e_4\right] = D(e_1)\hspace{1cm}\\
\Leftrightarrow& \left( R_{11} - \eta\right)e_1 + R_{21}(e_1 + e_2) + R_{31}(e_2 + e_3) + \left( R_{44} - \eta\right)e_1\\
& = \left( R_{11} - \eta\right)e_1 + R_{21}e_2 + R_{31}e_3 + R_{41}e_4\hspace{3cm}
\end{eqnarray*}
Upon identifying the components of $e_i$, we conclude that
$$\left\lbrace\begin{array}{lll}
R_{11} - \eta + R_{21} + R_{44} - \eta = R_{11} - \eta \Longrightarrow \eta = R_{21} + R_{44} \\
\\
R_{21} + R_{31} = R_{21} \Longrightarrow R_{31} = 0, \qquad R_{41} = 0
\end{array}\right.$$
By the same argument, we have
\begin{eqnarray*}
& [D(e_2), e_4] + [e_2, D(e_4)] = D[e_2, e_4]\hspace{5cm}\\ \Leftrightarrow & \left[ R_{12}e_1 + \left( R_{22} - \eta\right)e_2 + R_{32}e_3 + R_{42}e_4, e_4\right] +\hspace{3.3cm}\\
& \left[ e_2, R_{14}e_1 + R_{24}e_2 + R_{34}e_3 + \left( R_{44} - \eta\right)e_4\right] = D(e_1) + D(e_2)\hspace{0.7cm}\\
\Leftrightarrow& R_{12}e_1 + \left( R_{22} - \eta\right)(e_1 + e_2) + R_{32}(e_2 + e_3) + \left( R_{44} - \eta\right)(e_1 + e_2)\\
& = \left(R_{11} - \eta + R_{12}\right)e_1 + \left(R_{21} + R_{22} - \eta \right)e_2 + R_{32}e_3 + R_{42}e_4\hspace{1cm}
\end{eqnarray*}
From the components of $e_i$, we obtain
$$\left\lbrace\begin{array}{lll}
R_{12} + R_{22} - \eta  + R_{44} - \eta = R_{11} - \eta + R_{12} \Longrightarrow \eta = R_{22} + R_{44} - R_{11} \\
\\
R_{22} - \eta + R_{32}  + R_{44} - \eta = R_{21} + R_{22} - \eta \Longrightarrow \eta = R_{32} + R_{44} - R_{21} \\
\\
R_{42} = 0
\end{array}\right.$$
Proceeding analogously, we have
\begin{eqnarray*}
& [D(e_3), e_4] + [e_3, D(e_4)] = D[e_3, e_4]\hspace{5.8cm}\\ \Leftrightarrow & \left[ R_{13}e_1 + R_{23}e_2 + \left( R_{33} - \eta\right)e_3 + R_{43}e_4, e_4\right] +\hspace{4.05cm}\\
& \left[ e_3, R_{14}e_1 + R_{24}e_2 + R_{34}e_3 + \left( R_{44} - \eta\right)e_4\right] = D(e_2) + D(e_3)\hspace{1.4cm}\\
\Leftrightarrow& R_{13}e_1 + R_{23}(e_1 + e_2) + \left( R_{33} - \eta\right)(e_2 + e_3) + \left( R_{44} - \eta\right)(e_2 + e_3)\hspace{0.7cm}\\
& = \left(R_{12} + R_{13}\right)e_1 + \left(R_{22} - \eta + R_{23} \right)e_2 + \left(R_{32} + R_{33} - \eta\right)e_3 + R_{43}e_4
\end{eqnarray*}
By identifying each component of $e_i$, we deduce that
$$\left\lbrace\begin{array}{lll}
R_{13} + R_{23} = R_{12} + R_{13} \Longrightarrow R_{23} = R_{12}\\
\\
R_{23} + R_{33} - \eta  + R_{44} - \eta = R_{22} - \eta + R_{23} \Longrightarrow \eta = R_{33} + R_{44} - R_{22} \\
\\
R_{33} - \eta  + R_{44} - \eta = R_{32} + R_{33} - \eta \Longrightarrow \eta = R_{44} - R_{32}\\
\\
R_{43} = 0
\end{array}\right.$$
From the above three bracket conditions, we remark that
\begin{align}
\eta &= R_{32} + R_{44} - R_{21} \label{equ1} \\
\eta &= R_{44} - R_{32} \label{equ2} \\
\eta &= R_{44} + R_{21} \label{equ3} 
\end{align}
We see that $(\ref{equ1}) + (\ref{equ2}) \Rightarrow 2\eta = 2R_{44} - R_{21}$. In the other hand, multiplying the equation (\ref{equ3}) by $2$ reveals that $2\eta = 2R_{44} + 2R_{21}$. Hence
$$2R_{44} + 2R_{21} = 2R_{44} - R_{21} \Rightarrow 2R_{21} = - R_{21} \Rightarrow  R_{21} = 0.$$
Therefore $\eta = R_{44} = R_{44} + R_{32} \Rightarrow R_{32} = 0$. According to this fact and by seeing the above three bracket conditions again, we obtain that
$$\eta = R_{44} = R_{22} + R_{44} - R_{11} = R_{33} + R_{44} - R_{22} \Longrightarrow R_{33} = R_{22} = R_{11}.$$
Thus, the derivation $D$ becomes
$$D = \begin{bmatrix}
R_{11} - R_{44} & R_{12} & R_{13} & R_{14}\\
\\
0 & R_{11} - R_{44} & R_{12} & R_{24}\\
\\
0 & 0 & R_{11} - R_{44} & R_{34}\\
\\
0 & 0 & 0 & 0
\end{bmatrix}.$$
At this stage, it is straightforward to check that the conditions $[D(e_1), e_i] + [e_1, D(e_i)] = 0$, for $i = 2, 3$, and $[D(e_2), e_3] + [e_2, D(e_3)] = 0$, are all trivial. Thus, the theorem's conclusions follow.
\end{proof}
\subsection{The Lie algebra $\mathfrak{g}_{4.5}^{\alpha, \beta}$}
The Lie algebra $\mathfrak{g}_{4.5}^{\alpha, \beta}$ ($-1 < \alpha \leq \beta \leq1, \alpha\beta \neq 0$ or $\alpha = -1, 0 < \beta \leq 1$) has a basis $\mathcal{B} = \left\lbrace e_1, e_2, e_3, e_4\right\rbrace$ such that the nonzero Lie brackets are $$[e_1, e_4] = e_1, \qquad [e_2, e_4] = \beta e_2, \qquad [e_3, e_4] = \alpha e_3.$$
\begin{theorem}
A pseudo-Riemannian inner product on $\mathfrak{g}_{4.5}^{\alpha, \beta}$ is an algebraic Ricci soliton if its Ricci operator satisfies the following identities
\begin{multicols}{2}
\begin{itemize}
\item $R_{4i} = 0, \; i = 1, 2, 3$
\item $R_{21} = \beta R_{21}$
\item $R_{31} = \alpha R_{31}$
\item $R_{12} = \beta R_{12}$
\item $\alpha R_{32} = \beta R_{32}$
\item $R_{13} = \alpha R_{13}$
\item $\beta R_{23} = \alpha R_{23}$
\end{itemize}
\end{multicols}
Provided these identities are satisfied, the Ricci operator can take the form\\ $\operatorname{Ric} = \eta I_4 + D$ where $\eta = R_{44}$ and
$$D = \begin{bmatrix}
R_{11} - R_{44} & R_{12} & R_{13} & R_{14}\\
\\
R_{21} & R_{22} - R_{44} & R_{23} & R_{24}\\
\\
R_{31} & R_{32} & R_{33} - R_{44} & R_{34}\\
\\
0 & 0 & 0 & 0
\end{bmatrix}$$
is a derivation of $\mathfrak{g}_{4.5}^{\alpha, \beta}$ with respect to the basis $\mathcal{B}$. Note that if
$$R_{21} = R_{31} = R_{12} = R_{13} = R_{23} = R_{32} = 0,$$ then there are no additional conditions over $\alpha$ and $\beta$. But we see that
$$\left\lbrace\begin{array}{lll}
R_{21} \neq 0 \Longrightarrow \beta = 1\\
R_{31} \neq 0 \Longrightarrow \alpha = 1\\
R_{12} \neq 0 \Longrightarrow \beta = 1\\
R_{32} \neq 0 \Longrightarrow \alpha =  \beta\\
R_{13} \neq 0 \Longrightarrow \alpha = 1\\
R_{23} \neq 0 \Longrightarrow \alpha = \beta
\end{array}\right.$$
\end{theorem}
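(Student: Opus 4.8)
The plan is to follow the strategy common to all the preceding cases: assuming that $\langle\cdot,\cdot\rangle$ is an algebraic Ricci soliton, the matrix $D=\operatorname{Ric}-\eta I_4$ from (\ref{derivation}) is a derivation, and I would impose the Leibniz rule $D[e_i,e_j]=[D(e_i),e_j]+[e_i,D(e_j)]$ on each of the three nonzero brackets $[e_1,e_4]=e_1$, $[e_2,e_4]=\beta e_2$, $[e_3,e_4]=\alpha e_3$. Because every nonzero bracket has the diagonal form $[e_k,e_4]=\lambda_k e_k$ with $\lambda_1=1,\ \lambda_2=\beta,\ \lambda_3=\alpha$, the three computations are structurally identical and the resulting scalar conditions on the entries $R_{ij}$ decouple cleanly.

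First I would handle $[e_1,e_4]=e_1$. Expanding $[D(e_1),e_4]+[e_1,D(e_4)]$ and matching it with $D(e_1)$ coefficient by coefficient, the $e_1$-component gives $R_{44}-\eta=0$, hence $\eta=R_{44}$, while the $e_2$-, $e_3$-, and $e_4$-components give $R_{21}=\beta R_{21}$, $R_{31}=\alpha R_{31}$, and $R_{41}=0$. Repeating the computation for $[e_2,e_4]=\beta e_2$ produces $R_{12}=\beta R_{12}$, $\alpha R_{32}=\beta R_{32}$, and $\beta R_{42}=0$; and for $[e_3,e_4]=\alpha e_3$ it produces $R_{13}=\alpha R_{13}$, $\beta R_{23}=\alpha R_{23}$, and $\alpha R_{43}=0$.

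Here the parameter restrictions become essential. For $\mathfrak{g}_{4.5}^{\alpha,\beta}$ one always has $\alpha\neq0$ and $\beta\neq0$ (in both admissible ranges $-1<\alpha\le\beta\le1,\ \alpha\beta\neq0$ and $\alpha=-1,\ 0<\beta\le1$), so $\beta R_{42}=0$ and $\alpha R_{43}=0$ force $R_{42}=R_{43}=0$; together with $R_{41}=0$ this yields $R_{4i}=0$ for $i=1,2,3$. Consequently $D(e_1),D(e_2),D(e_3)$ all lie in $\operatorname{span}\{e_1,e_2,e_3\}$, a subspace on which the bracket vanishes identically, so the three remaining conditions coming from $[e_1,e_2]=[e_1,e_3]=[e_2,e_3]=0$ are automatically satisfied and need no further work.

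I do not anticipate a genuine obstacle: the diagonal action of $e_4$ makes every derivation condition split into independent scalar equations. The only point demanding care is the final disjunctive interpretation — rewriting the constraints as $(1-\beta)R_{21}=0$, $(1-\alpha)R_{31}=0$, $(1-\beta)R_{12}=0$, $(\alpha-\beta)R_{32}=0$, $(1-\alpha)R_{13}=0$, $(\alpha-\beta)R_{23}=0$, so that a nonzero off-diagonal Ricci entry forces the corresponding parameter relation (for example $R_{21}\neq0\Rightarrow\beta=1$ and $R_{32}\neq0\Rightarrow\alpha=\beta$). Substituting $\eta=R_{44}$ into (\ref{derivation}) and recording all the vanishing entries then reproduces the displayed matrix for $D$, completing the argument.
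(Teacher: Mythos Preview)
Your proposal is correct and follows essentially the same approach as the paper: you impose the Leibniz rule on each of the three nonzero brackets $[e_k,e_4]=\lambda_k e_k$, read off $\eta=R_{44}$, $R_{4i}=0$, and the six scalar constraints on the off-diagonal entries, then observe that the remaining zero brackets impose nothing new. Your justification that $\alpha,\beta\neq0$ in both parameter ranges (needed to conclude $R_{42}=R_{43}=0$) and your remark that $D(e_1),D(e_2),D(e_3)$ land in the abelian subspace $\operatorname{span}\{e_1,e_2,e_3\}$ are slightly more explicit than the paper's treatment, but the argument is the same.
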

\begin{proof}
Being a derivation of $\mathfrak{g}_{4.5}^{\alpha, \beta}$, $D$ in (\ref{derivation}) satisfies the following rule
\begin{eqnarray*}
& [D(e_1), e_4] + [e_1, D(e_4)] = D[e_1, e_4] \hspace{2.8cm}\\ \Leftrightarrow & \left[ \left( R_{11} - \eta\right)e_1 + R_{21}e_2 + R_{31}e_3 + R_{41}e_4, e_4\right] + \hspace{1.1cm}\\ & \left[ e_1, R_{14}e_1 + R_{24}e_2 + R_{34}e_3 + \left(R_{44} - \eta\right)e_4\right] = D(e_1)\\
\Leftrightarrow& \left( R_{11} - \eta\right)e_1 + \beta R_{21}e_2 + \alpha R_{31}e_3 + \left( R_{44} - \eta\right)e_1\hspace{0.7cm}\\
& = \left( R_{11} - \eta\right)e_1 + R_{21}e_2 + R_{31}e_3 + R_{41}e_4\hspace{1.9cm}
\end{eqnarray*}
By identifying the components of $e_i$, we obtain that
$$\left\lbrace\begin{array}{lll}
R_{11} - \eta + R_{44} - \eta = R_{11} - \eta \Longrightarrow \eta = R_{44} \\
\\
\beta R_{21} = R_{21}, \quad \alpha R_{31} = R_{31}, \quad R_{41} = 0
\end{array}\right.$$
In the same way, $D$ satisfies the following rule
\begin{eqnarray*}
& [D(e_2), e_4] + [e_2, D(e_4)] = D[e_2, e_4]\hspace{3.08cm}\\ \Leftrightarrow & \left[ R_{12}e_1 + \left( R_{22} - \eta\right)e_2 + R_{32}e_3 + R_{42}e_4, e_4\right] +\hspace{1.38cm}\\
& \left[ e_2, R_{14}e_1 + R_{24}e_2 + R_{34}e_3 + \left( R_{44} - \eta\right)e_4\right] = \beta D(e_2)\\
\Leftrightarrow& R_{12}e_1 + \beta\left( R_{22} - \eta\right)e_2 + \alpha R_{32}e_3 + \beta\left( R_{44} - \eta\right)e_2\hspace{0.5cm}\\
& = \beta R_{12}e_1 + \beta\left(R_{22} - \eta \right)e_2 + \beta R_{32}e_3 + \beta R_{42}e_4\hspace{1cm}
\end{eqnarray*}
By examining the components of $e_i$, we find that
$$\left\lbrace\begin{array}{lll}
R_{12} = \beta R_{12}, \quad \alpha R_{32} = \beta R_{32}, \quad \beta R_{42} = 0 \overset{\beta \neq 0}{\Longrightarrow} R_{42} = 0\\
\\
R_{22} - \eta + R_{44} - \eta = R_{22} - \eta \Longrightarrow \eta = R_{44} 
\end{array}\right.$$
As well, the rule below holds
\begin{eqnarray*}
& [D(e_3), e_4] + [e_3, D(e_4)] = D[e_3, e_4]\hspace{3.08cm}\\ \Leftrightarrow & \left[ R_{13}e_1 + R_{23}e_2 + \left( R_{33} - \eta\right)e_3 + R_{43}e_4, e_4\right] +\hspace{1.35cm}\\
& \left[ e_3, R_{14}e_1 + R_{24}e_2 + R_{34}e_3 + \left( R_{44} - \eta\right)e_4\right] = \alpha D(e_3)\\
\Leftrightarrow& R_{13}e_1 + \beta R_{23}e_2 + \alpha\left( R_{33} - \eta\right)e_3 + \alpha\left( R_{44} - \eta\right)e_3\hspace{0.5cm}\\
& = \alpha R_{13}e_1 + \alpha R_{23}e_2 + \alpha\left(R_{33} - \eta\right)e_3 + \alpha R_{43}e_4\hspace{1cm}
\end{eqnarray*}
Identifying the components of $e_i$ yields
$$\left\lbrace\begin{array}{lll}
R_{13} = \alpha R_{13}, \quad \beta R_{23} = \alpha R_{23}, \quad R_{43} = 0\\
\\
R_{33} - \eta  + R_{44} - \eta = R_{33} - \eta \Longrightarrow \eta = R_{44}
\end{array}\right.$$
Therefore, the derivation $D$ takes the form
$$D = \begin{bmatrix}
R_{11} - R_{44} & R_{12} & R_{13} & R_{14}\\
\\
R_{21} & R_{22} - R_{44} & R_{23} & R_{24}\\
\\
R_{31} & R_{32} & R_{33} - R_{44} & R_{34}\\
\\
0 & 0 & 0 & 0
\end{bmatrix}.$$
Now, all the remaining vanishing brackets are trivially satisfied by $D$. We can see that, if
$$R_{21} = R_{31} = R_{12} = R_{13} = R_{23} = R_{32} = 0,$$ then there are no additional conditions over $\alpha$ and $\beta$. But
$$\left\lbrace\begin{array}{lll}
R_{21} \neq 0 \Longrightarrow \beta = 1\\
R_{31} \neq 0 \Longrightarrow \alpha = 1\\
R_{12} \neq 0 \Longrightarrow \beta = 1\\
R_{32} \neq 0 \Longrightarrow \alpha =  \beta\\
R_{13} \neq 0 \Longrightarrow \alpha = 1\\
R_{23} \neq 0 \Longrightarrow \alpha = \beta
\end{array}\right.$$
\end{proof}
\subsection{The Lie algebra $\mathfrak{g}_{4.6}^{\alpha, \beta}$}
The Lie algebra $\mathfrak{g}_{4.6}^{\alpha, \beta}$ ($\alpha > 0$ and $\beta \in \mathbb{R}$) has a basis $\mathcal{B} = \left\lbrace e_1, e_2, e_3, e_4\right\rbrace$ such that the nonzero Lie brackets are $$[e_1, e_4] = \alpha e_1, \qquad [e_2, e_4] = \beta e_2 - e_3, \qquad [e_3, e_4] = e_2 + \beta e_3.$$
\begin{theorem}
A pseudo-Riemannian inner product on $\mathfrak{g}_{4.6}^{\alpha, \beta}$ is an algebraic Ricci soliton if its Ricci operator fulfills the following relations
\begin{multicols}{2}
\begin{itemize}
\item $R_{4i} = 0, \; i = 1, 2, 3$
\item $R_{33} = R_{22}$
\item $R_{32} = -R_{23}$
\item $\beta R_{21} + R_{31} = \alpha R_{21}$
\item $-R_{21} + \beta R_{31} = \alpha R_{31}$
\item $\alpha R_{12} = \beta R_{12} - R_{13}$
\item $\alpha R_{13} = R_{12} + \beta R_{13}$
\end{itemize}
\end{multicols}
In this case, the Ricci operator can be stated as $\operatorname{Ric} = \eta I_4 + D$ where $\eta = R_{44}$ and 
$$D = \begin{bmatrix}
R_{11} - R_{44} & R_{12} & R_{13} & R_{14}\\
\\
R_{21} & R_{22} - R_{44} & R_{23} & R_{24}\\
\\
R_{31} & -R_{23} & R_{22} - R_{44} & R_{34}\\
\\
0 & 0 & 0 & 0
\end{bmatrix}$$
is a derivation of $\mathfrak{g}_{4.6}^{\alpha, \beta}$ with respect to the basis $\mathcal{B}$. In particular, if $\beta = 0$, then we see that $\alpha R_{12} = \alpha^2R_{13} = -R_{13}$. This implies that $R_{13} = R_{12} = 0$. Similarly, we remark that $\alpha R_{31} = \alpha^2R_{21} = -R_{21}$, hence $R_{21} = R_{31} = 0$. Therefore, the derivation $D$ becomes
$$D = \begin{bmatrix}
R_{11} - R_{44} & 0 & 0 & R_{14}\\
\\
0 & R_{22} - R_{44} & R_{23} & R_{24}\\
\\
0 & -R_{23} & R_{22} - R_{44} & R_{34}\\
\\
0 & 0 & 0 & 0
\end{bmatrix}.$$
\end{theorem}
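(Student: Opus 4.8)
The plan is to follow the strategy used throughout this section. Set $D = \operatorname{Ric} - \eta I_4$ as in (\ref{derivation}) and impose the Leibniz rule $D[e_i,e_j] = [D(e_i),e_j] + [e_i,D(e_j)]$ on each of the three nonzero brackets $[e_1,e_4] = \alpha e_1$, $[e_2,e_4] = \beta e_2 - e_3$ and $[e_3,e_4] = e_2 + \beta e_3$. Expanding each column $D(e_k)$ in the basis $\mathcal{B}$ and using bilinearity together with the structure constants turns every derivation identity into an equality of two vectors; matching the coefficients of $e_1, e_2, e_3, e_4$ then yields the scalar relations among the $R_{ij}$.

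First I would process $[e_1,e_4] = \alpha e_1$, whose right-hand side is $\alpha D(e_1)$. The $e_1$-coefficient forces $\alpha(R_{44}-\eta) = 0$, and since $\alpha > 0$ this pins down $\eta = R_{44}$; the $e_4$-coefficient gives $R_{41} = 0$, while the $e_2$- and $e_3$-coefficients give $\beta R_{21} + R_{31} = \alpha R_{21}$ and $-R_{21} + \beta R_{31} = \alpha R_{31}$. Next I would treat $[e_2,e_4] = \beta e_2 - e_3$ against $\beta D(e_2) - D(e_3)$: the $e_1$-coefficient gives $\alpha R_{12} = \beta R_{12} - R_{13}$, while the $e_2$- and $e_3$-coefficients, after substituting $\eta = R_{44}$, give $R_{32} = -R_{23}$ and $R_{22} = R_{33}$. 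The third bracket $[e_3,e_4] = e_2 + \beta e_3$ against $D(e_2) + \beta D(e_3)$ contributes the remaining relation $\alpha R_{13} = R_{12} + \beta R_{13}$ and re-confirms $R_{22} = R_{33}$ and $R_{32} = -R_{23}$.

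The one step that genuinely couples two brackets is the vanishing of $R_{42}$ and $R_{43}$, which is the main (though modest) obstacle. The $e_4$-coefficients of the second and third brackets are not separately conclusive; they read $\beta R_{42} = R_{43}$ and $R_{42} + \beta R_{43} = 0$. Eliminating $R_{43}$ gives $(1+\beta^2)R_{42} = 0$, and since $1+\beta^2 > 0$ for every real $\beta$ I conclude $R_{42} = 0$ and hence $R_{43} = 0$, completing $R_{4i} = 0$. With the whole fourth row of $D$ now zero, the vectors $D(e_1), D(e_2), D(e_3)$ lie in $\operatorname{span}\{e_1,e_2,e_3\}$, which is an abelian subalgebra; therefore the Leibniz identities for the three remaining (vanishing) brackets $[e_1,e_2]$, $[e_1,e_3]$, $[e_2,e_3]$ hold automatically, establishing that $D$ is a derivation.

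To close, I would read off the displayed form of $D$ and handle the case $\beta = 0$. There the relations decouple into $\alpha R_{12} = -R_{13}$, $\alpha R_{13} = R_{12}$ and $R_{31} = \alpha R_{21}$, $-R_{21} = \alpha R_{31}$; substituting within each pair gives $(\alpha^2+1)R_{13} = 0$ and $(\alpha^2+1)R_{21} = 0$, so $R_{12} = R_{13} = R_{21} = R_{31} = 0$, which produces the stated reduced matrix. The whole argument is routine bookkeeping; the only point needing real care is the simultaneous use of the two $e_4$-relations noted above.
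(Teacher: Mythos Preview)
Your proof is correct and follows essentially the same route as the paper: expand the Leibniz identity on each bracket, match coefficients, and read off the relations. The one substantive difference concerns the vanishing of $R_{42}$ and $R_{43}$. The paper does \emph{not} combine the $e_4$-components $\beta R_{42}=R_{43}$ and $R_{42}+\beta R_{43}=0$; instead it checks the Leibniz identity on the vanishing brackets $[e_1,e_2]=0$ and $[e_1,e_3]=0$, which (using $R_{41}=0$ and $[e_1,e_4]=\alpha e_1$ with $\alpha>0$) give $\alpha R_{42}e_1=0$ and $\alpha R_{43}e_1=0$ directly. Your elimination $(1+\beta^2)R_{42}=0$ is a perfectly valid alternative; it has the mild advantage that, once the fourth row of $D$ is zero, the three remaining Leibniz checks on the abelian span $\{e_1,e_2,e_3\}$ become automatic, whereas the paper still has to verify them one by one.
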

\begin{proof}
Given that $D$ in (\ref{derivation}) is a derivation of $\mathfrak{g}_{4.6}^{\alpha, \beta}$, it satisfies the following condition
\begin{eqnarray*}
& [D(e_1), e_4] + [e_1, D(e_4)] = D[e_1, e_4] \hspace{5.1cm}\\ \Leftrightarrow & \left[ \left( R_{11} - \eta\right)e_1 + R_{21}e_2 + R_{31}e_3 + R_{41}e_4, e_4\right] + \hspace{3.35cm}\\ & \left[ e_1, R_{14}e_1 + R_{24}e_2 + R_{34}e_3 + \left(R_{44} - \eta\right)e_4\right] = \alpha D(e_1)\hspace{2cm}\\
\Leftrightarrow& \alpha\left( R_{11} - \eta\right)e_1 + R_{21}(\beta e_2 - e_3) + R_{31}(e_2 + \beta e_3) + \alpha\left( R_{44} - \eta\right)e_1\\
& = \alpha\left( R_{11} - \eta\right)e_1 + \alpha R_{21}e_2 + \alpha R_{31}e_3 + \alpha R_{41}e_4\hspace{3.2cm}
\end{eqnarray*}
From the components of $e_i$, it follows that
$$\left\lbrace\begin{array}{lll}
R_{11} - \eta + R_{44} - \eta = R_{11} - \eta \Longrightarrow \eta = R_{44} \\
\\
\beta R_{21} + R_{31} = \alpha R_{21}\\
\\
-R_{21} + \beta R_{31} = \alpha R_{31}, \qquad R_{41} = 0
\end{array}\right.$$
Similarly, $D$ satisfies the following condition
\begin{eqnarray*}
& [D(e_2), e_4] + [e_2, D(e_4)] = D[e_2, e_4]\hspace{8.88cm}\\ \Leftrightarrow & \left[ R_{12}e_1 + \left( R_{22} - \eta\right)e_2 + R_{32}e_3 + R_{42}e_4, e_4\right] +\hspace{7.15cm}\\
& \left[ e_2, R_{14}e_1 + R_{24}e_2 + R_{34}e_3 + \left( R_{44} - \eta\right)e_4\right] = \beta D(e_2) - D(e_3)\hspace{4.3cm}\\
\Leftrightarrow& \alpha R_{12}e_1 + \left( R_{22} - \eta\right)(\beta e_2 - e_3) + R_{32}(e_2 + \beta e_3) + \left( R_{44} - \eta\right)(\beta e_2 - e_3) =\hspace{2.2cm}\\
& \left(\beta R_{12} - R_{13}\right)e_1 + \left(\beta\left(R_{22} - \eta \right) - R_{23}\right)e_2 + \left(\beta R_{32} - \left(R_{33} - \eta \right)\right)e_3 + \left( \beta R_{42} - R_{43}\right)e_4
\end{eqnarray*}
An analysis of the components of $e_i$ shows that
$$\left\lbrace\begin{array}{lll}
\alpha R_{12} = \beta R_{12} - R_{13}\\
\\
\beta\left(R_{22} - \eta\right) + R_{32} + \beta\left(R_{44} - \eta\right) = \beta\left(R_{22} - \eta\right) - R_{23} \overset{\eta = R_{44}}{\Longrightarrow} R_{32} = -R_{23}\\
\\
\eta - R_{22} + \beta R_{32} + \eta - R_{44} = \beta R_{32} + \eta - R_{33} \overset{\eta = R_{44}}{\Longrightarrow} R_{33} = R_{22}\\
\\
\beta R_{42} = R_{43} 
\end{array}\right.$$
Likewise, the following condition is satisfied
\begin{eqnarray*}
& [D(e_3), e_4] + [e_3, D(e_4)] = D[e_3, e_4]\hspace{8.4cm}\\ \Leftrightarrow & \left[ R_{13}e_1 + R_{23}e_2 + \left( R_{33} - \eta\right)e_3 + R_{43}e_4, e_4\right] +\hspace{6.7cm}\\
& \left[ e_3, R_{14}e_1 + R_{24}e_2 + R_{34}e_3 + \left( R_{44} - \eta\right)e_4\right] = D(e_2) + \beta D(e_3)\hspace{3.8cm}\\
\Leftrightarrow& \alpha R_{13}e_1 + R_{23}(\beta e_2 - e_3) + \left( R_{33} - \eta\right)(e_2 + \beta e_3) + \left( R_{44} - \eta\right)(e_2 + \beta e_3) =\hspace{1.8cm}\\
& \left(R_{12} + \beta R_{13}\right)e_1 + \left(R_{22} - \eta + \beta R_{23}\right)e_2 + \left(R_{32} + \beta\left(R_{33} - \eta \right)\right)e_3 + \left(R_{42} + \beta R_{43}\right)e_4
\end{eqnarray*}
Upon identifying the components of $e_i$, we conclude that
$$\left\lbrace\begin{array}{lll}
\alpha R_{13} = R_{12} + \beta R_{13}\\
\\
\beta R_{23} + R_{33} - \eta + R_{44} - \eta = R_{22} - \eta + \beta R_{23} \overset{\eta = R_{44}}{\Longrightarrow} R_{33} = R_{22}\\
\\
-R_{23} + \beta\left(R_{33} - \eta\right) + \beta\left(R_{44} - \eta\right) = R_{32} + \beta\left(R_{33} - \eta\right) \overset{\eta = R_{44}}{\Longrightarrow} R_{32} = -R_{23}\\
\\
R_{42} + \beta R_{43} = 0
\end{array}\right.$$
Next, let us analyse the condition $[D(e_1), e_2] + [e_1, D(e_2)] = 0$. Since $R_{41} = 0$, then $[D(e_1), e_2]$ is zero. Hence 
$$[e_1, D(e_2)] = 0 \Leftrightarrow [e_1, R_{12}e_1 + (R_{22} - \eta)e_2 + R_{32}e_3 + R_{42}e_4] = 0 \Leftrightarrow \alpha R_{42}e_1 = 0 \Leftrightarrow R_{42} = 0.$$
In the same way, the property $[D(e_1), e_3] + [e_1, D(e_3)] = 0$ shows that $R_{43} = 0$. But the property $[D(e_2), e_3] + [e_2, D(e_3)] = 0$ is trivial.
We have therefore established the theorem's stated results.
\end{proof}
\subsection{The Lie algebra $\mathfrak{g}_{4.7}$}
The Lie algebra $\mathfrak{g}_{4.7}$ has a basis $\mathcal{B} = \left\lbrace e_1, e_2, e_3, e_4\right\rbrace$ such that the nonzero Lie brackets are
$$[e_1, e_4] = 2e_1, \qquad [e_2, e_4] = e_2, \qquad [e_3, e_4] = e_2 + e_3, \qquad [e_2, e_3] = e_1.$$
\begin{theorem}
A pseudo-Riemannian inner product on $\mathfrak{g}_{4.7}$ is an algebraic Ricci soliton, provided its Ricci operator satisfies the conditions below
\begin{multicols}{2}
\begin{itemize}
\item $R_{i1} = 0, \; i = 2, 3, 4$
\item $R_{32} = R_{42} = R_{43} = 0$
\item $R_{34} = -R_{12}$
\item $R_{24} = R_{13} - R_{12}$
\item $R_{33} = R_{22}$
\item $R_{44} = R_{22} + R_{33} - R_{11}$
\end{itemize}
\end{multicols}
In this situation, the Ricci operator can be written as $\operatorname{Ric} = \eta I_4 + D$ where\\ $\eta = R_{44} = R_{22} + R_{33} - R_{11}$ and
$$D = \begin{bmatrix}
R_{11} - R_{44} & R_{12} & R_{13} & R_{14}\\
\\
0 & R_{22} - R_{44} & R_{23} & R_{13} - R_{12}\\
\\
0 & 0 & R_{22} - R_{44} & -R_{12}\\
\\
0 & 0 & 0 & 0
\end{bmatrix}$$
is a derivation of $\mathfrak{g}_{4.7}$ with respect to the basis $\mathcal{B}$.
\end{theorem}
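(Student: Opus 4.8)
The plan is to follow the same strategy used throughout this section: the matrix $D$ displayed in (\ref{derivation}) is required to be a derivation of $\mathfrak{g}_{4.7}$, so it must satisfy the Leibniz rule $D[e_i,e_j]=[D(e_i),e_j]+[e_i,D(e_j)]$ for every pair of basis vectors. I would impose this condition successively on the four nonzero brackets $[e_1,e_4]=2e_1$, $[e_2,e_4]=e_2$, $[e_3,e_4]=e_2+e_3$, and $[e_2,e_3]=e_1$, reading each $D(e_j)$ off as the $j$-th column of (\ref{derivation}), and then verify that the two remaining vanishing brackets $[e_1,e_2]=0$ and $[e_1,e_3]=0$ are automatically satisfied.

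First I would process $[e_1,e_4]=2e_1$. Since $[D(e_1),e_4]$ distributes $(R_{11}-\eta)e_1+R_{21}e_2+R_{31}e_3+R_{41}e_4$ across the brackets with $e_4$, while $[e_1,D(e_4)]$ contributes only through the $e_4$-component of $D(e_4)$, comparing the coefficients of $e_1,e_2,e_3,e_4$ against $2D(e_1)$ yields $\eta=R_{44}$ together with $R_{21}=R_{31}=R_{41}=0$. With these in hand I would turn to $[e_2,e_4]=e_2$; here the term $[e_2,D(e_4)]$ feeds in $R_{34}[e_2,e_3]=R_{34}e_1$, and matching coefficients produces $R_{34}=-R_{12}$, $R_{32}=0$, and $R_{42}=0$. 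The bracket $[e_3,e_4]=e_2+e_3$ is handled identically, with $[e_3,D(e_4)]$ contributing $R_{24}[e_3,e_2]=-R_{24}e_1$; the comparison gives $R_{24}=R_{13}-R_{12}$, $R_{33}=R_{22}$, and $R_{43}=0$.

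The decisive step is the nilpotent bracket $[e_2,e_3]=e_1$, which couples the constraints coming from $\operatorname{ad}_{e_4}$ to the structure of $D(e_1)$. Using the already-established vanishings $R_{32}=R_{42}=R_{43}=0$ and $R_{21}=R_{31}=R_{41}=0$, both $[D(e_2),e_3]$ and $[e_2,D(e_3)]$ collapse to multiples of $e_1$, and equating their sum $(R_{22}-\eta)e_1+(R_{33}-\eta)e_1$ with $D(e_1)=(R_{11}-\eta)e_1$ forces $\eta=R_{22}+R_{33}-R_{11}$; combined with $\eta=R_{44}$ this delivers $R_{44}=R_{22}+R_{33}-R_{11}$. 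I expect the interplay between the $e_4$-brackets and this single Heisenberg-type bracket to be the main bookkeeping obstacle: the cross-terms $R_{34}$ and $R_{24}$ appear precisely because $\operatorname{ad}_{e_4}$ does not commute with $\operatorname{ad}_{e_2}$ and $\operatorname{ad}_{e_3}$, so one must track carefully which off-diagonal entries are already fixed before the final bracket is imposed. Finally, after substituting all derived conditions, each of $D(e_1),D(e_2),D(e_3)$ reduces to a vector in $\operatorname{span}(e_1,e_2,e_3)$, on which $\operatorname{ad}_{e_1}$ acts trivially because $[e_1,e_2]=[e_1,e_3]=0$, so the two remaining brackets hold automatically, yielding the stated conditions and the displayed form of $D$.
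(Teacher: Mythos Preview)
Your proposal is correct and follows essentially the same route as the paper: you impose the Leibniz rule on the four nonzero brackets in the order $[e_1,e_4]$, $[e_2,e_4]$, $[e_3,e_4]$, $[e_2,e_3]$, extracting at each step exactly the same constraints ($\eta=R_{44}$ and $R_{21}=R_{31}=R_{41}=0$; then $R_{34}=-R_{12}$, $R_{32}=R_{42}=0$; then $R_{24}=R_{13}-R_{12}$, $R_{33}=R_{22}$, $R_{43}=0$; finally $R_{44}=R_{22}+R_{33}-R_{11}$), and your closing argument that the remaining brackets $[e_1,e_2]=[e_1,e_3]=0$ are automatic agrees with the paper's verification.
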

\begin{proof}
Since $D$ in (\ref{derivation}) is a derivation of $\mathfrak{g}_{4.7}$, the condition below holds
\begin{eqnarray*}
& [D(e_1), e_4] + [e_1, D(e_4)] = D[e_1, e_4] \hspace{3.3cm}\\ \Leftrightarrow & \left[ \left( R_{11} - \eta\right)e_1 + R_{21}e_2 + R_{31}e_3 + R_{41}e_4, e_4\right] + \hspace{1.6cm}\\ & \left[ e_1, R_{14}e_1 + R_{24}e_2 + R_{34}e_3 + \left(R_{44} - \eta\right)e_4\right] = 2D(e_1)\hspace{0.3cm}\\
\Leftrightarrow& 2\left( R_{11} - \eta\right)e_1 + R_{21}e_2 + R_{31}(e_2 + e_3) + 2\left( R_{44} - \eta\right)e_1\\
& = 2\left( R_{11} - \eta\right)e_1 + 2R_{21}e_2 + 2R_{31}e_3 + 2R_{41}e_4\hspace{1.5cm}
\end{eqnarray*}
From the components of $e_i$, we obtain
$$\left\lbrace\begin{array}{lll}
R_{11} - \eta + R_{44} - \eta = R_{11} - \eta \Longrightarrow \eta = R_{44} \\
\\
R_{21} + R_{31} = 2R_{21} \Longrightarrow R_{31} = R_{21}\\
\\
R_{31} = 2R_{31} \Longrightarrow R_{31} = 0, \qquad R_{41} = 0
\end{array}\right.$$
Likewise, the condition below is satisfied
\begin{eqnarray*}
& [D(e_2), e_4] + [e_2, D(e_4)] = D[e_2, e_4] \hspace{4.55cm}\\ \Leftrightarrow & \left[ R_{12}e_1 + \left( R_{22} - \eta\right)e_2 + R_{32}e_3 + R_{42}e_4, e_4\right] + \hspace{2.8cm}\\ & \left[ e_2, R_{14}e_1 + R_{24}e_2 + R_{34}e_3 + \left(R_{44} - \eta\right)e_4\right] = D(e_2)\hspace{1.7cm}\\
\Leftrightarrow& 2R_{12}e_1 + \left( R_{22} - \eta\right)e_2 + R_{32}(e_2 + e_3) + R_{34}e_1 + \left( R_{44} - \eta\right)e_2\\
& = R_{12}e_1 + \left( R_{22} - \eta\right)e_2 + R_{32}e_3 + R_{42}e_4\hspace{3.5cm}
\end{eqnarray*}
By identifying each component of $e_i$, we deduce that
$$\left\lbrace\begin{array}{lll}
2R_{12} + R_{34} = R_{12} \Longrightarrow R_{34} = -R_{12}\\
\\
R_{22} - \eta + R_{32} + R_{44} - \eta = R_{22} - \eta \overset{\eta = R_{44}}{\Longrightarrow} R_{32} = 0 \\
\\
R_{42} = 0
\end{array}\right.$$
Then the derivation $D$ becomes 
$$D = \begin{bmatrix}
R_{11} - R_{44} & R_{12} & R_{13} & R_{14}\\
\\
0 & R_{22} - R_{44} & R_{23} & R_{24}\\
\\
0 & 0 & R_{33} - R_{44} & -R_{12}\\
\\
0 & 0 & R_{43} & 0
\end{bmatrix}.$$ 
As well, we have
\begin{eqnarray*}
& [D(e_3), e_4] + [e_3, D(e_4)] = D[e_3, e_4] \hspace{5.45cm}\\ \Leftrightarrow & \left[ R_{13}e_1 + R_{23}e_2 + \left( R_{33} - R_{44}\right)e_3 + R_{43}e_4, e_4\right] + \hspace{3.3cm}\\ & \left[ e_3, R_{14}e_1 + R_{24}e_2 - R_{12}e_3 \right] = D(e_2) + D(e_3)\hspace{3.7cm}\\
\Leftrightarrow& 2R_{13}e_1 + R_{23}e_2 + \left( R_{33} - R_{44}\right)(e_2 + e_3) - R_{24}e_1\hspace{3.2cm}\\
& = \left( R_{12} + R_{13}\right)e_1 + \left( R_{22} - R_{44} + R_{23}\right)e_2 + \left( R_{33} - R_{44}\right)e_3 + R_{43}e_4
\end{eqnarray*}
By identifying the components of $e_i$, we obtain that
$$\left\lbrace\begin{array}{lll}
2R_{13} - R_{24} = R_{12} + R_{13} \Longrightarrow R_{24} = R_{13} - R_{12}\\
\\
R_{23} + R_{33} - R_{44} = R_{22} - R_{44} + R_{23} \Longrightarrow R_{33} = R_{22} \\
\\
R_{43} = 0
\end{array}\right.$$
Once again, the following condition is satisfied by $D$:
\begin{eqnarray*}
& [D(e_2), e_3] + [e_2, D(e_3)] = D[e_2, e_3] \hspace{7cm}\\ \Leftrightarrow & \left[ R_{12}e_1 + \left( R_{22} - R_{44}\right)e_2, e_3\right] + \left[ e_2, R_{13}e_1 + R_{23}e_2 + \left( R_{33} - R_{44}\right)e_3 \right] = D(e_1)\\
\Leftrightarrow& \left( R_{22} - R_{44}\right)e_1 + \left( R_{33} - R_{44}\right)e_1 = \left( R_{11} - R_{44}\right)e_1\hspace{4.5cm}
\end{eqnarray*}
Hence $R_{22} - R_{44} + R_{33} - R_{44} = R_{11} - R_{44} \Longrightarrow R_{44} = R_{22} + R_{33} - R_{11}$. Now, it is straightforward to see that the following remaining two conditions are trivial: 
$$[D(e_1), e_2] + [e_1, D(e_2)] = 0, \qquad [D(e_1), e_3] + [e_1, D(e_3)] = 0.$$
It follows that the consequences of the theorem have been obtained.
\end{proof}
\subsection{The Lie algebra $\mathfrak{g}_{4.8}^{-1}$}
The Lie algebra $\mathfrak{g}_{4.8}^{-1}$ has a basis $\mathcal{B} = \left\lbrace e_1, e_2, e_3, e_4\right\rbrace$ such that the nonzero Lie brackets are 
$$[e_2, e_3] = e_1, \qquad [e_2, e_4] = e_2, \qquad [e_3, e_4] = -e_3.$$
\begin{theorem}
A pseudo-Riemannian inner product on $\mathfrak{g}_{4.8}^{-1}$ is an algebraic Ricci soliton precisely when the associated Ricci operator satisfies
\begin{multicols}{2}
\begin{itemize}
\item $R_{i1} = 0, \; i = 2, 3, 4$
\item $R_{32} = R_{23} = R_{42} = R_{43} = 0$
\item $R_{44} = R_{22} + R_{33} - R_{11}$
\end{itemize}
\end{multicols}
With these conditions in place, the Ricci operator may be written as $\operatorname{Ric} = \eta I_4 + D$ where $\eta = R_{44} = R_{22} + R_{33} - R_{11}$ and
$$D = \begin{bmatrix}
R_{11} - R_{44} & R_{12} & R_{13} & R_{14}\\
\\
0 & R_{22} - R_{44} & 0 & R_{13}\\
\\
0 & 0 & R_{33} - R_{44} & R_{12}\\
\\
0 & 0 & 0 & 0
\end{bmatrix}$$
is a derivation of $\mathfrak{g}_{4.8}^{-1}$ with respect to the basis $\mathcal{B}$.
\end{theorem}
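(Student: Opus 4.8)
The plan is to follow precisely the strategy used throughout the previous theorems: begin from the generic derivation $D$ displayed in (\ref{derivation}), whose $j$-th column records the image $D(e_j)=\sum_i R_{ij}e_i$ with the diagonal shifted by $-\eta$, and then impose the Leibniz rule on each of the three nonzero brackets of $\mathfrak{g}_{4.8}^{-1}$, namely $[e_2,e_3]=e_1$, $[e_2,e_4]=e_2$, and $[e_3,e_4]=-e_3$. For each such bracket I would expand $[D(e_i),e_j]+[e_i,D(e_j)]$ using the structure constants (in particular $[e_4,e_3]=e_3$ and $[e_3,e_2]=-e_1$), collect the coefficients of $e_1,e_2,e_3,e_4$, and equate them with the corresponding image $D[e_i,e_j]$.

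First I would treat $[e_2,e_3]=e_1$, writing $[D(e_2),e_3]+[e_2,D(e_3)]=D(e_1)$. Matching components, the $e_1$-coefficient yields $\eta=R_{22}+R_{33}-R_{11}$, the $e_4$-coefficient gives $R_{41}=0$, and the $e_2$- and $e_3$-coefficients produce the linking relations $R_{43}=R_{21}$ and $R_{42}=R_{31}$. Next I would impose $[e_2,e_4]=e_2$ via $[D(e_2),e_4]+[e_2,D(e_4)]=D(e_2)$; here the $e_2$-coefficient forces $\eta=R_{44}$, the $e_3$-coefficient forces $R_{32}=0$, the $e_4$-coefficient forces $R_{42}=0$ (whence $R_{31}=0$ by the earlier link), and the $e_1$-coefficient gives $R_{34}=R_{12}$. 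Finally, imposing $[e_3,e_4]=-e_3$ through $[D(e_3),e_4]+[e_3,D(e_4)]=-D(e_3)$, the $e_2$-coefficient forces $R_{23}=0$, the $e_4$-coefficient forces $R_{43}=0$ (whence $R_{21}=0$), the $e_1$-coefficient gives $R_{24}=R_{13}$, and the $e_3$-coefficient reconfirms $\eta=R_{44}$.

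Assembling these outputs reproduces exactly the asserted relations $R_{21}=R_{31}=R_{41}=0$ and $R_{32}=R_{23}=R_{42}=R_{43}=0$, together with the scalar compatibility $\eta=R_{44}=R_{22}+R_{33}-R_{11}$ and the identifications $R_{24}=R_{13}$, $R_{34}=R_{12}$ that pin down the stated shape of $D$. It then remains to check that the derivation condition on the three vanishing brackets $[e_1,e_2]=[e_1,e_3]=[e_1,e_4]=0$ imposes nothing further. Since $e_1$ is central and the first column of $D$ now reduces to $(R_{11}-R_{44})e_1$, each expression $[D(e_1),e_j]+[e_1,D(e_j)]$ collapses to zero automatically, so these brackets are trivially satisfied.

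I do not anticipate a genuine obstacle; the only delicate point is the bookkeeping across brackets, where the relations $R_{42}=R_{31}$ and $R_{43}=R_{21}$ extracted from the first bracket must be combined with $R_{42}=0$ and $R_{43}=0$ from the later two brackets to force $R_{31}=R_{21}=0$. One must also confirm that the two independent determinations $\eta=R_{22}+R_{33}-R_{11}$ and $\eta=R_{44}$ agree, which is precisely the scalar identity $R_{44}=R_{22}+R_{33}-R_{11}$ recorded in the statement.
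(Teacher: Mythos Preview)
Your proposal is correct and follows essentially the same approach as the paper: you impose the Leibniz rule on the three nonzero brackets $[e_2,e_3]=e_1$, $[e_2,e_4]=e_2$, $[e_3,e_4]=-e_3$ in the same order, extract the same component-wise conditions (including the linking relations $R_{43}=R_{21}$, $R_{42}=R_{31}$ from the first bracket that are later combined with $R_{42}=R_{43}=0$), and then observe that the remaining brackets involving $e_1$ are trivially satisfied because $e_1$ is central and $D(e_1)$ lies in $\mathbb{R}e_1$. The reasoning and the resulting constraints match the paper exactly.
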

\begin{proof}
Given that $D$ in (\ref{derivation}) is a derivation of $\mathfrak{g}_{4.8}^{-1}$, the following requirement is met
\begin{eqnarray*}
& [D(e_2), e_3] + [e_2, D(e_3)] = D[e_2, e_3] \hspace{8.7cm}\\ \Leftrightarrow & \left[ R_{12}e_1 + \left( R_{22} - \eta\right)e_2 + R_{32}e_3 + R_{42}e_4, e_3\right] + \hspace{6.95cm}\\ & \left[ e_2, R_{13}e_1 + R_{23}e_2 + \left(R_{33} - \eta\right)e_3 + R_{43}e_4\right] = D(e_1)\hspace{5.85cm}\\
\Leftrightarrow& \left( R_{22} - \eta\right)e_1 + R_{42}e_3 + \left( R_{33} - \eta\right)e_1 + R_{43}e_2 = \left( R_{11} - \eta\right)e_1 + R_{21}e_2 + R_{31}e_3 + R_{41}e_4
\end{eqnarray*}
By examining the components of $e_i$, we find that
$$\left\lbrace\begin{array}{lll}
R_{22} - \eta + R_{33} - \eta = R_{11} - \eta \Longrightarrow \eta = R_{22} + R_{33} - R_{11} \\
\\
R_{43} = R_{21}, \qquad R_{42} = R_{31}, \qquad R_{41} = 0
\end{array}\right.$$
In a similar manner, the following requirement holds
\begin{eqnarray*}
& [D(e_2), e_4] + [e_2, D(e_4)] = D[e_2, e_4] \hspace{9.7cm}\\ \Leftrightarrow & \left[ R_{12}e_1 + \left( R_{22} - \eta\right)e_2 + R_{32}e_3 + R_{42}e_4, e_4\right] + \hspace{7.98cm}\\ & \left[ e_2, R_{14}e_1 + R_{24}e_2 + R_{34}e_3 + \left(R_{44} - \eta\right)e_4\right] = D(e_2)\hspace{6.88cm}\\
\Leftrightarrow& \left( R_{22} - \eta\right)e_2 - R_{32}e_3 + R_{34}e_1 + \left( R_{44} - \eta\right)e_2 = R_{12}e_1 + \left( R_{22} - \eta\right)e_2 + R_{32}e_3 + R_{42}e_4\hspace{1cm}
\end{eqnarray*}
Identifying the components of $e_i$ yields
$$\left\lbrace\begin{array}{lll}
R_{34} = R_{12}\\
\\
R_{22} - \eta + R_{44} - \eta = R_{22} - \eta \Longrightarrow \eta = R_{44}\\
\\
-R_{32} = R_{32} \Longrightarrow R_{32} = 0, \qquad R_{42} = 0
\end{array}\right.$$
Again, the following requirement is met
\begin{eqnarray*}
& [D(e_3), e_4] + [e_3, D(e_4)] = D[e_3, e_4]\hspace{3.15cm} \\ \Leftrightarrow & \left[ R_{13}e_1 + R_{23}e_2 + \left( R_{33} - \eta\right)e_3 + R_{43}e_4, e_4\right] +\hspace{1.43cm}\\ & \left[ e_3, R_{14}e_1 + R_{24}e_2 + R_{34}e_3 + \left(R_{44} - \eta\right)e_4\right] = -D(e_3)\\
\Leftrightarrow& R_{23}e_2 - \left( R_{33} - \eta\right)e_3 - R_{24}e_1 - \left( R_{44} - \eta\right)e_3\hspace{1.5cm}\\
& = -R_{13}e_1 - R_{23}e_2 - \left( R_{33} - \eta\right)e_3 - R_{43}e_4\hspace{1.8cm}
\end{eqnarray*}
From the components of $e_i$, it follows that
$$\left\lbrace\begin{array}{lll}
R_{24} = R_{13}, \qquad R_{23} = -R_{23} \Longrightarrow R_{23} = 0\\
\\
R_{33} - \eta + R_{44} - \eta = R_{33} - \eta \Longrightarrow \eta = R_{44}\\
\\
R_{43} = 0
\end{array}\right.$$
According to the above three conditions, the derivation $D$ becomes
$$D = \begin{bmatrix}
R_{11} - R_{44} & R_{12} & R_{13} & R_{14}\\
\\
0 & R_{22} - R_{44} & 0 & R_{13}\\
\\
0 & 0 & R_{33} - R_{44} & R_{12}\\
\\
0 & 0 & 0 & 0
\end{bmatrix}.$$
A straightforward verification shows that the conditions $[D(e_1), e_i] + [e_1, D(e_i)] = 0$ for $i = 2, 3, 4$ are trivially satisfied by $D$. Thus, the theorem's conclusions follow.
\end{proof}
\subsection{The Lie algebra $\mathfrak{g}_{4.8}^{\alpha}$}
The Lie algebra $\mathfrak{g}_{4.8}^{\alpha}$ ($-1 < \alpha \leq 1$) has a basis $\mathcal{B} = \left\lbrace e_1, e_2, e_3, e_4\right\rbrace$ such that the nonzero Lie brackets are
$$[e_1, e_4] = (1 + \alpha)e_1, \qquad [e_2, e_4] = e_2, \qquad [e_3, e_4] = \alpha e_3, \qquad [e_2, e_3] = e_1.$$
\begin{theorem}
A pseudo-Riemannian inner product on $\mathfrak{g}_{4.8}^{\alpha}$ is an algebraic Ricci soliton if its Ricci operator satisfies the following identities
\begin{multicols}{2}
\begin{itemize}
\item $R_{i1} = 0, \; i = 2, 3, 4$
\item $R_{34} = -\alpha R_{12}$
\item $\alpha R_{32} = R_{32}$
\item $R_{42} = R_{43} = 0$
\item $R_{24} = R_{13}$
\item $\alpha R_{23} = R_{23}$
\item $R_{44} = R_{22} + R_{33} - R_{11}$
\end{itemize}
\end{multicols}
Under the stated conditions, the Ricci operator can be expressed as $\operatorname{Ric} = \eta I_4 + D$ where $\eta = R_{44} = R_{22} + R_{33} - R_{11}$ and
$$D = \begin{bmatrix}
R_{11} - R_{44} & R_{12} & R_{13} & R_{14}\\
\\
0 & R_{22} - R_{44} & R_{23} & R_{13}\\
\\
0 & R_{32} & R_{33} - R_{44} & -\alpha R_{12}\\
\\
0 & 0 & 0 & 0
\end{bmatrix}$$
is a derivation of $\mathfrak{g}_{4.8}^{\alpha}$ with respect to the basis $\mathcal{B}$. Note that if $R_{23} = R_{32} = 0$, then there is no additional condition over $\alpha$. But we see that if $R_{23} \neq 0$ or $R_{32} \neq 0$  then $\alpha = 1$.
\end{theorem}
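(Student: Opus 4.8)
The plan is to impose the Leibniz rule on the matrix $D$ of (\ref{derivation}) for each of the four nonzero brackets of $\mathfrak{g}_{4.8}^{\alpha}$, and then to exploit a vanishing bracket to close the argument. Throughout I read the images $D(e_j)$ off the columns of $D$, so that $D(e_1)=(R_{11}-\eta)e_1+R_{21}e_2+R_{31}e_3+R_{41}e_4$, and analogously for the other generators.

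First I would treat $[e_1,e_4]=(1+\alpha)e_1$. Expanding $[D(e_1),e_4]+[e_1,D(e_4)]=(1+\alpha)D(e_1)$ and comparing coefficients, the $e_1$-component gives $(R_{11}-\eta)+(R_{44}-\eta)=R_{11}-\eta$, whence $\eta=R_{44}$ after dividing by $1+\alpha\neq 0$ (this is where $\alpha>-1$ enters); the $e_3$- and $e_4$-components force $R_{31}=R_{41}=0$, while the $e_2$-component yields only $\alpha R_{21}=0$. Next, $[e_2,e_4]=e_2$ produces $R_{34}=-\alpha R_{12}$, $\alpha R_{32}=R_{32}$ and $R_{42}=0$; and $[e_3,e_4]=\alpha e_3$ produces $R_{24}=R_{13}$, $\alpha R_{23}=R_{23}$ and $\alpha R_{43}=0$. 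Finally $[e_2,e_3]=e_1$ gives, from the $e_1$-component, $\eta=R_{22}+R_{33}-R_{11}$, hence $R_{44}=R_{22}+R_{33}-R_{11}$, together with $R_{43}=R_{21}$ from the $e_2$-component (the $e_3$- and $e_4$-components being already satisfied since $R_{31}=R_{41}=R_{42}=0$).

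The hard part will be that the four nonzero brackets do not, on their own, force $R_{21}$ and $R_{43}$ to vanish: they leave the coupled relations $R_{43}=R_{21}$, $\alpha R_{21}=0$ and $\alpha R_{43}=0$, which are inconclusive precisely in the degenerate case $\alpha=0$. To overcome this I would invoke the \emph{vanishing} bracket $[e_1,e_3]=0$: the identity $[D(e_1),e_3]+[e_1,D(e_3)]=0$ reduces, using $R_{41}=0$, to $R_{21}+(1+\alpha)R_{43}=0$. Substituting $R_{43}=R_{21}$ gives $(2+\alpha)R_{21}=0$, and since $\alpha>-1$ forces $2+\alpha>0$, I conclude $R_{21}=R_{43}=0$ uniformly in $\alpha$. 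At this stage $D$ has exactly the form claimed in the theorem, and it remains only to verify that the last vanishing bracket $[e_1,e_2]=0$ imposes no further constraint, which is immediate once $R_{31}=R_{41}=R_{42}=0$. The concluding remark on $\alpha$ then follows by reading the surviving relations $\alpha R_{23}=R_{23}$ and $\alpha R_{32}=R_{32}$: they are vacuous when $R_{23}=R_{32}=0$, and force $\alpha=1$ as soon as either $R_{23}\neq 0$ or $R_{32}\neq 0$.
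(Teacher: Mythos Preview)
Your proposal is correct and follows essentially the same approach as the paper: you impose the Leibniz rule on $D$ for the four nonzero brackets in the same order, extract the same relations (including the incomplete ones $R_{43}=R_{21}$, $\alpha R_{21}=0$, $\alpha R_{43}=0$), and then use the vanishing bracket $[e_1,e_3]=0$ to obtain $(2+\alpha)R_{21}=0$, exactly as the paper does. Your explicit identification of the ``hard part'' and of where the hypothesis $\alpha>-1$ is used is accurate and matches the paper's argument line by line.
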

\begin{proof}
As a derivation of $\mathfrak{g}_{4.8}^{\alpha}$, $D$ in (\ref{derivation}) satisfies the following property
\begin{eqnarray*}
& [D(e_1), e_4] + [e_1, D(e_4)] = D[e_1, e_4] \hspace{6.25cm}\\ \Leftrightarrow & \left[ \left( R_{11} - \eta\right)e_1 + R_{21}e_2 + R_{31}e_3 + R_{41}e_4, e_4\right] + \hspace{4.5cm}\\ & \left[ e_1, R_{14}e_1 + R_{24}e_2 + R_{34}e_3 + \left(R_{44} - \eta\right)e_4\right] = (1 + \alpha)D(e_1)\hspace{2.1cm}\\
\Leftrightarrow& (1 + \alpha)\left( R_{11} - \eta\right)e_1 + R_{21}e_2 + \alpha R_{31}e_3 + (1 + \alpha)\left( R_{44} - \eta\right)e_1\hspace{1.7cm}\\
& = (1 + \alpha)\left( R_{11} - \eta\right)e_1 + (1 + \alpha)R_{21}e_2 + (1 + \alpha)R_{31}e_3 + (1 + \alpha)R_{41}e_4
\end{eqnarray*}
An analysis of the components of $e_i$ shows that
$$\left\lbrace\begin{array}{lll}
R_{11} - \eta + R_{44} - \eta = R_{11} - \eta \Longrightarrow \eta = R_{44} \\
\\
R_{21} = (1 + \alpha)R_{21}\\
\\
\alpha R_{31} = (1 + \alpha)R_{31} \Longrightarrow R_{31} = 0, \quad (1 + \alpha)R_{41} = 0 \Longrightarrow R_{41} = 0
\end{array}\right.$$
Also, $D$ satisfies the following property
\begin{eqnarray*}
& [D(e_2), e_4] + [e_2, D(e_4)] = D[e_2, e_4] \hspace{4.6cm}\\ \Leftrightarrow & \left[ R_{12}e_1 + \left( R_{22} - \eta\right)e_2 + R_{32}e_3 + R_{42}e_4, e_4\right] + \hspace{2.9cm}\\ & \left[ e_2, R_{14}e_1 + R_{24}e_2 + R_{34}e_3 + \left(R_{44} - \eta\right)e_4\right] = D(e_2)\hspace{1.8cm}\\
\Leftrightarrow& (1 + \alpha)R_{12}e_1 + \left( R_{22} - \eta\right)e_2 + \alpha R_{32}e_3 + R_{34}e_1 + \left( R_{44} - \eta\right)e_2\\
& = R_{12}e_1 + \left( R_{22} - \eta\right)e_2 + R_{32}e_3 + R_{42}e_4\hspace{3.7cm}
\end{eqnarray*}
Upon identifying the components of $e_i$, we conclude that
$$\left\lbrace\begin{array}{lll}
(1 + \alpha)R_{12} + R_{34} = R_{12} \Longrightarrow R_{34} = -\alpha R_{12} \\
\\
R_{22} - \eta + R_{44} - \eta = R_{22} - \eta \Longrightarrow \eta = R_{44}\\
\\
\alpha R_{32} = R_{32}, \qquad R_{42} = 0
\end{array}\right.$$
As well, we have
\begin{eqnarray*}
& [D(e_3), e_4] + [e_3, D(e_4)] = D[e_3, e_4] \hspace{4.95cm}\\ \Leftrightarrow & \left[ R_{13}e_1 + R_{23}e_2 + \left( R_{33} - \eta\right)e_3 + R_{43}e_4, e_4\right] + \hspace{3.23cm}\\ & \left[ e_3, R_{14}e_1 + R_{24}e_2 + R_{34}e_3 + \left(R_{44} - \eta\right)e_4\right] = \alpha D(e_3)\hspace{1.9cm}\\
\Leftrightarrow& (1 + \alpha)R_{13}e_1 + R_{23}e_2 + \alpha\left( R_{33} - \eta\right)e_3 - R_{24}e_1 + \alpha\left( R_{44} - \eta\right)e_3\\
& = \alpha R_{13}e_1 + \alpha R_{23}e_2 + \alpha\left( R_{33} - \eta\right)e_3 + \alpha R_{43}e_4\hspace{3cm}
\end{eqnarray*}
From the components of $e_i$, we obtain
$$\left\lbrace\begin{array}{lll}
(1 + \alpha)R_{13} - R_{24} = \alpha R_{13} \Longrightarrow R_{24} = R_{13} \\
\\
R_{23} = \alpha R_{23}\\
\\
\alpha(R_{33} - \eta) + \alpha(R_{44} - \eta) = \alpha(R_{33} - \eta)\\
\\
\alpha R_{43} = 0
\end{array}\right.$$
Proceeding analogously, $D$ satisfies the following property (note that $R_{42} = 0$)
\begin{eqnarray*}
& [D(e_2), e_3] + [e_2, D(e_3)] = D[e_2, e_3] \hspace{4.1cm}\\ \Leftrightarrow & \left[ R_{12}e_1 + \left( R_{22} - \eta\right)e_2 + R_{32}e_3, e_3\right] +\hspace{3.88cm}\\ & \left[ e_2, R_{13}e_1 + R_{23}e_2 + \left( R_{33} - \eta\right)e_3 + R_{43}e_4\right] = D(e_1)\hspace{1.3cm}\\
\Leftrightarrow& \left( R_{22} - \eta\right)e_1 + \left( R_{33} - \eta\right)e_1 + R_{43}e_2 = \left( R_{11} - \eta\right)e_1 + R_{21}e_2
\end{eqnarray*}
By identifying each component of $e_i$, we deduce that
$$\left\lbrace\begin{array}{lll}
R_{22} - \eta + R_{33} - \eta = R_{11} - \eta \Longrightarrow \eta = R_{22} + R_{33} - R_{11} \\
\\
R_{43} = R_{21}
\end{array}\right.$$
Next, the condition $[D(e_1), e_2] + [e_1, D(e_2)] = 0$ is trivial. But we have
\begin{eqnarray*}
& [D(e_1), e_3] + [e_1, D(e_3)] = 0 \hspace{5.88cm}\\ \Leftrightarrow & \left[ R_{21}e_2, e_3\right] + \left[ e_1, R_{13}e_1 + R_{23}e_2 + \left( R_{33} - \eta\right)e_3 + R_{43}e_4\right] = 0\hspace{0.38cm}\\
\Leftrightarrow & R_{21}e_1 + \left( 1 + \alpha\right)R_{43}e_1 = 0 \Leftrightarrow R_{21} + \left( 1 + \alpha\right)R_{43} = 0\hspace{1.7cm}\\
\Leftrightarrow & R_{21} = -(1 + \alpha)R_{21}\; (\text{because}\; R_{43} = R_{21}) \Longrightarrow R_{21} = R_{43} = 0 
\end{eqnarray*}
Thus, we have established the desired result.
\end{proof}
\subsection{The Lie algebra $\mathfrak{g}_{4.9}^{0}$}
The Lie algebra $\mathfrak{g}_{4.9}^{0}$ has a basis $\mathcal{B} = \left\lbrace e_1, e_2, e_3, e_4\right\rbrace$ such that the nonzero Lie brackets are 
$$[e_2, e_3] = e_1, \qquad [e_2, e_4] = -e_3, \qquad [e_3, e_4] = e_2.$$
\begin{theorem}
A pseudo-Riemannian inner product on $\mathfrak{g}_{4.9}^{0}$ is an algebraic Ricci soliton if its Ricci operator satisfies
\begin{multicols}{2}
\begin{itemize}
\item $R_{i1} = 0, \; i = 2, 3, 4$
\item $R_{42} = R_{43} = 0$
\item $R_{34} = -R_{13}$
\item $R_{32} = -R_{32}$
\item $R_{24} = -R_{12}$
\item $R_{44} = R_{22} + R_{33} - R_{11}$
\end{itemize}
\end{multicols}
Provided these conditions are satisfied, the Ricci operator can take the form\\ $\operatorname{Ric} = \eta I_4 + D$ where $\eta = R_{44} = R_{22} + R_{33} - R_{11}$ and
$$D = \begin{bmatrix}
R_{11} - R_{44} & R_{12} & R_{13} & R_{14}\\
\\
0 & R_{22} - R_{44} & R_{23} & -R_{12}\\
\\
0 & -R_{23} & R_{33} - R_{44} & -R_{13}\\
\\
0 & 0 & 0 & 0
\end{bmatrix}$$
is a derivation of $\mathfrak{g}_{4.9}^{0}$ with respect to the basis $\mathcal{B}$.
\end{theorem}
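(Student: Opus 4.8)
The plan is to establish the sufficiency directly. Assuming the displayed relations on the Ricci entries, I set $\eta = R_{44}$ (which by hypothesis equals $R_{22}+R_{33}-R_{11}$) and define $D = \operatorname{Ric} - \eta I_4$ as in (\ref{derivation}). Feeding in the hypotheses $R_{21}=R_{31}=R_{41}=0$, $R_{42}=R_{43}=0$, $R_{24}=-R_{12}$, $R_{34}=-R_{13}$, $R_{32}=-R_{23}$ and $R_{44}=\eta$ collapses $D$ to the matrix written in the theorem, so its last row vanishes and its image lies in $\operatorname{span}\{e_1,e_2,e_3\}$. What then remains is solely to check that this explicit $D$ obeys the Leibniz rule; once it does, $\operatorname{Ric}=\eta I_4+D$ is the sought decomposition and the inner product is an algebraic Ricci soliton.

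Because $D$ is linear and the derivation identity is bilinear and skew in its arguments, it suffices to verify $D[e_i,e_j]=[D(e_i),e_j]+[e_i,D(e_j)]$ on the six unordered basis pairs. Three of them carry the only nonzero brackets $[e_2,e_3]=e_1$, $[e_2,e_4]=-e_3$, $[e_3,e_4]=e_2$; the other three, $\{e_1,e_2\},\{e_1,e_3\},\{e_1,e_4\}$, have zero bracket. I would dispose of the latter immediately: $e_1$ is central, so $[e_1,D(e_j)]$ vanishes (as $D(e_j)\in\operatorname{span}\{e_1,e_2,e_3\}$ and $e_1$ commutes with $e_1,e_2,e_3$), and $[D(e_1),e_j]=(R_{11}-\eta)[e_1,e_j]=0$. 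These pairs thus carry no constraint.

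The content sits in the three nonzero brackets, which I would expand column by column. The bracket $[e_2,e_3]$ gives $[D(e_2),e_3]+[e_2,D(e_3)]=\bigl((R_{22}-\eta)+(R_{33}-\eta)\bigr)e_1$, and this matches $D(e_1)=(R_{11}-\eta)e_1$ exactly because $\eta=R_{22}+R_{33}-R_{11}$; this is the role of that scalar relation. The two remaining brackets $[e_2,e_4]$ and $[e_3,e_4]$ are governed by the operator $P:u\mapsto[u,e_4]$, which restricts on $V=\operatorname{span}\{e_2,e_3\}$ to the rotation generator $\left[\begin{smallmatrix}0&1\\-1&0\end{smallmatrix}\right]$. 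For $u\in V$, the identity $D(Pu)=[D(u),e_4]+[u,D(e_4)]$ splits into an $e_1$-component and a $V$-component: the $e_1$-component matches automatically owing to $R_{24}=-R_{12}$ and $R_{34}=-R_{13}$, while the $V$-component reduces to the commutation $D_0P=PD_0$, where $D_0=\left[\begin{smallmatrix}R_{22}-\eta & R_{23}\\ -R_{23}& R_{33}-\eta\end{smallmatrix}\right]$ is the middle block of $D$.

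The hard part will be this commutation. A direct computation shows that $D_0$ commutes with $\left[\begin{smallmatrix}0&1\\-1&0\end{smallmatrix}\right]$ precisely when $R_{22}=R_{33}$, the antisymmetric off-diagonal $\pm R_{23}$ being already built into $D_0$. I therefore expect the decisive step to be confirming that the two rotational brackets force the diagonal entries $R_{22}-\eta$ and $R_{33}-\eta$ to coincide, and I would record the equality $R_{22}=R_{33}$ explicitly as the condition these brackets impose. With the commutation in hand, the Leibniz rule holds on every bracket, $D$ is a derivation, and the advertised decomposition $\operatorname{Ric}=\eta I_4+D$ certifies that the inner product is an algebraic Ricci soliton.
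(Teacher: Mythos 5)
Your proposal is correct in substance but runs in a genuinely different direction from the paper. The paper argues from necessity: it posits $\operatorname{Ric} = \eta I_4 + D$ with $D$ a derivation, expands the Leibniz rule on each of the three nonzero brackets, and matches coefficients of $e_1,\dots,e_4$, obtaining the three relations $\eta = R_{22}+R_{33}-R_{11}$, $\eta = R_{22}+R_{44}-R_{33}$, $\eta = R_{33}+R_{44}-R_{22}$ together with the vanishing conditions, and then combines them to get $\eta = R_{44}$. Your verification is a sufficiency check, and it is better organized: you dispose of all pairs containing the central element $e_1$ in one stroke, isolate the role of $R_{24}=-R_{12}$ and $R_{34}=-R_{13}$ in the $e_1$-components, and reduce the two rotational brackets to the commutation $D_0P = PD_0$ of the middle block with the rotation generator on $V = \operatorname{span}\{e_2,e_3\}$. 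This makes visible exactly where each hypothesis is consumed, which the paper's entry-by-entry identification does not.

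Your decisive observation --- that $D_0$ commutes with $P$ precisely when $R_{22}=R_{33}$ --- is correct, and it exposes a real defect in the statement as printed: $R_{22}=R_{33}$ does \emph{not} follow from the listed bullets (the item ``$R_{32}=-R_{32}$'' is evidently a typo for $R_{32}=-R_{23}$, which you silently and correctly adopted), yet without it the displayed $D$ violates the Leibniz rule on $[e_2,e_4]=-e_3$: one gets $[D(e_2),e_4]+[e_2,D(e_4)] = -R_{13}e_1 - R_{23}e_2 - (R_{22}-R_{44})e_3$, whereas $D(-e_3) = -R_{13}e_1 - R_{23}e_2 - (R_{33}-R_{44})e_3$. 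The paper's own proof in fact derives the missing relation: its second and third bracket computations give $\eta = R_{22}+R_{44}-R_{33}$ and $\eta = R_{33}+R_{44}-R_{22}$, which jointly yield not only $\eta = R_{44}$ (the only consequence the paper records) but also $R_{22}=R_{33}$; compare the analogous theorem for $\mathfrak{g}_{4.9}^{\alpha}$, which does list $R_{22}=R_{33}$ and accordingly displays equal middle diagonal entries $R_{22}-R_{44}$ in $D$. So your instinct to ``record the equality $R_{22}=R_{33}$ explicitly'' is exactly the needed correction. One caution on phrasing: in the sufficiency direction nothing is ``forced'' by the brackets --- the relation cannot be derived from the stated hypotheses and must simply be added to them, after which your commutation argument closes the proof.
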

\begin{proof}
Because $D$ in (\ref{derivation}) is a derivation of $\mathfrak{g}_{4.9}^{0}$, we have
\begin{eqnarray*}
& [D(e_2), e_3] + [e_2, D(e_3)] = D[e_2, e_3] \hspace{8.7cm}\\ \Leftrightarrow & \left[ R_{12}e_1 + \left( R_{22} - \eta\right)e_2 + R_{32}e_3 + R_{42}e_4, e_3\right] + \hspace{6.95cm}\\ & \left[ e_2, R_{13}e_1 + R_{23}e_2 + \left(R_{33} - \eta\right)e_3 + R_{43}e_4\right] = D(e_1)\hspace{5.85cm}\\
\Leftrightarrow& \left( R_{22} - \eta\right)e_1 - R_{42}e_2 + \left( R_{33} - \eta\right)e_1 - R_{43}e_3 = \left( R_{11} - \eta\right)e_1 + R_{21}e_2 + R_{31}e_3 + R_{41}e_4
\end{eqnarray*}
By identifying the components of $e_i$, we obtain that
$$\left\lbrace\begin{array}{lll}
R_{22} - \eta + R_{33} - \eta = R_{11} - \eta \Longrightarrow \eta = R_{22} + R_{33} - R_{11} \\
\\
R_{42} = -R_{21}, \qquad R_{43} = -R_{31}, \qquad R_{41} = 0
\end{array}\right.$$
By the same argument, we have
\begin{eqnarray*}
& [D(e_2), e_4] + [e_2, D(e_4)] = D[e_2, e_4] \hspace{3.15cm}\\ \Leftrightarrow & \left[ R_{12}e_1 + \left( R_{22} - \eta\right)e_2 + R_{32}e_3 + R_{42}e_4, e_4\right] + \hspace{1.4cm}\\ & \left[ e_2, R_{14}e_1 + R_{24}e_2 + R_{34}e_3 + \left(R_{44} - \eta\right)e_4\right] = -D(e_3)\\
\Leftrightarrow& -\left( R_{22} - \eta\right)e_3 + R_{32}e_2 + R_{34}e_1 - \left( R_{44} - \eta\right)e_3\hspace{1.1cm}\\
& = -R_{13}e_1 - R_{23}e_2 - \left( R_{33} - \eta\right)e_3 - R_{43}e_4\hspace{1.75cm}
\end{eqnarray*}
By examining the components of $e_i$, we find that
$$\left\lbrace\begin{array}{lll}
R_{34} = -R_{13}, \qquad R_{32} = -R_{23}\\
\\
R_{22} - \eta + R_{44} - \eta = R_{33} - \eta \Longrightarrow \eta = R_{22} + R_{44} - R_{33}\\
\\
R_{43} = 0
\end{array}\right.$$
Proceeding analogously, we have
\begin{eqnarray*}
& [D(e_3), e_4] + [e_3, D(e_4)] = D[e_3, e_4] \hspace{2.83cm}\\ \Leftrightarrow & \left[ R_{13}e_1 + R_{23}e_2 + \left( R_{33} - \eta\right)e_3 + R_{43}e_4, e_4\right] +\hspace{1.1cm}\\ & \left[ e_3, R_{14}e_1 + R_{24}e_2 + R_{34}e_3 + \left(R_{44} - \eta\right)e_4\right] = D(e_2)\\
\Leftrightarrow& -R_{23}e_3 + \left( R_{33} - \eta\right)e_2 - R_{24}e_1 + \left( R_{44} - \eta\right)e_2\hspace{0.9cm}\\
& = R_{12}e_1 + \left( R_{22} - \eta\right)e_2 + R_{32}e_3 + R_{42}e_4\hspace{1.95cm}
\end{eqnarray*}
Identifying the components of $e_i$ yields
$$\left\lbrace\begin{array}{lll}
R_{24} = -R_{12}\\
\\
R_{33} - \eta + R_{44} - \eta = R_{22} - \eta \Longrightarrow \eta = R_{33} + R_{44} - R_{22}\\
\\
R_{32} = -R_{23}, \qquad R_{42} = 0
\end{array}\right.$$
By the two later bracket conditions, we see that
$$\eta = R_{22} + R_{44} - R_{33} = R_{33} + R_{44} - R_{22} \Longrightarrow 2\eta = \eta + \eta = 2R_{44} \Longrightarrow \eta = R_{44}.$$
By the first bracket condition, we have $\eta = R_{22} + R_{33} - R_{11}$. Hence 
$$\eta = R_{44} = R_{22} + R_{33} - R_{11}.$$
We have $R_{42} = -R_{21} = 0$ and $R_{43} = -R_{31} = 0$. We replace all the obtained equalities in the derivation $D$, it becomes
$$D = \begin{bmatrix}
R_{11} - R_{44} & R_{12} & R_{13} & R_{14}\\
\\
0 & R_{22} - R_{44} & R_{23} & -R_{12}\\
\\
0 & -R_{23} & R_{33} - R_{44} & -R_{13}\\
\\
0 & 0 & 0 & 0
\end{bmatrix}.$$
Next, it is easy to see that the conditions $[D(e_1), e_i] + [e_1, D(e_i)] = 0$ for $i = 2, 3, 4$ are trivially satisfied by $D$. We have therefore obtained the conditions stated in the theorem.
\end{proof}
\subsection{The Lie algebra $\mathfrak{g}_{4.9}^{\alpha}$}
The Lie algebra $\mathfrak{g}_{4.9}^{\alpha}$ ($\alpha > 0$) has a basis $\mathcal{B} = \left\lbrace e_1, e_2, e_3, e_4\right\rbrace$ such that the nonzero Lie brackets are
$$[e_1, e_4] = 2\alpha e_1, \qquad [e_2, e_4] = \alpha e_2 - e_3, \qquad [e_3, e_4] = e_2 + \alpha e_3, \qquad [e_2, e_3] = e_1.$$
\begin{theorem}
A pseudo-Riemannian inner product on $\mathfrak{g}_{4.9}^{\alpha}$ is an algebraic Ricci soliton if its Ricci operator satisfies
\begin{multicols}{2}
\begin{itemize}
\item $R_{i1} = 0, \; i = 2, 3, 4$
\item $R_{42} = R_{43} = 0$
\item $R_{34} = -\alpha R_{12} - R_{13}$
\item $R_{32} = -R_{32}$
\item $R_{22} = R_{33}$
\item $R_{24} = -R_{12} + \alpha R_{13}$
\item $R_{44} = R_{22} + R_{33} - R_{11}$
\end{itemize}
\end{multicols}
In this case, the Ricci operator can be written as $\operatorname{Ric} = \eta I_4 + D$ where\\ $\eta = R_{44} = R_{22} + R_{33} - R_{11}$ and 
$$D = \begin{bmatrix}
R_{11} - R_{44} & R_{12} & R_{13} & R_{14}\\
\\
0 & R_{22} - R_{44} & R_{23} & -R_{12} + \alpha R_{13}\\
\\
0 & -R_{23} & R_{22} - R_{44} & -\alpha R_{12} - R_{13}\\
\\
0 & 0 & 0 & 0
\end{bmatrix}$$
is a derivation of $\mathfrak{g}_{4.9}^{\alpha}$ with respect to the basis $\mathcal{B}$.
\end{theorem}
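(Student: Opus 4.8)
The plan is to verify directly that, under the stated conditions on the entries $R_{ij}$, the operator $D := \operatorname{Ric} - \eta\, I_4$ with $\eta = R_{44}$ is a derivation of $\mathfrak{g}_{4.9}^{\alpha}$; since then $\operatorname{Ric} = \eta I_4 + D$, this exhibits the inner product as an algebraic Ricci soliton. First I would substitute the hypotheses $R_{i1}=0$ ($i=2,3,4$), $R_{42}=R_{43}=0$, $R_{22}=R_{33}$, $R_{32}=-R_{23}$, $R_{24}=-R_{12}+\alpha R_{13}$, $R_{34}=-\alpha R_{12}-R_{13}$, and $R_{44}=R_{22}+R_{33}-R_{11}$ into the general form (\ref{derivation}) to confirm that $D$ reduces to the displayed matrix, and read off the images
$$D(e_1)=(R_{11}-R_{44})e_1,\qquad D(e_2)=R_{12}e_1+(R_{22}-R_{44})e_2-R_{23}e_3,$$
$$D(e_3)=R_{13}e_1+R_{23}e_2+(R_{22}-R_{44})e_3,\qquad D(e_4)=R_{14}e_1+(-R_{12}+\alpha R_{13})e_2+(-\alpha R_{12}-R_{13})e_3.$$
Because the bracket and the Leibniz expression are both bilinear, it suffices to check the derivation identity $D[e_i,e_j]=[D(e_i),e_j]+[e_i,D(e_j)]$ on the six basis pairs.

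I would organise these into the four nonzero brackets $[e_1,e_4]=2\alpha e_1$, $[e_2,e_4]=\alpha e_2-e_3$, $[e_3,e_4]=e_2+\alpha e_3$, $[e_2,e_3]=e_1$, and the two vanishing brackets $[e_1,e_2]=[e_1,e_3]=0$. The vanishing pairs are immediate: since $D(e_1)$ is a multiple of $e_1$ and $e_1$ brackets nontrivially only with $e_4$, every term on the right-hand side of the identity for $(1,2)$ and $(1,3)$ reduces to a multiple of the vanishing brackets $[e_1,e_2]$ or $[e_1,e_3]$, matching $D(0)=0$. The pair $[e_1,e_4]$ is also short: $[D(e_1),e_4]=2\alpha(R_{11}-R_{44})e_1$ and $[e_1,D(e_4)]=0$, since the $e_2,e_3$-components of $D(e_4)$ bracket trivially with $e_1$, so the sum reproduces $2\alpha D(e_1)$.

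The three remaining identities are where the diagonal and off-diagonal hypotheses are used. For $[e_2,e_3]=e_1$, each of $[D(e_2),e_3]$ and $[e_2,D(e_3)]$ contributes $(R_{22}-R_{44})e_1$ through $[e_2,e_3]=e_1$, so the right-hand side is $2(R_{22}-R_{44})e_1$, which must equal $D(e_1)=(R_{11}-R_{44})e_1$; this forces $2R_{22}-R_{44}=R_{11}$, precisely the relation obtained by combining $R_{22}=R_{33}$ with $R_{44}=R_{22}+R_{33}-R_{11}$. For $[e_2,e_4]$ and $[e_3,e_4]$ the $e_2,e_3$-components close thanks to the antisymmetric pairing $R_{32}=-R_{23}$ together with $R_{22}=R_{33}$, while the $e_1$-components are the delicate point: $[e_2,D(e_4)]$ contributes $R_{34}e_1$ via $[e_2,e_3]=e_1$ and $[e_3,D(e_4)]$ contributes $-R_{24}e_1$ via $[e_3,e_2]=-e_1$, and these must combine with the $2\alpha R_{12}e_1$ and $2\alpha R_{13}e_1$ terms from $[D(e_2),e_4]$ and $[D(e_3),e_4]$ to produce the correct $e_1$-components of $D[e_2,e_4]=\alpha D(e_2)-D(e_3)$ and $D[e_3,e_4]=D(e_2)+\alpha D(e_3)$, which holds exactly because $R_{34}=-\alpha R_{12}-R_{13}$ and $R_{24}=-R_{12}+\alpha R_{13}$.

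The main obstacle I anticipate is the bookkeeping of these $e_1$-components in the $[e_2,e_4]$ and $[e_3,e_4]$ identities: this is the only place where the $\alpha$-rotation acting in the $(e_2,e_3)$-plane feeds into the central direction $e_1$ through $[e_2,e_3]=e_1$, so the prescribed fourth-column entries $R_{24}$ and $R_{34}$ (and no other choice) are exactly what makes the Leibniz rule close. Once these cancellations are verified together with the diagonal relation coming from $[e_2,e_3]$, all six pairs are checked, $D$ is a derivation, and the inner product is an algebraic Ricci soliton.
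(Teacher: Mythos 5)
Your verification is correct, and every computation you sketch checks out: with the hypotheses substituted, the images $D(e_1)=(R_{11}-R_{44})e_1$, $D(e_2)=R_{12}e_1+(R_{22}-R_{44})e_2-R_{23}e_3$, $D(e_3)=R_{13}e_1+R_{23}e_2+(R_{22}-R_{44})e_3$, $D(e_4)=R_{14}e_1+(-R_{12}+\alpha R_{13})e_2+(-\alpha R_{12}-R_{13})e_3$ are as you state, the pairs $(e_1,e_2)$, $(e_1,e_3)$, $(e_1,e_4)$ close trivially (note this uses $R_{42}=R_{43}=0$ and $\eta=R_{44}$, so that $D(e_2),D(e_3),D(e_4)$ have no $e_4$-component), the $[e_2,e_3]$ identity reduces to $2(R_{22}-R_{44})=R_{11}-R_{44}$, which is the combination of $R_{22}=R_{33}$ and $R_{44}=R_{22}+R_{33}-R_{11}$, and the $e_1$-components in the $[e_2,e_4]$ and $[e_3,e_4]$ identities close exactly because $2\alpha R_{12}+R_{34}=\alpha R_{12}-R_{13}$ and $2\alpha R_{13}-R_{24}=R_{12}+\alpha R_{13}$, i.e.\ your prescribed $R_{34}$ and $R_{24}$. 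You also silently corrected the misprinted bullet ``$R_{32}=-R_{32}$'' to $R_{32}=-R_{23}$, which is the reading consistent with the displayed matrix. Where you genuinely differ from the paper is the direction of the argument: the paper starts from the \emph{unconstrained} matrix (\ref{derivation}), imposes the Leibniz rule on each bracket, and solves the resulting linear system, so the listed conditions emerge as \emph{necessary} — for instance $R_{21}=R_{31}=0$ is forced by $\alpha R_{31}=\alpha^2R_{21}=-R_{21}$ with $\alpha>0$, and $R_{42}=R_{43}=0$ by $\alpha R_{42}=-\alpha^2R_{43}=R_{43}$ — whereas you assume the conditions and verify sufficiency. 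Your direction is precisely what the theorem's ``if'' asserts and is arguably cleaner, since nothing has to be solved; the paper's direction buys the converse, namely that \emph{any} algebraic Ricci soliton on $\mathfrak{g}_{4.9}^{\alpha}$ must satisfy these conditions, which upgrades the statement to a characterization. The computational core — the Leibniz identity evaluated on the six pairs of basis vectors — is the same in both.
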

\begin{proof}
From the fact that $D$ in (\ref{derivation}) is a derivation of $\mathfrak{g}_{4.9}^{\alpha}$, the following rule holds
\begin{eqnarray*}
& [D(e_1), e_4] + [e_1, D(e_4)] = D[e_1, e_4] \hspace{5.58cm}\\ \Leftrightarrow & \left[ \left( R_{11} - \eta\right)e_1 + R_{21}e_2 + R_{31}e_3 + R_{41}e_4, e_4\right] + \hspace{3.85cm}\\ & \left[ e_1, R_{14}e_1 + R_{24}e_2 + R_{34}e_3 + \left(R_{44} - \eta\right)e_4\right] = 2\alpha D(e_1)\hspace{2.3cm}\\
\Leftrightarrow& 2\alpha\left( R_{11} - \eta\right)e_1 + R_{21}(\alpha e_2 - e_3) + R_{31}(e_2 + \alpha e_3) + 2\alpha\left( R_{44} - \eta\right)e_1\\
& = 2\alpha\left( R_{11} - \eta\right)e_1 + 2\alpha R_{21}e_2 + 2\alpha R_{31}e_3 + 2\alpha R_{41}e_4 \hspace{2.7cm}
\end{eqnarray*}
From the components of $e_i$, it follows that
$$\left\lbrace\begin{array}{lll}
R_{11} - \eta + R_{44} - \eta = R_{11} - \eta \Longrightarrow \eta = R_{44} \\
\\
\alpha R_{21} + R_{31} = 2\alpha R_{21} \Longrightarrow R_{31} = \alpha R_{21}\\
\\
-R_{21} + \alpha R_{31} = 2\alpha R_{31} \Longrightarrow \alpha R_{31} = -R_{21}, \qquad R_{41} = 0
\end{array}\right.$$
We see that: $\alpha R_{31} = \alpha^2 R_{21} = -R_{21}$, then $R_{21} = R_{31} = 0$.
In a similar manner, $D$ satisfies the following rule
\begin{eqnarray*}
& [D(e_2), e_4] + [e_2, D(e_4)] = D[e_2, e_4] \hspace{9.1cm}\\ \Leftrightarrow & \left[ R_{12}e_1 + \left( R_{22} - \eta\right)e_2 + R_{32}e_3 + R_{42}e_4, e_4\right] + \hspace{7.4cm}\\ & \left[ e_2, R_{14}e_1 + R_{24}e_2 + R_{34}e_3 + \left(R_{44} - \eta\right)e_4\right] = \alpha D(e_2) - D(e_3)\hspace{4.5cm}\\
\Leftrightarrow& 2\alpha R_{12}e_1 + \left( R_{22} - \eta\right)(\alpha e_2 - e_3) + R_{32}(e_2 + \alpha e_3) + R_{34}e_1 + \left( R_{44} - \eta\right)(\alpha e_2 - e_3)\hspace{1.2cm}\\
& = \left( \alpha R_{12} - R_{13}\right)e_1 + \left( \alpha(R_{22} - \eta) - R_{23}\right)e_2 + \left( \alpha R_{32} - (R_{33} - \eta)\right)e_3 + \left( \alpha R_{42} - R_{43}\right)e_4
\end{eqnarray*}
An analysis of the components of $e_i$ shows that
$$\left\lbrace\begin{array}{lll}
2\alpha R_{12} + R_{34} = \alpha R_{12} - R_{13} \Longrightarrow R_{34} = -\alpha R_{12} - R_{13} \\
\\
\alpha\left(R_{22} - \eta \right) + R_{32} + \alpha\left(R_{44} - \eta \right) = \alpha\left(R_{22} - \eta \right) - R_{23} \overset{\eta = R_{44}}{\Longrightarrow} R_{32} = -R_{23}\\
\\
-\left(R_{22} - \eta \right) + \alpha R_{32} - \left(R_{44} - \eta \right) = \alpha R_{32} - \left(R_{33} - \eta \right) \overset{\eta = R_{44}}{\Longrightarrow} R_{22} = R_{33}\\
\\
\alpha R_{42} = R_{43}
\end{array}\right.$$
Likewise, we have
\begin{eqnarray*}
& [D(e_3), e_4] + [e_3, D(e_4)] = D[e_3, e_4] \hspace{9.1cm}\\ \Leftrightarrow & \left[ R_{13}e_1 + R_{23}e_2 + \left( R_{33} - \eta\right)e_3 + R_{43}e_4, e_4\right] + \hspace{7.4cm}\\ & \left[ e_3, R_{14}e_1 + R_{24}e_2 + R_{34}e_3 + \left(R_{44} - \eta\right)e_4\right] = D(e_2) + \alpha D(e_3)\hspace{4.5cm}\\
\Leftrightarrow& 2\alpha R_{13}e_1 + R_{23}(\alpha e_2 - e_3) + \left( R_{33} - \eta\right)(e_2 + \alpha e_3) - R_{24}e_1 + \left( R_{44} - \eta\right)(e_2 + \alpha e_3)\hspace{1.2cm}\\
& = \left( R_{12} + \alpha R_{13}\right)e_1 + \left( R_{22} - \eta + \alpha R_{23}\right)e_2 + \left( R_{32} + \alpha(R_{33} - \eta)\right)e_3 + \left( R_{42} + \alpha R_{43}\right)e_4
\end{eqnarray*}
Upon identifying the components of $e_i$, we conclude that
$$\left\lbrace\begin{array}{lll}
2\alpha R_{13} - R_{24} = R_{12} + \alpha R_{13} \Longrightarrow R_{24} = -R_{12} + \alpha R_{13} \\
\\
\alpha R_{23} + R_{33} - \eta + R_{44} - \eta = R_{22} - \eta + \alpha R_{23} \Longrightarrow \eta = R_{33} + R_{44} - R_{22}\\
\\
-R_{23} + \alpha\left(R_{33} - \eta \right) + \alpha\left(R_{44} - \eta \right) = R_{32} + \alpha\left(R_{33} - \eta \right) \overset{\eta = R_{44}}{\Longrightarrow} R_{32} = -R_{23}\\
\\
R_{42} = -\alpha R_{43}
\end{array}\right.$$
We remark that $\alpha R_{42} = -\alpha^2R_{43} = R_{43}$, which implies that $R_{43} = R_{42} = 0$. Next, we move to the following property:
\begin{eqnarray*}
& [D(e_2), e_3] + [e_2, D(e_3)] = D[e_2, e_3] \hspace{4.4cm}\\ \Leftrightarrow & \left[ R_{12}e_1 + \left( R_{22} - \eta\right)e_2 + R_{32}e_3 + R_{42}e_4, e_3\right] + \hspace{2.65cm}\\ & \left[ e_2, R_{13}e_1 + R_{23}e_2 + \left(R_{33} - \eta\right)e_3 + R_{43}e_4\right] = D(e_1)\hspace{1.6cm}\\
\Leftrightarrow& \left( R_{22} - \eta\right)e_1 - R_{42}(e_2 + \alpha e_3) + \left( R_{33} - \eta\right)e_1 + R_{43}(\alpha e_2 - e_3)\\
& = \left( R_{11} - \eta\right)e_1 + R_{21}e_2 + R_{31}e_3 + R_{41}e_4\hspace{3.5cm}
\end{eqnarray*}
From the components of $e_i$, we obtain
$$\left\lbrace\begin{array}{lll}
R_{22} - \eta + R_{33} - \eta = R_{11} - \eta \Longrightarrow \eta = R_{22} + R_{33} - R_{11} \\
\\
-R_{42} + \alpha R_{43} = R_{21} \Longrightarrow \alpha R_{43} = R_{42} + R_{21}\\
\\
-\alpha R_{42} - R_{43} = R_{31} \Longrightarrow R_{43} = -\alpha R_{42} - R_{31}\\
\\
R_{41} = 0
\end{array}\right.$$
But we have $R_{21} = R_{31} = R_{42} = R_{43} = 0$. According to the previous observations, the derivation $D$ becomes 
$$D = \begin{bmatrix}
R_{11} - R_{44} & R_{12} & R_{13} & R_{14}\\
\\
0 & R_{22} - R_{44} & R_{23} & -R_{12} + \alpha R_{13}\\
\\
0 & -R_{23} & R_{22} - R_{44} & -\alpha R_{12} - R_{13}\\
\\
0 & 0 & 0 & 0
\end{bmatrix}.$$
Next, it is easy to see that the conditions $[D(e_1), e_i] + [e_1, D(e_i)] = 0$ for $i = 2, 3$ are trivially satisfied by $D$. Therefore, the conclusion of the theorem follows
\end{proof}
\subsection{The Lie algebra $\mathfrak{g}_{4.10}$}
The Lie algebra $\mathfrak{g}_{4.10}$ has a basis $\mathcal{B} = \left\lbrace e_1, e_2, e_3, e_4\right\rbrace$ such that the nonzero Lie brackets are
$$[e_1, e_3] = e_1, \qquad [e_2, e_3] = e_2, \qquad [e_1, e_4] = -e_2, \qquad [e_2, e_4] = e_1.$$
\begin{theorem}
A pseudo-Riemannian inner product on $\mathfrak{g}_{4.10}$ is an algebraic Ricci soliton precisely when the associated Ricci operator satisfies
\begin{multicols}{2}
\begin{itemize}
\item $R_{31} = R_{41} = R_{32} = 0$
\item $R_{42} = R_{43} = R_{34} = 0$
\item $R_{33} = R_{44}$
\item $R_{22} = R_{11}$
\item $R_{21} = -R_{12}$
\item $R_{14} = R_{23}$
\item $R_{24} = -R_{13}$
\end{itemize}
\end{multicols}
Given these conditions, the Ricci operator can be described as $\operatorname{Ric} = \eta I_4 + D$ where $\eta = R_{44} = R_{33}$ and
$$D = \begin{bmatrix}
R_{11} - R_{33} & R_{12} & R_{13} & R_{23}\\
\\
-R_{12} & R_{11} - R_{33} & R_{23} & -R_{13}\\
\\
0 & 0 & 0 & 0\\
\\
0 & 0 & 0 & 0
\end{bmatrix}$$
is a derivation of $\mathfrak{g}_{4.10}$ with respect to the basis $\mathcal{B}$.
\end{theorem}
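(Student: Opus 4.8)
The plan is to exploit the central reduction already set up in the preliminaries: writing $\operatorname{Ric} = \eta I_4 + D$, the endomorphism $D$ is precisely the matrix (\ref{derivation}), so the soliton condition is equivalent to requiring that this $D$ be a derivation of $\mathfrak{g}_{4.10}$. Reading the images $D(e_j)$ off the columns of (\ref{derivation}), I would impose the Leibniz rule $D[e_i,e_j] = [D(e_i),e_j] + [e_i,D(e_j)]$ on each of the four nonzero brackets $[e_1,e_3]=e_1$, $[e_2,e_3]=e_2$, $[e_1,e_4]=-e_2$, $[e_2,e_4]=e_1$, together with the two vanishing brackets $[e_1,e_2]=0$ and $[e_3,e_4]=0$, and then collect the scalar relations obtained by comparing coefficients of $e_1,\dots,e_4$.

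First I would treat the two brackets involving $e_3$, which are the easy, decoupled ones. Since $e_3$ acts diagonally on $\mathrm{span}(e_1,e_2)$, the relation for $[e_1,e_3]=e_1$ forces $\eta = R_{33}$ and kills the spurious third- and fourth-component contributions, giving $R_{31}=R_{41}=R_{43}=0$; the relation for $[e_2,e_3]=e_2$ similarly yields $R_{32}=R_{42}=0$. Each of these can be read off one coefficient at a time with no interaction between the constraints.

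The main work lies in the two brackets $[e_1,e_4]=-e_2$ and $[e_2,e_4]=e_1$, because $e_4$ acts on the $(e_1,e_2)$-plane like an infinitesimal rotation and the resulting relations are \emph{coupled}: each bracket contributes one relation tying together $R_{12}$, $R_{21}$, $R_{34}$, and one relation among the diagonal entries. I would write out both brackets and solve the small resulting linear system simultaneously. Adding the two $\eta$-relations eliminates $R_{11}$ and $R_{22}$ and yields $R_{44}=R_{33}$, whence $R_{22}=R_{11}$; combining the two off-diagonal relations forces $R_{21}=-R_{12}$ and $R_{34}=0$. This simultaneous solution, rather than a bracket-by-bracket reading, is the only genuinely non-mechanical step in the argument.

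Finally I would dispose of the vanishing brackets. The bracket $[e_1,e_2]=0$ is satisfied automatically once the first two columns of $D$ lie in $\mathrm{span}(e_1,e_2)$, so it contributes no new condition. The bracket $[e_3,e_4]=0$ supplies the last pair of relations $R_{14}=R_{23}$ and $R_{24}=-R_{13}$. Substituting all the accumulated constraints into (\ref{derivation}) and using $\eta = R_{33}=R_{44}$ collapses the third and fourth rows of $D$ to zero and reproduces the stated matrix, which establishes that $\langle\cdot,\cdot\rangle$ is an algebraic Ricci soliton under exactly these conditions and completes the proof.
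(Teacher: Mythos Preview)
Your proposal is correct and follows essentially the same approach as the paper: impose the Leibniz rule for the matrix $D$ of (\ref{derivation}) on all six bracket pairs, read off the linear constraints componentwise, and solve. Your organization---handling the two $e_3$-brackets first (extracting $\eta=R_{33}$ and the vanishing of the lower-left block), then solving the coupled $e_4$-brackets simultaneously, then the vanishing brackets---is slightly tidier than the paper's sequential order, but the computations and conclusions are identical.
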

\begin{proof}
Given that $D$ in (\ref{derivation}) is a derivation of $\mathfrak{g}_{4.10}$, it satisfies the following condition
\begin{eqnarray*}
& [D(e_1), e_3] + [e_1, D(e_3)] = D[e_1, e_3] \hspace{2.8cm}\\ \Leftrightarrow & \left[ \left( R_{11} - \eta\right)e_1 + R_{21}e_2 + R_{31}e_3 + R_{41}e_4, e_3\right] + \hspace{1.1cm}\\ & \left[ e_1, R_{13}e_1 + R_{23}e_2 + \left(R_{33} - \eta\right)e_3 + R_{34}e_4\right] = D(e_1)\\
\Leftrightarrow& \left( R_{11} - \eta\right)e_1 + R_{21}e_2 + \left( R_{33} - \eta\right)e_1 - R_{43}e_2\hspace{1.2cm}\\
& = \left( R_{11} - \eta\right)e_1 + R_{21}e_2 + R_{31}e_3 + R_{41}e_4\hspace{1.8cm}
\end{eqnarray*}
By identifying each component of $e_i$, we deduce that
$$\left\lbrace\begin{array}{lll}
R_{11} - \eta + R_{33} - \eta = R_{11} - \eta \Longrightarrow \eta = R_{33} \\
\\
R_{21} - R_{43} = R_{21} \Longrightarrow R_{43} = 0\\
\\
R_{31} = R_{41} = 0
\end{array}\right.$$
Similarly, $D$ satisfies the following condition
\begin{eqnarray*}
& [D(e_1), e_4] + [e_1, D(e_4)] = D[e_1, e_4] \hspace{8.02cm}\\ \Leftrightarrow & \left[ \left( R_{11} - \eta\right)e_1 + R_{21}e_2, e_4\right] + \left[ e_1, R_{14}e_1 + R_{24}e_2 + R_{34}e_3 + \left(R_{44} - \eta\right)e_4\right] = - D(e_2)\\
\Leftrightarrow& -\left( R_{11} - \eta\right)e_2 + R_{21}e_1 + R_{34}e_1 - \left( R_{44} - \eta\right)e_2\hspace{6cm}\\
& = -R_{12}e_1 - \left( R_{22} - \eta\right)e_2 - R_{32}e_3 - R_{42}e_4 \hspace{6.8cm}
\end{eqnarray*}
By identifying the components of $e_i$, we obtain that
$$\left\lbrace\begin{array}{lll}
R_{21} + R_{34} = -R_{12} \Longrightarrow R_{34} = -(R_{12} + R_{21})\\
\\
R_{11} - \eta + R_{44} - \eta = R_{22} - \eta \Longrightarrow \eta = R_{11} + R_{44} - R_{22} \\
\\
R_{32} = R_{42} = 0
\end{array}\right.$$
Likewise, we have
\begin{eqnarray*}
& [D(e_2), e_3] + [e_2, D(e_3)] = D[e_2, e_3] \hspace{7.7cm}\\ \Leftrightarrow & \left[ R_{12}e_1 + \left( R_{22} - \eta\right)e_2, e_3\right] + \left[ e_2, R_{13}e_1 + R_{23}e_2 + \left( R_{33} - \eta\right)e_3 + R_{43}e_4\right] = D(e_2)\\
\Leftrightarrow& R_{12}e_1 + \left( R_{22} - \eta\right)e_2 + \left( R_{33} - \eta\right)e_2 + R_{43}e_1 = R_{12}e_1 + \left( R_{22} - \eta\right)e_2\hspace{1.9cm}
\end{eqnarray*}
By examining the components of $e_i$, we find that
$$\left\lbrace\begin{array}{lll}
R_{12} + R_{43} = R_{12} \Longrightarrow R_{43} = 0\\
\\
R_{22} - \eta + R_{33} - \eta = R_{22} - \eta \Longrightarrow \eta = R_{33}
\end{array}\right.$$
Proceeding analogously, we have
\begin{eqnarray*}
& [D(e_2), e_4] + [e_2, D(e_4)] = D[e_2, e_4] \hspace{7.7cm}\\ \Leftrightarrow & \left[ R_{12}e_1 + \left( R_{22} - \eta\right)e_2, e_4\right] + \left[ e_2, R_{14}e_1 + R_{24}e_2 + R_{34}e_3 + \left( R_{44} - \eta\right)e_4\right] = D(e_1)\\
\Leftrightarrow& -R_{12}e_2 + \left( R_{22} - \eta\right)e_1 + R_{34}e_2 + \left( R_{44} - \eta\right)e_1 = \left( R_{11} - \eta\right)e_1 + R_{21}e_2\hspace{1.5cm}
\end{eqnarray*}
Identifying the components of $e_i$ yields
$$\left\lbrace\begin{array}{lll}
R_{22} - \eta + R_{44} - \eta = R_{11} - \eta \Longrightarrow \eta = R_{22} + R_{44} - R_{11}\\
\\
-R_{12} + R_{34} = R_{21} \Longrightarrow R_{34} = R_{12} + R_{21}
\end{array}\right.$$
We remark that $R_{34} = -(R_{12} + R_{21}) = R_{12} + R_{21}$, thus $R_{34} = 0$ and $R_{21} = -R_{12}$. We have $\eta = R_{11} + R_{44} - R_{22} = R_{22} + R_{44} - R_{11}$, hence $2\eta = \eta + \eta = 2R_{44}$. Thus $\eta = R_{44}$, which implies that $R_{11} = R_{22}$. Hence $\eta = R_{44} = R_{33}$ and $R_{11} = R_{22}$. In view of the above facts, the derivation $D$ becomes
$$D = \begin{bmatrix}
R_{11} - R_{33} & R_{12} & R_{13} & R_{14}\\
\\
-R_{12} & R_{11} - R_{33} & R_{23} & R_{24}\\
\\
0 & 0 & 0 & 0\\
\\
0 & 0 & 0 & 0
\end{bmatrix}.$$
Next, we can easily see that the condition $[D(e_1), e_2] + [e_1, D(e_2)] = 0$ is trivial. But we have
\begin{eqnarray*}
& [D(e_3), e_4] + [e_3, D(e_4)] = D[e_3, e_4] = 0\hspace{0.9cm}\\
\Leftrightarrow& \left[ R_{13}e_1 + R_{23}e_2, e_4\right] + \left[ e_3, R_{14}e_1 + R_{24}e_2\right] = 0\\
\Leftrightarrow& -R_{13}e_2 + R_{23}e_1 - R_{14}e_1 - R_{24}e_2 = 0\hspace{1cm}
\end{eqnarray*}
By identifying the components of $e_i$, we obtain that: $R_{14} = R_{23}$ and $R_{24} = -R_{13}$.\\ Thus, the derivation $D$ takes the form
$$D = \begin{bmatrix}
R_{11} - R_{33} & R_{12} & R_{13} & R_{23}\\
\\
-R_{12} & R_{11} - R_{33} & R_{23} & -R_{13}\\
\\
0 & 0 & 0 & 0\\
\\
0 & 0 & 0 & 0
\end{bmatrix}.$$
Therefore, the result of the theorem have been established.
\end{proof}
\begin{example}
We consider the Lie algebra $\mathfrak{g}_{3.1} \oplus \mathfrak{g}_1$, which has a basis $\mathcal{B} = \left\lbrace e_1, e_2, e_3, e_4\right\rbrace$ such that the only nonzero Lie bracket is $[e_2, e_3] = e_1$. Let $\langle \cdot, \cdot\rangle$ be the pseudo-Riemannian inner product on $\mathfrak{g}_{3.1} \oplus \mathfrak{g}_1$ represented in the basis $\mathcal{B}$ by the following matrix
$$\langle \cdot, \cdot\rangle = \begin{bmatrix}
1 & 0 & 0 & 0\\
0 & 0 & 1 & 0\\
0 & 1 & 0 & 0\\
0 & 0 & 0 & -1
\end{bmatrix}.$$
Then, the latter is of signature $(2, 2)$. An orthonormal basis $\mathscr{B} = \left\lbrace v_1, v_2, v_3, v_4\right\rbrace$ of $\left(\mathfrak{g}_{3.1} \oplus \mathfrak{g}_1, \langle \cdot, \cdot\rangle\right)$ is given by
$$v_1 = e_1, \qquad v_2 = \frac{1}{\sqrt{2}}(e_2 + e_3), \qquad v_3 = \frac{1}{\sqrt{2}}(e_2 - e_3), \qquad v_4 = e_4.$$
The only nonzero Lie bracket in the basis $\mathscr{B}$ is $[v_2, v_3] = -v_1$. Thus, the only nonzero structure constants of $\left(\mathfrak{g}_{3.1} \oplus \mathfrak{g}_1, \langle \cdot, \cdot\rangle\right)$ are: $\xi_{231} = -1$ and $\xi_{321} = 1$. Now, by using the formula (\ref{Levi}), we obtain that the Levi-Civita connection of $\left(\mathfrak{g}_{3.1} \oplus \mathfrak{g}_1, \langle \cdot, \cdot\rangle\right)$ is described by the following formulas:
\begin{multicols}{2}
\begin{itemize}
\item $\nabla_{v_i} v_i = 0 \,\,\forall i= 1,\ldots, 4$
\item $\nabla_{v_1} v_2 = \nabla_{v_2} v_1 = \frac{-1}{2}v_3$
\item $\nabla_{v_1} v_3 = \nabla_{v_3} v_1 = \frac{-1}{2}v_2$
\item $\nabla_{v_1} v_4 = \nabla_{v_4} v_1 = 0$
\item $\nabla_{v_2} v_3 = -\nabla_{v_3} v_2 = \frac{-1}{2}v_1$
\item $\nabla_{v_2} v_4 = \nabla_{v_4} v_2 = 0$
\item $\nabla_{v_3} v_4 = \nabla_{v_4} v_3 = 0$
\end{itemize}
\end{multicols}
Next, by using the formula (\ref{Ricci}), we obtain that the Ricci operator of $\left(\mathfrak{g}_{3.1} \oplus \mathfrak{g}_1, \langle \cdot, \cdot\rangle\right)$ is determined by the following identities:
\begin{eqnarray*}
\operatorname{Ric}(v_1) &=& R_{v_2v_1}v_2 - R_{v_3v_1}v_3 - R_{v_4v_1}v_4 = \frac{-1}{2} v_1\\
\operatorname{Ric}(v_2) &=& R_{v_1v_2}v_1 - R_{v_3v_2}v_3 - R_{v_4v_2}v_4 = \frac{1}{2} v_2\\
\operatorname{Ric}(v_3) &=& R_{v_1v_3}v_1 + R_{v_2v_3}v_2 - R_{v_4v_3}v_4 = \frac{1}{2} v_3\\
\operatorname{Ric}(v_4) &=& R_{v_1v_4}v_1 + R_{v_2v_4}v_2 - R_{v_3v_4}v_3 = 0. 
\end{eqnarray*}
Therefore, the matrix of the Ricci operator of $\left(\mathfrak{g}_{3.1} \oplus \mathfrak{g}_1, \langle \cdot, \cdot\rangle\right)$ is given in the basis $\mathscr{B}$ by
$$[\operatorname{Ric}]_{\left\lbrace v_i\right\rbrace} = \operatorname{diag}\left\lbrace \frac{-1}{2}, \frac{1}{2}, \frac{1}{2}, 0\right\rbrace.$$
The transition matrix $A = \left[ \operatorname{id}\right]_{\left\lbrace e_i\right\rbrace, \left\lbrace v_i\right\rbrace}$ is given by
$$A = \begin{bmatrix}
1 & 0 & 0 & 0\\
0 & \frac{1}{\sqrt{2}} & \frac{1}{\sqrt{2}} & 0\\
0 & \frac{1}{\sqrt{2}} & \frac{-1}{\sqrt{2}} & 0\\
0 & 0 & 0 & 1
\end{bmatrix}.$$
We can easily see that $[\operatorname{Ric}]_{\left\lbrace v_i\right\rbrace} \times A = A \times [\operatorname{Ric}]_{\left\lbrace v_i\right\rbrace}$. Thus, the matrix of the Ricci operator of $\left(\mathfrak{g}_{3.1} \oplus \mathfrak{g}_1, \langle \cdot, \cdot\rangle\right)$ is given in the basis $\mathcal{B}$ by
$$[\operatorname{Ric}]_{\left\lbrace e_i\right\rbrace} = A^{-1} \times [\operatorname{Ric}]_{\left\lbrace v_i\right\rbrace} \times A = [\operatorname{Ric}]_{\left\lbrace v_i\right\rbrace}.$$
Furtheremore, we can see that the entries $R_{ij}$ of the matrix $[\operatorname{Ric}]_{\left\lbrace e_i\right\rbrace} = [\operatorname{Ric}]_{\left\lbrace v_i\right\rbrace}$ verify the conditions of theorem \ref{theorem3.3}. Hence the pseudo-Riemannian inner product $\langle \cdot, \cdot\rangle$ is an algebraic Ricci soliton where the constant $\eta$ is given in theorem \ref{theorem3.3} by
$$\eta = R_{22} + R_{33} - R_{11} = \frac{1}{2} + \frac{1}{2} + \frac{1}{2} = \frac{3}{2},$$
and the derivation $D$ is given in theorem \ref{theorem3.3} by
$$D = \begin{bmatrix}
R_{11} - \eta & R_{12} & R_{13} & R_{14}\\
\\
0 & R_{22} - \eta & R_{23} & 0\\
\\
0 & R_{32} & R_{33} - \eta & 0\\
\\
0 & R_{42} & R_{43} & R_{44} - \eta
\end{bmatrix} = \begin{bmatrix}
-2 & 0 & 0 & 0\\
\\
0 & -1 & 0 & 0\\
\\
0 & 0 & -1 & 0\\
\\
0 & 0 & 0 & \frac{-3}{2}
\end{bmatrix}.$$
Consequently: $[\operatorname{Ric}]_{\left\lbrace e_i\right\rbrace} = \operatorname{diag}\left\lbrace \frac{-1}{2}, \frac{1}{2}, \frac{1}{2}, 0\right\rbrace = \frac{3}{2}I_{4} + D$, where $D$ as above is a derivation of $\mathfrak{g}_{3.1} \oplus \mathfrak{g}_1$ with respect to the basis $\mathcal{B} = \left\lbrace e_1, e_2, e_3, e_4\right\rbrace$.
\end{example}
An important class of pseudo-Riemannian metrics on Lie groups, or equivalently pseudo-Riemannian inner products on Lie algebras, is the class of flat metrics. The study of these pseudo-Riemannian metrics on Lie groups is a relatively new and intricate area of research, see for example the following works \cite{boucetta2019flat,lebzioui2020flat,benayadi2022flat,lebzioui2023flat}. Such metrics are trivial algebraic Ricci solitons and trivial Einstein metrics. In the following example, we consider a pseudo-Riemannian inner product of signature $(2,2)$ on the Lie algebra $\mathfrak{g}_{3.1} \oplus \mathfrak{g}_1$ and show that it is flat.
\begin{example}
In the Lie algebra $\mathfrak{g}_{3.1} \oplus \mathfrak{g}_1$ with basis $\mathcal{B} = \left\lbrace e_1, e_2, e_3, e_4\right\rbrace$ such that $[e_2, e_3] = e_1$, let us consider $\langle \cdot, \cdot\rangle$ to be the pseudo-Riemannian inner product represented in the basis $\mathcal{B}$ by the following matrix
$$\langle \cdot, \cdot\rangle = \begin{bmatrix}
0 & 1 & 0 & 0\\
1 & 0 & 0 & 0\\
0 & 0 & 0 & 1\\
0 & 0 & 1 & 0
\end{bmatrix}.$$
Then, the latter is of signature $(2, 2)$. An orthonormal basis $\mathscr{B} = \left\lbrace v_1, v_2, v_3, v_4\right\rbrace$ of $\left(\mathfrak{g}_{3.1} \oplus \mathfrak{g}_1, \langle \cdot, \cdot\rangle\right)$ is given by
$$v_1 = \frac{1}{\sqrt{2}}(e_1 + e_2), \qquad v_2 = \frac{1}{\sqrt{2}}(e_3 + e_4), \qquad v_3 = \frac{1}{\sqrt{2}}(e_1 - e_2), \qquad v_4 = \frac{1}{\sqrt{2}}(e_3 - e_4).$$
The nonzero commutators in the basis $\mathscr{B}$ are given by the following relations
\begin{multicols}{2}
\begin{itemize}
\item $[v_1, v_2] = \frac{1}{2\sqrt{2}}v_1 + \frac{1}{2\sqrt{2}}v_3$
\item $[v_1, v_4] = \frac{1}{2\sqrt{2}}v_1 + \frac{1}{2\sqrt{2}}v_3$
\item $[v_2, v_3] = \frac{1}{2\sqrt{2}}v_1 + \frac{1}{2\sqrt{2}}v_3$
\item $[v_3, v_4] = \frac{-1}{2\sqrt{2}}v_1 - \frac{1}{2\sqrt{2}}v_3$
\end{itemize}
\end{multicols}
Thus, the nonzero structure constants of $\left(\mathfrak{g}_{3.1} \oplus \mathfrak{g}_1, \langle \cdot, \cdot\rangle\right)$ are:
\begin{multicols}{4}
\begin{itemize}
\item $\xi_{121} = \frac{1}{2\sqrt{2}}$
\item $\xi_{211} = \frac{-1}{2\sqrt{2}}$
\item $\xi_{123} = \frac{-1}{2\sqrt{2}}$
\item $\xi_{213} = \frac{1}{2\sqrt{2}}$
\item $\xi_{141} = \frac{1}{2\sqrt{2}}$
\item $\xi_{411} = \frac{-1}{2\sqrt{2}}$
\item $\xi_{143} = \frac{-1}{2\sqrt{2}}$
\item $\xi_{413} = \frac{1}{2\sqrt{2}}$
\item $\xi_{231} = \frac{1}{2\sqrt{2}}$
\item $\xi_{321} = \frac{-1}{2\sqrt{2}}$
\item $\xi_{233} = \frac{-1}{2\sqrt{2}}$
\item $\xi_{323} = \frac{1}{2\sqrt{2}}$
\item $\xi_{341} = \frac{-1}{2\sqrt{2}}$
\item $\xi_{431} = \frac{1}{2\sqrt{2}}$
\item $\xi_{343} = \frac{1}{2\sqrt{2}}$
\item $\xi_{433} = \frac{-1}{2\sqrt{2}}$
\end{itemize}
\end{multicols}
Now, by using the formula (\ref{Levi}), we obtain that the Levi-Civita connection of $\left(\mathfrak{g}_{3.1} \oplus \mathfrak{g}_1, \langle \cdot, \cdot\rangle\right)$ is described by the following identities:
\begin{multicols}{2}
\begin{itemize}
\item $\nabla_{v_1} v_1 = \frac{-1}{2\sqrt{2}}v_2 + \frac{1}{2\sqrt{2}}v_4$
\item $\nabla_{v_3} v_3 = \frac{-1}{2\sqrt{2}}v_2 + \frac{1}{2\sqrt{2}}v_4$
\item $\nabla_{v_2} v_2 = \nabla_{v_4} v_4 = 0$
\item $\nabla_{v_1} v_2 = \frac{1}{2\sqrt{2}}v_1 + \frac{1}{2\sqrt{2}}v_3$
\item $\nabla_{v_2} v_1 = 0$
\item $\nabla_{v_1} v_3 = \frac{1}{2\sqrt{2}}v_2 - \frac{1}{2\sqrt{2}}v_4$
\item $\nabla_{v_3} v_1 = \frac{1}{2\sqrt{2}}v_2 - \frac{1}{2\sqrt{2}}v_4$
\item $\nabla_{v_1} v_4 = \frac{1}{2\sqrt{2}}v_1 + \frac{1}{2\sqrt{2}}v_3$
\item $\nabla_{v_4} v_1 = \nabla_{v_2} v_3 = 0$
\item $\nabla_{v_3} v_2 = \frac{-1}{2\sqrt{2}}v_1 - \frac{1}{2\sqrt{2}}v_3$
\item $\nabla_{v_2} v_4 = \nabla_{v_4} v_2 = 0$
\item $\nabla_{v_3} v_4 = \frac{-1}{2\sqrt{2}}v_1 - \frac{1}{2\sqrt{2}}v_3$
\item $\nabla_{v_4} v_3 = 0$
\end{itemize}
\end{multicols}
After very careful and exacting calculations, we obtain the following result
$$R_{v_1v_2} = R_{v_1v_3} = R_{v_1v_4} = R_{v_2v_3} = R_{v_2v_4} = R_{v_3v_4} = 0.$$
For example, we have
\begin{eqnarray*}
R_{v_1v_2} v_1 &=& \nabla_{[v_1, v_2]} v_1 - \nabla_{v_1}\nabla_{v_2} v_1 + \nabla_{v_2}\nabla_{v_1} v_1\\
&=& \frac{1}{2\sqrt{2}}\nabla_{v_1} v_1 + \frac{1}{2\sqrt{2}}\nabla_{v_3} v_1 - \frac{1}{2\sqrt{2}}\nabla_{v_2} v_2 + \frac{1}{2\sqrt{2}}\nabla_{v_2} v_4\\
&=& 0.
\end{eqnarray*}
Similarly, one sees that 
$$R_{v_1v_2} v_2 = R_{v_1v_2} v_3 = R_{v_1v_2} v_4 = 0.$$
Thus $R_{v_1v_2} = 0$, and the same strategy applies to the other transformations. Next, by skew-symmetry, $R_{v_iv_j} = - R_{v_jv_i}$, so all the remaining transformations vanish. Consequently, the Riemann tensor $R$ vanishes, and the pseudo-Riemannian inner product $\langle \cdot, \cdot\rangle$ is flat.
\end{example}

\end{document}